\numberwithin{equation}{section}
\DeclareMathOperator{\supp}{supp}
\DeclareMathOperator{\Ind}{Ind}
\DeclareMathOperator{\op}{op}
\DeclareMathOperator{\ind}{index}
\DeclareMathOperator{\Hom}{Hom}
\DeclareMathOperator{\End}{End}
\DeclareMathOperator{\U}{U}
\DeclareMathOperator{\Spin}{Spin}
\DeclareMathOperator{\Tr}{Tr}
\DeclareMathOperator{\tr}{tr}
\DeclareMathOperator{\str}{str}
\DeclareMathOperator{\Str}{Str}
\newcommand{\beq}[1]{\begin{equation} \label{#1}}
\newcommand{\eeq}{\end{equation}}
\newcommand{\bea}{\begin{eqnarray}}
\newcommand{\eea}{\end{eqnarray}}
\begin{document}

\theoremstyle{plain}
\newtheorem{theorem}{Theorem}[section]
\newtheorem{thm}{Theorem}[section]
\newtheorem{assumption}[theorem]{Assumption}
\newtheorem{lemma}[theorem]{Lemma}
\newtheorem{proposition}[theorem]{Proposition}
\newtheorem{prop}[theorem]{Proposition}
\newtheorem{corollary}[theorem]{Corollary}
\newtheorem{conjecture}[theorem]{Conjecture}
\newtheorem{question}[theorem]{Question}

\theoremstyle{definition}
\newtheorem{convention}[theorem]{Convention}
\newtheorem{definition}[theorem]{Definition}
\newtheorem{defn}[theorem]{Definition}
\newtheorem{example}[theorem]{Example}
\newtheorem{remark}[theorem]{Remark}
\newtheorem*{remark*}{Remark}
\newtheorem*{overview*}{Overview}
\newtheorem*{results*}{Results}
\newtheorem{rem}[theorem]{Remark}

\newcommand{\C}{\mathbb{C}}
\newcommand{\R}{\mathbb{R}}
\newcommand{\Z}{\mathbb{Z}}
\newcommand{\N}{\mathbb{N}}
\newcommand{\Q}{\mathbb{Q}}

\newcommand{\Supp}{{\rm Supp}}

\newcommand{\field}[1]{\mathbb{#1}}
\newcommand{\bZ}{\field{Z}}
\newcommand{\bR}{\field{R}}
\newcommand{\bC}{\field{C}}
\newcommand{\bN}{\field{N}}
\newcommand{\bT}{\field{T}}
\newcommand{\cB}{{\mathcal{B} }}
\newcommand{\cK}{{\mathcal{K} }}
\newcommand{\cF}{{\mathcal{F} }}
\newcommand{\cDD}{{\mathcal{D} }}
\newcommand{\cO}{{\mathcal{O} }}
\newcommand{\cE}{\mathcal{E}}
\newcommand{\cS}{\mathcal{S}}
\newcommand{\cN}{\mathcal{N}}
\newcommand{\cl}{\mathcal{L}}
\newcommand{\sL}{\mathscr{L}}
\newcommand{\sS}{\mathscr{S}}
\newcommand{\sA}{\mathscr{A}}
\newcommand{\sB}{\mathscr{B}}
\newcommand{\sR}{\mathscr{R}}

\newcommand{\KK}{K \! K}
\newcommand{\norm}[1]{\| #1\|}
\newcommand{\Spinc}{\Spin^c}
\newcommand{\spartial}{\slashed{\partial}}
\newcommand{\HH}{{\mathcal{H} }}
\newcommand{\Hpi}{\HH_{\pi}}

\newcommand{\DNR}{D_{N \times \R}}


\def\kt{\mathfrak{t}}
\def\kk{\mathfrak{k}}
\def\kp{\mathfrak{p}}
\def\kg{\mathfrak{g}}
\def\kh{\mathfrak{h}}
\def\so{\mathfrak{so}}
\def\cut{c}

\newcommand{\ddt}{\left. \frac{d}{dt}\right|_{t=0}}
\newcommand{\todoa}[1]{\color{red}\textbf{TO DO$^A$: }{#1 }\color{black}}
\newcommand{\todob}[1]{\color{teal}\textbf{TO DO$^B$: }{#1 }\color{black}}
\newcommand{\todoc}[1]{\color{olive}\textbf{TO DO$^C$: }{#1 }\color{black}}
\newcommand{\todod}[1]{\color{blue}\textbf{TO DO$^C$: }{#1 }\color{black}}
\newcommand{\todoe}[1]{\color{green}\textbf{TO DO$^C$: }{#1 }\color{black}}

\newcommand{\esta}{\color{red}\textbf{estimate $1$}\color{black}}
\newcommand{\estb}{\color{teal}\textbf{estimate $2$}\color{black}}
\newcommand{\estc}{\color{olive}\textbf{estimate $3$}\color{black}}
\newcommand{\estd}{\color{blue}\textbf{estimate $4$}\color{black}}
\newcommand{\este}{\color{green}\textbf{estimate $5$}\color{black}}

\title[Higher localised $\widehat{A}$-genera for proper actions and applications]{Higher localised $\widehat{A}$-genera for proper actions and applications}

\author{Hao Guo}
\address[Hao Guo]{Yau Mathematical Sciences Center, Tsinghua University}
\email{haoguo@mail.tsinghua.edu.cn}
\author{Varghese Mathai}
\address[Varghese Mathai]{School of Mathematical Sciences, University of Adelaide}
\email{mathai.varghese@adelaide.edu.au}

\thanks{H.G. and V.M. were partially supported by funding from the Australian Research Council, through the Discovery Project Grant DP200100729. V.M. was supported by the Australian Research Council, through the Australian Laureate Fellowship FL170100020. H.G. was partially supported by NSF DMS-2000082. The authors thank Hang Wang, Zhizhang Xie, and Guoliang Yu for their helpful comments. H.G. is grateful for useful feedback from the audience at the 2021 NUS Conference on Index Theory and Related Topics, August 21-25, 2021}

\subjclass[2010]{46L80, 58B34, 53C20}
\keywords{Higher index, positive scalar curvature, fixed point theorem, coarse geometry, twisted Roe algebra, quantitative index}

\maketitle

\begin{abstract}
For a finitely generated discrete group $\Gamma$ acting properly on a spin manifold $M$, we formulate new topological obstructions to $\Gamma$-invariant metrics of positive scalar curvature on $M$ that take into account the cohomology of the classifying space $\underline{B}\Gamma$ for proper actions. 

In the cocompact case, this leads to a natural generalisation of Gromov-Lawson's notion of higher $\widehat{A}$-genera to the setting of proper actions by groups with torsion. It is conjectured that these invariants obstruct the existence of $\Gamma$-invariant positive scalar curvature on $M$. For classes arising from the subring of $H^*(\underline{B}\Gamma,\R)$ generated by elements of degree at most $2$, we are able to prove this, under suitable assumptions, using index-theoretic methods for projectively invariant Dirac operators and a twisted $L^2$-Lefschetz fixed-point theorem involving a weighted trace on conjugacy classes. The latter generalises a result of Wang-Wang \cite{Wang-Wang} to the projective setting. In the special case of free actions and the trivial conjugacy class, this reduces to a theorem of Mathai \cite{MathaiTwisted}, which provided a partial answer to a conjecture of Gromov-Lawson on higher $\widehat{A}$-genera.

If $M$ is non-cocompact, we obtain obstructions to $M$ being a partitioning hypersurface inside a non-cocompact $\Gamma$-manifold with non-negative scalar curvature that is positive in a neighbourhood of the hypersurface. Finally, we define a quantitative version of the twisted higher index, as first introduced in \cite{GXY3}, and use it to prove a parameterised vanishing theorem in terms of the lower bound of the total curvature term in the square of the twisted Dirac operator.

\end{abstract}
\vspace{1cm}
\tableofcontents

\section{Introduction}
\label{sec intro}
In this paper we develop the theory of higher indices of projectively equivariant Dirac operators with respect to proper actions of discrete groups, and relate this to numerical invariants that are computable in terms of characteristic classes. In the spin setting, these results can be applied to give new obstructions to metrics of positive scalar curvature that are invariant under the proper action of a discrete group. This generalises the obstructions provided by the higher $\widehat{A}$-genera of Gromov and Lawson \cite{Gromov-Lawson2} to the setting of proper actions.

On the operator-algebraic side, where the higher index resides, we consider appropriately weighted traces on twisted group $C^*$-algebras. These traces generalise the traces associated to conjugacy classes in the setting of ordinary group $C^*$-algebras by taking into account a $\mathrm{U}(1)$-valued group $2$-cocycle. On the geometric side, there is a corresponding algebra of projectively invariant operators of an appropriate trace class, in which the heat operator associated to a projectively invariant Dirac operator lies. The traces on these two sides are related through a weighted $L^2$-Lefschetz fixed-point theorem that generalises a result of Wang-Wang \cite[Theorem 6.1]{Wang-Wang}.

The special case where the group $\Gamma$ is torsion free and was considered in \cite{CGM, MathaiTwisted}, and led to a proof that the higher $\widehat{A}$-genera of $M$ arising from the subring of $H^*(B\Gamma,\R)$ generated by elements of degree at most $2$ are obstructions to positive scalar curvature on $M/\Gamma$. This gave a partial answer to a conjecture of Gromov-Lawson \cite[Conjecture]{Gromov-Lawson2}; see also \cite[Conjecture 2.1]{Rosenberg3}.

Now let us turn to the general case where an arbitrary discrete group $\Gamma$, possibly with torsion, acts properly on a manifold $M$. Denote by $\underline{B}\Gamma$ the classifying space for proper actions \cite{BCH}, and let
$$f\colon M/\Gamma\to\underline{B}\Gamma$$
be the classifying map for the action of $\Gamma$ on $M$. For any integer $m\geq 0$, let $\alpha\in Z^m(\underline{B}\Gamma,\mathbb{R})$. Let 
\begin{equation}
\label{eq omega}
\omega\in\Omega^m(M)	
\end{equation}
be the $\Gamma$-invariant lift of a differential form on the orbifold $M/\Gamma$ belonging to the class $f^*[\alpha]$ in the orbifold de Rham cohomology group $H^m_{\textnormal{dR}}(M/\Gamma)$. (For background on orbifold de Rham cohomology, see for example \cite[chapter 2]{AdemLeidaRuan}.)

If the quotient $M/\Gamma$ is compact, then for each $g\in\Gamma$, the centraliser $Z^g$ of $g$ acts properly and cocompactly on the fixed-point submanifold $M^g$. Let $c^g$ be a cut-off function for this action, i.e. $c$ is non-negative and satisfies
$$\sum_{k\in Z^g}c^g(k^{-1}x)=1$$
for any $x\in M^g$. We define the \emph{higher localised $\widehat{A}$-genus of $M$} with respect to $\omega$ to be
\begin{equation}
\label{eq higher localised A hat}
\widehat A_{g}(M,\omega)\coloneqq\int_{M^g}c^g\cdot\frac{\widehat A(M^g)\cdot\omega|_{M^g}}{\det(1-g e^{-R^{\mathcal{N}}/2\pi i})^{1/2}},
\end{equation}
where $R^{\cN}$ is the curvature of the Levi--Civita connection restricted to the normal bundle $\cN$ of $M^g$ in $M$ with respect to a $\Gamma$-invariant Riemannian metric on $M$.

We are led to the following conjecture:\\


\begin{conjecture}
\label{conj}
	If a proper, cocompact, connected $\Gamma$-spin manifold $M$ admits a $\Gamma$-invariant Riemannian metric of positive scalar curvature, then all of the higher localised $\widehat{A}$-genera of $M$ vanish. That is, if $\omega$ is any closed $m$-form constructed as above, then for all $g\in\Gamma$, we have
	$$\widehat A_{g}(M,\omega)=0.$$
\end{conjecture}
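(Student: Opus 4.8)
The plan is to realise the higher localised $\widehat A$-genus $\widehat A_g(M,\omega)$ as the value of a weighted trace, indexed by the conjugacy class of $g$, on a higher index class of the $\Gamma$-equivariant spin Dirac operator $\DD_M$, and then to show that a $\Gamma$-invariant metric of positive scalar curvature forces that index class --- and hence the trace --- to vanish. Concretely, one first attaches to the class $f^*[\alpha]\in H^m_{\mathrm{dR}}(M/\Gamma)$ a geometric datum: for $m\le 2$ this is a $\Gamma$-invariant Hermitian line bundle with connection whose curvature represents $\omega$, equivalently a $\mathrm{U}(1)$-valued multiplier $\sigma$ on $\Gamma$ together with a projective representation, as in the constructions of Mathai \cite{MathaiTwisted} and Connes--Gromov--Moscovici \cite{CGM}. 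Twisting $\DD_M$ by this datum produces a projectively $\Gamma$-invariant Dirac operator $\DD_M^\sigma$, whose heat semigroup lies in the algebra of projectively invariant operators of the relevant trace class, and one defines the twisted higher index $\operatorname{Ind}_\sigma(\DD_M^\sigma)$ in the $K$-theory of the twisted group $C^*$-algebra $C^*_r(\Gamma,\sigma)$.

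Second, one evaluates the weighted trace $\tau_{[g],\sigma}$ associated to the conjugacy class of $g$ on this index class. By a twisted $L^2$-Lefschetz fixed-point theorem --- the generalisation to the projective setting of \cite[Theorem 6.1]{Wang-Wang} established in this paper --- the number $\tau_{[g],\sigma}\bigl(\operatorname{Ind}_\sigma(\DD_M^\sigma)\bigr)$ is computed by a localised integral over the fixed-point set $M^g$ whose integrand is precisely $c^g\cdot\widehat A(M^g)\cdot\omega|_{M^g}/\det(1-g\, e^{-R^{\mathcal N}/2\pi i})^{1/2}$, so it equals $\widehat A_g(M,\omega)$ (for classes that are products of degree $\le 2$ generators one twists by the corresponding tensor products and extracts the relevant localised genus from the resulting combination of weighted traces). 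Third, when $M$ carries a $\Gamma$-invariant metric of positive scalar curvature, the Lichnerowicz formula gives $(\DD_M^\sigma)^2 = \nabla^*\nabla + \tfrac14\,\mathrm{scal} + \mathcal R^\sigma$, where the curvature term $\mathcal R^\sigma$ coming from the twisting datum is a bounded zeroth-order operator; rescaling the metric makes $(\DD_M^\sigma)^2$ uniformly positive, hence $\DD_M^\sigma$ is invertible in the appropriate Hilbert-module sense, $\operatorname{Ind}_\sigma(\DD_M^\sigma)=0$, and therefore $\widehat A_g(M,\omega)=0$. This argument goes through whenever $\omega$ admits a geometric realisation of the above kind, which is the case for the subring of $H^*(\underline B\Gamma,\R)$ generated by classes of degree $\le 2$.

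The main obstacle to the conjecture in full is the passage from degree $\le 2$ to arbitrary $m$. A closed $m$-form with $m\ge 3$ representing a class pulled back from $\underline B\Gamma$ is not in general the curvature of a line bundle, nor the Chern character of a finite-dimensional projective bundle, so there is no immediate twisting datum to feed into a Dirac operator; the natural candidate is a bundle gerbe (or higher gerbe) with connection and curving whose Dixmier--Douady-type class is $[\omega]$, but an index theory --- and in particular a Lefschetz fixed-point theorem --- for Dirac operators coupled to such higher data in the proper, non-cocompact setting is not available. Even granting such a theory, one expects the vanishing of the resulting higher index to be equivalent to a strong-Novikov-type injectivity statement for the assembly map of the proper action with coefficients, which is itself open for general $\Gamma$. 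A more tractable intermediate target, paralleling the torsion-free case, is to prove the conjecture for all classes in the image of a Connes--Gromov--Moscovici ``almost flat'' construction, i.e. for groups and classes for which one can build genuine finite-dimensional $\Gamma$-equivariant bundles that are projectively flat up to arbitrarily small curvature; the quantitative twisted index developed later in the paper (following \cite{GXY3}) is designed precisely to absorb the resulting curvature error terms, and extending that circle of ideas to higher degree is the natural route toward the general statement.
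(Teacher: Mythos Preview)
The statement you are attempting is a \emph{conjecture}: the paper explicitly does not prove it in full generality, so there is no ``paper's own proof'' to compare against. What the paper does prove is the special case in Theorem~\ref{thm main1}~(ii) and Corollary~\ref{cor}, namely vanishing of $\widehat A_g(M,\omega)$ when $\omega$ comes from the subring generated by $f^*H^1(\underline B\Gamma,\R)\cup f^*H^2(\underline B\Gamma,\R)$, under the extra hypotheses that $g$ is $\alpha$-regular, $(g)$ has polynomial growth, and $M^g$ is connected. Your proposal correctly identifies this as the accessible case and correctly names the main obstruction to the general $m$ (no geometric twisting datum available beyond degree $2$).

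For the degree $\le 2$ case your outline is broadly right but diverges from the paper in two substantive ways. First, you write that ``rescaling the metric makes $(\DD_M^\sigma)^2$ uniformly positive''. The paper does not rescale the metric; it instead introduces a one-parameter family of connections $\nabla^{\sL,s}=d+is\eta$ and associated operators $D^s$, so that the curvature term in the Lichnerowicz formula becomes $isc(\omega)$, which is dominated by $\kappa/4$ for $|s|$ small (Proposition~\ref{prop small s}, Corollary~\ref{cor vanishing higher}). This matters because the second step --- which you omit --- is the polynomial argument: $(\tau^{(g)}_{\sigma^s})_*\Ind_{\Gamma,\sigma^s}(D^s)$ is shown to be a polynomial in $s$ whose coefficients are the integrals $\widehat A_g(M,\omega^k)$, and its vanishing on an interval $(0,\delta)$ forces each coefficient to vanish separately. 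A single invertible twisted operator would only give you one linear combination of these integrals, not each $\widehat A_g(M,\omega^k)$ individually. Second, you do not mention the hypotheses that make the fixed-point formula yield exactly $\widehat A_g(M,\omega)$: the paper needs $M^g$ connected (so the phase $e^{-is\phi_g}$ can be normalised to $1$; see Remark~\ref{rem fixed points}), $g$ $\alpha$-regular (so the weighted trace $\tau^{(g)}_{\sigma^s}$ exists for all $s$; see Corollary~\ref{cor trace any s}), and $(g)$ of polynomial growth (so the trace extends from the twisted group algebra to the $C^*$-algebra; Lemma~\ref{lem poly growth}). Without these, the right-hand side of the fixed-point formula is the more complicated sum over components in Theorem~\ref{thm main1}~(i), not the clean $\widehat A_g(M,\omega)$.

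Your closing remarks about the quantitative twisted index are somewhat off target: in this paper that tool (section~\ref{sec quantitative obstr}) is still only applied to line-bundle twists (degree $2$), in the non-cocompact setting, and is not presented as a route to higher-degree $\omega$.
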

\vspace{0.3cm}
\begin{remark}
We give evidence for the validity of this conjecture in Theorem \ref{thm main1} and Corollary \ref{cor} by proving it for all $\omega$ arising from the cohomology ring generated by $f^*H^1(\underline{B}\Gamma,\R)\cup f^*H^2(\underline{B}\Gamma,\R)$, under the assumption that $M^g$ is connected and $g$ is a regular element for the relevant group multiplier on $\Gamma$ (see subsection \ref{subsec multipliers}). In view of Remark \ref{rem Moscovici-Wu}, it seems likely that, in the special case of such $\omega$ and regular $g$, Conjecture \ref{conj} is implied by the Baum-Connes conjecture, without any growth conditions on $(g)$.
\end{remark}
\begin{remark}
Conjecture \ref{conj} is a special case of a possible generalisation of the Gromov-Lawson-Rosenberg conjecture in $KO$-theory \cite{Rosenberg3}. We will not discuss this here, since our results are not in this direction.
\end{remark}

The following is an immediate corollary to Conjecture \ref{conj}:
\begin{corollary}
\label{cor conj}
Suppose $\underline{E}\Gamma$ is a proper, cocompact $\Gamma$-spin manifold. Then there is no $\Gamma$-invariant metric of positive scalar curvature on $\underline{E}\Gamma$.
\end{corollary}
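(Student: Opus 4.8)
The plan is to deduce the corollary directly from Conjecture~\ref{conj}: I would show that for $M=\underline{E}\Gamma$ there is already a single pair $(g,\omega)$ with $\widehat{A}_g(M,\omega)\neq 0$, so that, by the contrapositive of the conjecture, no $\Gamma$-invariant metric of positive scalar curvature can exist on $M$. The pair to use is $g=e$, the identity element, together with $\omega$ a $\Gamma$-invariant lift of a top-degree differential form on the quotient orbifold.

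First I would set up the geometry. Write $n=\dim M$ and let $\pi\colon M\to M/\Gamma=\underline{B}\Gamma$ be the quotient map, so that the classifying map $f$ may be taken to be the identity. Taking the finite subgroup $H=\{e\}$ in the defining property of $\underline{E}\Gamma$ shows that $M=\underline{E}\Gamma^{\{e\}}$ is contractible, in particular connected, so the hypotheses of Conjecture~\ref{conj} are satisfied. Since $\Gamma$ acts properly and cocompactly and $M$ is $\Gamma$-spin — hence oriented, with $\Gamma$ acting by orientation-preserving diffeomorphisms — the quotient $\underline{B}\Gamma$ is a closed, connected, oriented $n$-dimensional orbifold. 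By orbifold Poincaré duality (or simply by Stokes' theorem, since the orbifold volume form of any $\Gamma$-invariant metric is closed and has strictly positive integral, hence is not exact), there is a closed $n$-form $\bar\omega$ on $\underline{B}\Gamma$ with $\int_{\underline{B}\Gamma}\bar\omega\neq 0$; its de Rham class equals $f^*[\alpha]$ for a suitable cocycle $\alpha\in Z^n(\underline{B}\Gamma,\R)$, so the $\Gamma$-invariant lift $\omega\coloneqq\pi^*\bar\omega\in\Omega^n(M)$ is an admissible form in the sense of the introduction.

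Next I would evaluate $\widehat{A}_e(M,\omega)$ using \eqref{eq higher localised A hat}. For $g=e$ we have $M^g=M$, the normal bundle $\cN$ has rank zero, and the denominator $\det\bigl(1-g\,e^{-R^{\cN}/2\pi i}\bigr)^{1/2}$ is the determinant of an operator on the zero vector space, hence equals $1$. Moreover $\widehat{A}(M)$ is a sum of forms of degrees $0,4,8,\dots$, so in the $n$-form $\widehat{A}(M)\wedge\omega$ only the degree-zero component of $\widehat{A}(M)$, which is $1$, can multiply the top-degree form $\omega$ nontrivially. Hence $\widehat{A}_e(M,\omega)=\int_M c^e\cdot\omega$, where $c^e$ is a compactly supported cut-off function for the $\Gamma$-action. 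The defining identity $\sum_{\gamma\in\Gamma}c^e(\gamma^{-1}x)=1$ then yields the standard unfolding identity $\int_M c^e\cdot\pi^*\bar\omega=\int_{\underline{B}\Gamma}\bar\omega$, so
\[
\widehat{A}_e(M,\omega)=\int_{\underline{B}\Gamma}\bar\omega\neq 0.
\]
Were $\underline{E}\Gamma$ to admit a $\Gamma$-invariant metric of positive scalar curvature, Conjecture~\ref{conj} would force $\widehat{A}_g(M,\cdot)$ to vanish for every $g\in\Gamma$, in particular for $g=e$ and the $\omega$ above; this contradiction completes the argument.

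There is no genuinely hard step in this deduction — it is essentially a matter of choosing $g$ and $\omega$ correctly — which is why the statement is recorded as an immediate corollary. The closest thing to an obstacle is assembling the standard background facts: that $\underline{B}\Gamma$ is a closed oriented orbifold with nonvanishing top de Rham cohomology, that the rank-zero normal bundle makes the Lefschetz denominator equal to $1$, and that integration against the cut-off function recovers the orbifold integral over $\underline{B}\Gamma$. None of these involves any analysis.
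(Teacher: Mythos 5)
Your proposal is correct and follows essentially the same route as the paper: take $g=e$ and $\omega$ a $\Gamma$-invariant top-degree form (the paper uses the volume form of the putative metric itself), observe that the localised $\widehat{A}$-genus reduces to $\int_M c\,\omega\neq 0$, and contradict Conjecture~\ref{conj}. You simply fill in more of the routine justifications (triviality of the Lefschetz denominator for $g=e$, the degree count for $\widehat{A}(M)\wedge\omega$, and the unfolding identity), which the paper leaves implicit.
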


Indeed, suppose such a Riemannian metric existed. Letting $\omega$ be the associated volume form and $g=e$, the quantity \eqref{eq higher localised A hat} is $\widehat{A}_e(\underline{E}\Gamma,\omega)=\int_M c\omega$, where $c$ is a cut-off function for the $\Gamma$-action on $M$. This is positive, which contradicts Conjecture \ref{conj}.


We give an index-theoretic approach to a special case of Conjecture \ref{conj} as follows. From the  data above, we construct a twisted Dirac operator that is invariant under a projective representation of $\Gamma$. We show that the integral \eqref{eq higher localised A hat} arises naturally as certain weighted traces of the associated heat operator, and relate this to the higher index of the twisted Dirac operator via a weighted trace map on the twisted group algebra. We prove:

\begin{theorem}
\label{thm main1}
	Let $\Gamma$ be a finitely generated group, and let $M$ be a connected, $\Gamma$-equivariantly spin manifold such that $M/\Gamma$ is compact and $H^1(M)=0$. Let $\omega\in\Omega^2(M)$ be as in \eqref{eq omega}, and let $D$ be the associated projectively invariant Dirac operator on $M$ from Definition \ref{def projective Dirac}. Let $\alpha$ and $\sigma$ be the multipliers constructed in subsection \ref{subsec multipliers}, and suppose that $g\in\Gamma$ is an $\alpha$-regular element, in the sense of \eqref{eq alpha regular}, whose conjugacy class $(g)$ has polynomial growth.
%
%
	\begin{enumerate}[leftmargin=0.29in, label=(\roman*)]
	\item Suppose $M$ is even-dimensional. Then
	$$\big(\tau^{(g)}_\sigma\big)_*\Ind_{\Gamma,\sigma}(D)=\sum_{j=1}^m\int_{M^g_j}e^{-i\phi_g(x_j)}c^g\cdot\frac{\widehat A(M^g_j)\cdot e^{-\omega/2\pi i}|_{M^g_j}}{\det(1-g e^{-R^{\mathcal{N}_j}/2\pi i})^{1/2}}$$
	where $\Ind_{\Gamma,\sigma}(D)\in K_0(C^*_r(\Gamma,\sigma))$ be the $(\Gamma,\sigma)$-invariant higher index of $D$ from Definition \ref{def higher index}, $\big(\tau^{(g)}_\sigma\big)_*$ is the homomorphism \eqref{eq tau induced}, $M^g_1,\ldots,M^g_m$ are the connected components of $M^g$ that intersect the support of a cut-off function $c^g$ for the action of $Z^g$ on $M^g$, $\phi_g$ is as in \eqref{eq derivative of phi}, $x_j$ is an arbitrary point in $M^g_j$, and $R^{\cN_j}$ is the curvature of the Levi-Civita connection restricted to the normal bundle $\cN_j$ of $M^g_j$ in $M$ and with respect to a $\Gamma$-invariant Riemannian metric on $M$.
	\item If $M$ admits a $\Gamma$-invariant Riemannian metric of positive scalar curvature, then for each integer $k\geq 0$, we have
	\begin{equation}
\sum_{j=1}^m\int_{M^g_j}c^g\cdot\frac{\widehat A(M^g)\cdot\left(\phi_g(x_j)-\frac{\omega}{2\pi}\right)^k|_{M^g_j}}{\det(1-g e^{-R^{\mathcal{N}_j}/2\pi i})^{1/2}}=0,
\end{equation}
where the notation is as explained in part (i).
If $M^g$ is connected, then
$$\widehat{A}_g(M,\omega^k)=0$$
for each $k\geq 0$.\\

	\end{enumerate}
\end{theorem}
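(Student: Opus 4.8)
The plan is to establish part (i) as a $\sigma$-twisted weighted $L^2$-Lefschetz fixed-point theorem for the projectively invariant operator $D$ of Definition \ref{def projective Dirac}, adapting the heat-kernel method of Wang--Wang \cite{Wang-Wang} to the presence of the multiplier. First I would note that the heat semigroup $e^{-tD^2}$ has a smooth Schwartz kernel $k_t$ that is $(\Gamma,\sigma)$-invariant, and that, by cocompactness of $M/\Gamma$ together with the Gaussian off-diagonal bound $|k_t(x,y)|\le C_T\,t^{-n/2}e^{-d(x,y)^2/5t}$ uniform for $t\in(0,T]$, the operator $e^{-tD^2}$ lies in a smooth $(g)$-summable subalgebra of the $\sigma$-twisted Roe-type algebra on which the weighted trace $\tau^{(g)}_\sigma$ (the sum over the conjugacy class $(g)$ with the $\sigma$-weighting) converges; this is precisely where the polynomial-growth hypothesis on $(g)$ enters, as in \cite{Wang-Wang,CGM}. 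A McKean--Singer argument then shows $\Str_{\tau^{(g)}_\sigma}(e^{-tD^2})$ is independent of $t$, and the standard identification of the higher index with a heat-idempotent in this smooth subalgebra gives $\Str_{\tau^{(g)}_\sigma}(e^{-tD^2})=(\tau^{(g)}_\sigma)_*\Ind_{\Gamma,\sigma}(D)$ for every $t>0$.

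Next I would compute the limit as $t\to 0^+$. Unfolding the conjugacy-class sum against the cut-off $c^g$ rewrites $\Str_{\tau^{(g)}_\sigma}(e^{-tD^2})$ as the integral over $M$ of the fibrewise super-trace of the twisted kernel $x\mapsto g\cdot k_t(g^{-1}x,x)$, weighted by $c^g$ and the multiplier phase. By the Gaussian bound, points at positive distance from the fixed-point set $M^g$ contribute negligibly as $t\to 0^+$, so the limit localises to a tubular neighbourhood of $M^g$; on each component $M^g_j$, a Getzler rescaling / equivariant local index computation should produce the factor $\widehat A(M^g_j)$ from the tangential directions, the factor $\det(1-g\,e^{-R^{\cN_j}/2\pi i})^{-1/2}$ from the transverse Mehler kernel along the normal bundle $\cN_j$, the Chern-character factor $e^{-\omega/2\pi i}|_{M^g_j}$ from the curvature (proportional to $\omega$) of the twisting data in Definition \ref{def projective Dirac}, and the scalar phase $e^{-i\phi_g(x_j)}$, constant along $M^g_j$ because $\phi_g$ is locally constant on $M^g$ by \eqref{eq derivative of phi}. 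Summing over $j$ yields the formula in part (i). I expect the main obstacle to be exactly this localisation: one must track how the $\sigma$-twisting interacts with the rescaling so as to isolate precisely the phase $e^{-i\phi_g(x_j)}$ — the one genuinely new feature compared with \cite{Wang-Wang} — while the $(g)$-summability estimates of the first paragraph are the other delicate point.

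For part (ii), I would first treat the case where $M$ is even-dimensional and fix a $\Gamma$-invariant metric $h$ with scalar curvature $\kappa_h\ge\kappa_0>0$ (using that $M/\Gamma$ is compact). Introduce a real parameter $s$ and rescale the defining form $\omega\mapsto s\omega$; this produces multipliers $\sigma_s$ with $\sigma_1=\sigma$ (still regular at $g$, since rescaling preserves the condition \eqref{eq alpha regular}) and projectively invariant Dirac operators $D_s$ whose Lichnerowicz formula reads $D_s^2=\nabla_s^*\nabla_s+\tfrac{\kappa_h}{4}+s\,\mathfrak c(\omega)$ with $\mathfrak c(\omega)$ a fixed bounded bundle endomorphism. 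Hence for $|s|$ small, $D_s^2\ge\tfrac{\kappa_0}{4}-|s|\,\|\mathfrak c(\omega)\|_\infty>0$, so $D_s$ is invertible and $\Ind_{\Gamma,\sigma_s}(D_s)=0$; feeding this into part (i) with $s\omega$ in place of $\omega$ (note $\phi_g$ scales linearly, so the relevant phase becomes $s\phi_g$) gives, for all sufficiently small $|s|$,
\begin{equation*}
0=\sum_{j=1}^m\int_{M^g_j}c^g\cdot\frac{\widehat A(M^g_j)\cdot\exp\!\big(\!-is(\phi_g(x_j)-\tfrac{\omega}{2\pi})\big)\big|_{M^g_j}}{\det\big(1-g\,e^{-R^{\cN_j}/2\pi i}\big)^{1/2}}.
\end{equation*}
The right-hand side is an entire function of $s$ (the form-valued exponential is a finite sum), so every Taylor coefficient vanishes; the coefficient of $s^k$ equals $\tfrac{(-i)^k}{k!}$ times the expression displayed in part (ii), which gives the first assertion. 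For $M$ odd-dimensional I would pass to $M\times S^1$ with a product PSC metric and $\Gamma$ acting trivially on the circle, and use multiplicativity of $\widehat A$, of the normal-bundle and twisting factors, and of $\phi_g$ to reduce to the even case.

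Finally, when $M^g$ is connected we have $m=1$, so $\phi_g(x_1)=:\lambda\in\R$ is a single number, and applying \eqref{eq higher localised A hat} with $\omega^l$ in place of $\omega$ writes $\widehat A_g(M,\omega^l)=\int_{M^g}c^g\,\widehat A(M^g)\,\omega^l|_{M^g}\big/\det(1-g\,e^{-R^{\cN}/2\pi i})^{1/2}$. Expanding $(\lambda-\tfrac{\omega}{2\pi})^k$ by the binomial theorem, the identity just proved becomes $\sum_{l=0}^{k}\binom{k}{l}\lambda^{\,k-l}(-\tfrac{1}{2\pi})^{l}\,\widehat A_g(M,\omega^l)=0$ for every $k\ge 0$. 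The case $k=0$ gives $\widehat A_g(M,\omega^0)=0$, and if $\widehat A_g(M,\omega^l)=0$ is already known for all $l<k$ then all but the $l=k$ term drop out, yielding $\widehat A_g(M,\omega^k)=0$. This closes the induction and completes the argument.
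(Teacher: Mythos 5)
Your proposal is correct and follows essentially the same route as the paper: part (i) via the extension of the weighted trace to a holomorphically closed $(g)$-summable subalgebra (using polynomial growth), McKean--Singer, and $t\to 0$ localisation to $M^g$ with the extra phase $e^{-i\phi_g(x_j)}$; part (ii) via the Lichnerowicz estimate for the rescaled family $D^s$, vanishing of the index for small $s$, and extraction of Taylor coefficients of the resulting polynomial in $s$. The only (harmless) divergence is the final step for connected $M^g$: the paper normalises the base point $x_0$ to lie in $M^g$ so that $\phi_g\equiv 0$ there, whereas you keep $\lambda=\phi_g(x_1)$ and run a binomial-expansion induction on $k$ — both arguments work.
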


\begin{corollary}
\label{cor}
	Let $\Gamma$, $M$, and $g$ be as in the statement of Theorem \ref{thm main1}. If $M^g$ is connected, then Conjecture \ref{conj} holds for $\omega$ arising from the subring of $H^*(\underline{B}\Gamma,\R)$ generated by $f^*H^1(\underline{B}\Gamma,\R)\cup f^*H^2(\underline{B}\Gamma,\R)$.\\
\end{corollary}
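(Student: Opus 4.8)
The plan is to derive Corollary \ref{cor} from Theorem \ref{thm main1}(ii) by first reducing, through a purely algebraic observation, to classes of degree two, and then running a standard polarisation argument to pass from powers $\omega^k$ of a single $2$-form to arbitrary monomials in degree-one and degree-two classes.

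First I would set $V_i := f^*H^i(\underline{B}\Gamma,\R)$ for $i = 1,2$, viewed inside the orbifold cohomology $H^*(M/\Gamma)$, and let $S$ be the subring they generate. Since each fixed-point component $M^g_j$ is even-dimensional, the integrand in \eqref{eq higher localised A hat} detects only the even-degree part $S^{\mathrm{ev}}$ of $S$, so it suffices to prove $\widehat{A}_g(M,\omega) = 0$ for $\omega$ representing a class in $S^{\mathrm{ev}}$. The crucial point is that $S^{\mathrm{ev}}$ is already generated, as a ring, by the single degree-two subspace
\[
W := V_2 + \Lambda^2 V_1 \subseteq H^2(M/\Gamma),
\]
where $\Lambda^2 V_1$ abbreviates the image of the cup product $\Lambda^2 V_1 \to H^2$. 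Indeed, any monomial in $V_1 \cup V_2$ of even total degree has an even number of degree-one factors, and grouping these into consecutive pairs exhibits the monomial as a product of elements of $\Lambda^2 V_1$ and of $V_2$, hence of $W$. Moreover $W \subseteq V_2 = f^*H^2(\underline{B}\Gamma,\R)$, so every class in $W$ is of the type appearing in \eqref{eq omega}: it admits a $\Gamma$-invariant closed $2$-form lift on $M$ pulled back from a cocycle in $Z^2(\underline{B}\Gamma,\R)$.

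Next I would pick arbitrary classes $[\eta_1],\dots,[\eta_N] \in W$ and closed $2$-forms $\eta_i \in \Omega^2(M)$ of the form \eqref{eq omega} representing them. For real parameters $t_1,\dots,t_N$ the combination $\omega_t := \sum_i t_i \eta_i$ is again of the form \eqref{eq omega}, so, using that $M^g$ is connected, Theorem \ref{thm main1}(ii) gives $\widehat{A}_g(M,\omega_t^{\,k}) = 0$ for every $k \geq 0$. Expanding $\omega_t^{\,k}$ and exploiting the linearity of the integral \eqref{eq higher localised A hat} in its form argument,
\[
0 = \widehat{A}_g\big(M,\omega_t^{\,k}\big) = \sum_{i_1,\dots,i_k} t_{i_1}\cdots t_{i_k}\,\widehat{A}_g\big(M,\eta_{i_1}\cdots\eta_{i_k}\big),
\]
a polynomial identity in $(t_1,\dots,t_N)$ valid for all real values; comparing coefficients yields $\widehat{A}_g(M,\eta_{i_1}\cdots\eta_{i_k}) = 0$ for every multi-index. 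As $N$ and the $\eta_i$ were arbitrary, this shows $\widehat{A}_g(M,-)$ vanishes on every product of classes in $W$, hence — by linearity of \eqref{eq higher localised A hat} — on the whole subring generated by $W$, i.e. on $S^{\mathrm{ev}}$ by the first step. Since odd-degree forms contribute zero to \eqref{eq higher localised A hat} in the present even-dimensional setting, this establishes Conjecture \ref{conj} for all $\omega$ arising from $S$, which is exactly Corollary \ref{cor}.

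Beyond invoking Theorem \ref{thm main1}, the only genuine subtlety is the algebraic observation that the even part of a ring generated by classes of degrees one and two needs only degree-two generators, so that the hypothesis ``$\omega \in \Omega^2(M)$'' of Theorem \ref{thm main1} costs nothing here; the polarisation step itself is routine, requiring only the (immediate) checks that forming $\omega_t$ preserves membership in the class \eqref{eq omega} and that $\widehat{A}_g(M,-)$ is linear in the form argument, being an integral against a fixed density built from $c^g$, $\widehat A(M^g)$ and the normal-bundle factor.
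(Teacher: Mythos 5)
Your argument is correct and matches the paper's proof in essentially every respect: both reduce the subring generated by $f^*H^1(\underline{B}\Gamma,\R)\cup f^*H^2(\underline{B}\Gamma,\R)$ to products of degree-two classes (pairs of degree-one classes multiply into $f^*H^2$) and then polarise a multi-parameter family $\sum_i t_i\eta_i$, extracting mixed coefficients from the resulting polynomial identity. The only cosmetic differences are that you invoke Theorem \ref{thm main1}(ii) as a black box for each $\omega_t$ whereas the paper re-runs that theorem's proof for $\sum_i s_i\omega_i$ and isolates the coefficient of $s_1\cdots s_m$, and that where you simply assert the components $M^g_j$ are even-dimensional, the paper reduces to that case by a suspension argument.
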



Next, we prove two obstruction results when $M/\Gamma$ is non-compact, again via index theory of projectively invariant Dirac operators. First, we show that the higher localised $\widehat{A}$-genera are obstructions to $\Gamma$-invariant Riemannian metrics with positive scalar curvature in a neighbourhood of $M$. More precisely, we prove:

\begin{theorem}
	\label{thm main2}
	Let $M$ be a connected spin manifold on which a discrete group $\Gamma$ acts properly, preserving the spin structure, and suppose $H^1(M)=0$. Let $H$ be a connected $\Gamma$-cocompact hypersurface in $M$ with trivial normal bundle. Let the multipliers $\alpha$ and $\sigma$ be as in subsection \ref{subsection Callias localisation}, and suppose that $g\in\Gamma$ is an $\alpha$-regular element, in the sense of \eqref{eq alpha regular}, whose conjugacy class $(g)$ has polynomial growth. 
	
	Suppose $M$ admits a complete $\Gamma$-invariant Riemannian metric whose scalar curvature is non-negative everywhere on $M$ and positive in a neighbourhood of $H$. Let $\omega$ be as above in \eqref{eq higher localised A hat}. Then if $H^g$ is connected,
$$\widehat A_{g}(H,\omega^k|_H)=0$$
 	for each integer $k\geq 0$. In particular,
 		$$\sum_{j=1}^m\int_{H^g_j}e^{-i\phi_g(x_j)}c^g\cdot\frac{\widehat A(H^g_j)\cdot e^{-\omega/2\pi i}|_{H^g_j}}{\det(1-g e^{-R^{\mathcal{N}_j}/2\pi i})^{1/2}}=0,$$
 		where $g\in\Gamma$ is a $\sigma$-regular element, $H^g_1,\ldots,H^g_m$ are the connected components of the fixed-point set $H^g$ that intersect the support of a cut-off function $c^g_H$ for the $Z^g$-action on $H^g$, and $\cN_j$ is the normal bundle of $H^g_j$ in $H$.

\end{theorem}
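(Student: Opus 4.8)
The plan is to reduce the statement to the cocompact case, Theorem~\ref{thm main1}, by localising a Callias-type operator at the hypersurface; the hypothesis on the scalar curvature of $M$ will then force the relevant higher index on $H$ to vanish. Let $D$ be the projectively invariant Dirac operator on $M$ associated to $\omega$ via Definition~\ref{def projective Dirac} and the multipliers of subsection~\ref{subsection Callias localisation}, and let $D_H$ be its restriction to $H$; the latter is a $(\Gamma,\sigma)$-invariant Dirac operator on the cocompact manifold $H$, which is spin with $\Gamma$ preserving the induced structure since the normal bundle is $\Gamma$-equivariantly trivial. As $H$ is $\Gamma$-cocompact, its neighbourhood of positive scalar curvature contains a $\Gamma$-invariant tubular neighbourhood $U\cong H\times(-\varepsilon,\varepsilon)$; using that $H$ partitions $M$, I would take a $\Gamma$-invariant function $f\colon M\to\R$ with $f^{-1}(0)=H$, equal on $U$ to a small positive multiple of the normal coordinate and equal to a nonzero constant on each component of $M\setminus U$. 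The Callias-type operator $B$ obtained by coupling $D$ to the potential $f$ is then $(\Gamma,\sigma)$-equivariant with $\Gamma$-cocompact ``zero set'' $H$, so it has a higher index $\Ind_{\Gamma,\sigma}(B)\in K_*(C^*_r(\Gamma,\sigma))$, and the Callias-type index theorem of subsection~\ref{subsection Callias localisation} identifies this, up to the usual degree shift, with $\Ind_{\Gamma,\sigma}(D_H)$.

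The core step is to prove that $B$ is invertible. The Lichnerowicz--Weitzenb\"ock formula gives $B^2=\nabla^*\nabla+\tfrac14\kappa+c(i\omega)+f^2+c(df)$, where $\kappa$ is the scalar curvature and $c(df)$, $c(i\omega)$ are bounded zeroth-order terms. On $U$ one has $\kappa\ge\kappa_0>0$ by $\Gamma$-cocompactness; taking the slope of $f$ small makes $\|c(df)\|$ small on $U$, and replacing $\omega$ by $t\omega$ (with the corresponding rescaling of the multiplier) for $|t|$ small makes $\|c(it\omega)\|$ small everywhere, so that $B^2\ge\tfrac18\kappa_0$ on $U$. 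On $M\setminus U$ one has $\kappa\ge 0$, $df=0$, and $f^2$ bounded below by a positive constant, so $B^2>0$ there as well. Hence $B^2\ge\delta>0$ for some $\delta$, so $B$ is invertible and $\Ind_{\Gamma,\sigma}(B)=0$; consequently, for all sufficiently small $|t|\ne 0$, the higher index on $H$ of the Dirac operator built from $t\omega$ vanishes. Getting the terms $c(df)$, $c(\omega)$ and $f^2$ to cooperate against a scalar curvature that is only non-negative away from $H$ is the main obstacle, and it is essentially the content of the Callias localisation of subsection~\ref{subsection Callias localisation}.

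From here the argument runs exactly as in the proof of Theorem~\ref{thm main1}(ii), now applied to $H$. For small $t\ne 0$, $g$ remains $\alpha$-regular in the sense of \eqref{eq alpha regular} and $(g)$ has polynomial growth --- conditions on the underlying real cocycle of the multiplier and on the conjugacy class, both unaffected by the rescaling --- so the weighted trace induces the homomorphism \eqref{eq tau induced}. Applying $\big(\tau^{(g)}_\sigma\big)_*$ to the vanishing index, and then invoking Theorem~\ref{thm main1}(i) for $H$ with the form $t\omega$, the corresponding Dirac operator, and a cut-off function $c^g_H$ for the $Z^g$-action on $H^g$, we find that $\sum_{j=1}^m\int_{H^g_j}e^{-it\phi_g(x_j)}c^g_H\,\widehat A(H^g_j)\,e^{-t\omega/2\pi i}|_{H^g_j}/\det(1-g e^{-R^{\cN_j}/2\pi i})^{1/2}=0$ for all small $t$; comparing coefficients of $t^k$ in this polynomial identity gives
\[
\sum_{j=1}^m\int_{H^g_j}c^g_H\cdot\frac{\widehat A(H^g_j)\cdot\big(\phi_g(x_j)-\tfrac{\omega}{2\pi}\big)^k|_{H^g_j}}{\det(1-g e^{-R^{\cN_j}/2\pi i})^{1/2}}=0
\]
for every integer $k\ge 0$. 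Finally, since $H^g$ is connected and $g$ acts as the identity on $TH^g$, the differential of $\phi_g$ (given by \eqref{eq derivative of phi}) vanishes along $H^g$, so $\phi_g$ is constant there and $m=1$; expanding $\big(\phi_g(x_1)-\tfrac{\omega}{2\pi}\big)^k$ and inducting on $k$, exactly as in the deduction of the last assertion of Theorem~\ref{thm main1}(ii), then yields $\widehat A_g(H,\omega^k|_H)=0$ for every $k\ge 0$, and substituting these back recovers the displayed identity in the statement.
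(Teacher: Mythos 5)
Your overall strategy is the same as the paper's: build a $(\Gamma,\sigma^s)$-invariant Callias-type operator whose potential is uniformly invertible off a cocompact neighbourhood of $H$, use the localisation theorem of subsection \ref{subsection Callias localisation} to identify its index with $\Ind_{\Gamma,\sigma^s}(D_H^s)$, kill that index for small $s$ by a Lichnerowicz estimate, and then run the fixed-point/polynomial-in-$s$ argument of Theorem \ref{thm main1}(ii) on $H$. The final reduction (restricting the multiplier data to $H$ as in Remark \ref{rem alternative sigma}, constancy of $\phi_g$ on the connected $H^g$, comparing coefficients of $t^k$) matches the paper.

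There is, however, a genuine gap in your core invertibility step, precisely on the region $M\setminus U$. There you invoke the pointwise Weitzenb\"ock decomposition $B^2=\nabla^*\nabla+\tfrac{\kappa}{4}+c(it\omega)+f^2+c(df)$ and argue that $f^2\geq c_0^2>0$ together with ``$\norm{c(it\omega)}$ small everywhere for $|t|$ small'' gives positivity. But $M/\Gamma$ is non-compact and $\omega$ is merely a $\Gamma$-invariant representative of $f^*[\beta]$; nothing in the hypotheses makes $\norm{c(\omega)}_x$ uniformly bounded on $M$, so no single rescaling $t$ makes $c(it\omega)$ dominated by the constant $f^2$ outside every cocompact set. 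The paper avoids this by \emph{not} decomposing $(D^s)^2$ on the region where the potential is locally constant of modulus one: there it uses only the operator inequality $(D^s)^2\geq 0$ to get $(B^s_\lambda)^2\geq\lambda^2$, and reserves the Lichnerowicz expansion (where $c(\omega)$ appears) for a cocompact set $K'$ on which $\norm{c(\omega)}$ is bounded. The price of mixing a genuine operator inequality on $M\setminus K$ with a pointwise endomorphism bound on $K'$ is that the two cannot simply be added; one needs the partition-of-unity argument with controlled commutators $c(d\phi_i)$ that occupies the second half of the paper's proof, and which your write-up omits (your jump from the two regional statements to ``$B^2\geq\delta>0$'' would only be automatic if the lower bound were pointwise on the zeroth-order term everywhere, which is exactly what fails). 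To repair your argument you should either impose/establish a uniform bound on $\omega$ outside a cocompact set, or switch to the paper's two-regime estimate plus gluing.
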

The proof of this theorem uses a Callias-type index theorem for projectively invariant Dirac operators, which is discussed in section \ref{sec Callias obstr}.

\begin{remark}
Instead of the assumption that $(g)$ has polynomial growth, the conclusions of Theorem \ref{thm main1} and Theorem \ref{thm main2} also hold whenever the trace $\tau^{(g)}_{\sigma^s}\colon\mathbb{C}^{\sigma^s}\Gamma\to\mathbb{C}$ extends continuously to $C^*_r(\Gamma,\sigma^s)$ for all $s$ in a small interval $(0,\delta)$.
\end{remark}

Second, we show that the projectively invariant higher index is compatible with the framework of quantitative $K$-theory \cite{Oyono2}, which is a refinement of operator $K$-theory in the context of geometric $C^*$-algebras. As shown in subsection \ref{subsec qkt}, this allows us to formulate a quantitative notion of projectively invariant higher index, generalising that defined in \cite{GXY3}. With respect to this index, we obtain a parameterised vanishing theorem where the vanishing propagation depends on the lower bound of the twisting curvature of the projective Dirac operator as well as the scalar curvature. More precisely, we prove:
\begin{theorem}
\label{thm main3}
	Fix $0<\varepsilon<\frac{1}{20}$ and $N\geq 7$. There exists a constant $\lambda_0$ such that the following holds. Let $M$ be a smooth spin Riemannian manifold equipped with a proper isometric action by a discrete group $\Gamma$, and suppose that $H^1(M)=0$. Let $p\colon M\to M/\Gamma$ be the projection, $\kappa$ the scalar curvature on $M$, and $\cS\to M$ the spinor bundle.
	
	Let $\omega\in\Omega^2(M)$ be as in \eqref{eq omega}, and let $D^s$ be the associated projectively invariant Dirac operator on $M$ from Definition \ref{def projective Dirac}, acting on sections of the bundle $\cS_\sL$. Let $\kappa$ denote the scalar curvature on $M$, and let $c$ denote Clifford multiplication. If the estimate
	$$\kappa+4isc(\omega)\geq C_s$$ 
	holds for some positive constant $C_s$, then the quantitative $(\Gamma,\sigma^s)$-equivariant higher index of $D^s$ on $L^2(\cS_\sL)$ at scale $r$ vanishes for all $r\geq\frac{\lambda_0}{\sqrt{C_s}}$:
	$$\Ind_{\Gamma,\sigma^s, L^2}^{\varepsilon,r,N}(D^s)=0\in K_*^{\varepsilon,r,N}(C^*(M;L^2(\cS_\sL))^{\Gamma,\sigma^s}),$$
	where the algebra $C^*(M;L^2(\cS_\sL))^{\Gamma,\sigma^s}$ is as in Definition \ref{def twisted Roe}, and $\Ind_{\Gamma,\sigma^s, L^2}^{\varepsilon,r,N}(D^s)$ is defined as in subsection \ref{subsec quantitative higher ind}.
	\end{theorem}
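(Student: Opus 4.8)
The plan is to follow the standard quantitative-index vanishing strategy pioneered in \cite{GXY3}, adapted to the projective and proper-action setting. The key identity is the twisted Lichnerowicz–Schrödinger–Parthasarathy formula for the square of the projectively invariant Dirac operator $D^s$ on $\cS_\sL = \cS\otimes\sL^{\otimes s}$: one has $(D^s)^2 = \nabla^*\nabla + \tfrac{\kappa}{4} + c(F_s)$, where $F_s$ is the twisting curvature of the connection on $\sL^{\otimes s}$, and by construction of $\sL$ from $\omega$ (Definition \ref{def projective Dirac}), the Clifford contraction of this curvature term is $isc(\omega)$ up to the normalisation that produces the hypothesised bound $\kappa + 4isc(\omega)\geq C_s$. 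Hence $(D^s)^2 \geq \tfrac{C_s}{4}$ as an operator on $L^2(\cS_\sL)$, so $D^s$ is invertible with a spectral gap of width $\sqrt{C_s}/2$ around $0$.

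First I would recall from subsection \ref{subsec qkt} the construction of the quantitative higher index $\Ind_{\Gamma,\sigma^s,L^2}^{\varepsilon,r,N}(D^s)$: it is built from a normalising function $\chi$ applied to $D^s$, producing an $\varepsilon$-almost-idempotent (or $\varepsilon$-unitary, in the odd case) whose propagation is controlled by the support of $\hat\chi$. The standard device is to choose $\chi$ of the form $\chi(t) = \psi(t/\sqrt{C_s})$ for a fixed Schwartz function $\psi$ with $\hat\psi$ compactly supported; finite propagation speed for the wave operator $e^{itD^s}$ (which holds because $D^s$ is a first-order, $\Gamma$-equivariant, properly supported differential operator, so the associated functional calculus respects the coarse structure and descends to the twisted Roe algebra $C^*(M;L^2(\cS_\sL))^{\Gamma,\sigma^s}$ of Definition \ref{def twisted Roe}) then bounds the propagation of $\chi(D^s)$ by a constant multiple of $1/\sqrt{C_s}$. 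The spectral gap ensures that, away from a scale $\sim 1/\sqrt{C_s}$, the functional calculus element is genuinely close to an honest idempotent, so the resulting class in $K_*^{\varepsilon,r,N}$ is represented by something of the form required by the quantitative $K$-theory framework of \cite{Oyono2}.

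Next I would run the homotopy-vanishing argument: rescale the operator as $D^s_u = uD^s$ for $u\in(0,1]$ and track the family of quantitative index classes. Because the spectral gap of $D^s_u$ is $u\sqrt{C_s}/2$ and the propagation of the normalising function scales like $1/(u\sqrt{C_s})$, one obtains a continuous path of $(\varepsilon,r_u,N)$-classes; letting $u\to 0$ (equivalently $r\to\infty$) collapses the class, provided $r\geq\lambda_0/\sqrt{C_s}$ for a universal $\lambda_0$ depending only on $\varepsilon$, $N$, and the fixed choice of $\psi$. This is where the numerical constraints $0<\varepsilon<\tfrac{1}{20}$ and $N\geq 7$ enter: they are exactly the thresholds in the quantitative $K$-theory machinery (controlled Mayer–Vietoris, the $\varepsilon$-to-$2\varepsilon$ relaxation in compositions, and the $N$-fold matrix stabilisation) that make the rescaling homotopy legitimate within a single controlled group, as in \cite{GXY3,Oyono2}.

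The main obstacle, and the step requiring genuine care rather than citation, is verifying that all of this functional calculus actually takes place \emph{inside} the twisted Roe algebra $C^*(M;L^2(\cS_\sL))^{\Gamma,\sigma^s}$ with the correct propagation estimates — that is, that the $\sigma^s$-projective $\Gamma$-invariance is compatible with the coarse/quantitative structure. Concretely one must check that $\chi(D^s)$, and the local operators appearing in the construction of the quantitative index, are $\sigma^s$-equivariant and have finite propagation in the sense of Definition \ref{def twisted Roe}; this follows because $D^s$ commutes with the projective $\Gamma$-action up to the multiplier and because $e^{itD^s}$ has propagation $\leq|t|$, but the bookkeeping of the cocycle through the wave-operator estimate and through the almost-idempotent construction is the technical heart of the argument. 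Once that compatibility is in place, the vanishing is a direct application of the rescaling homotopy, and the explicit constant $\lambda_0$ can be extracted from the propagation bound on $\psi$ together with the fixed $\varepsilon$ and $N$.
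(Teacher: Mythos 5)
Your first step is right and matches the paper: the Lichnerowicz formula gives $(D^s)^2=\nabla^{s*}\nabla^s+\tfrac{\kappa}{4}+isc(\omega)\geq\tfrac{C_s}{4}$, so $D^s$ has a spectral gap of width $\sqrt{C_s}/2$, and rescaling the normalising function so that its Fourier transform has support of size $\sim 1/\sqrt{C_s}$ controls the propagation of the index representative at the scale $\lambda_0/\sqrt{C_s}$. But the vanishing mechanism you propose --- a homotopy through the rescaled operators $D^s_u=uD^s$, ``letting $u\to 0$ (equivalently $r\to\infty$) collapse the class'' --- does not work inside quantitative $K$-theory and is not what is needed. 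As you yourself note, the propagation of the normalising function applied to $uD^s$ scales like $1/(u\sqrt{C_s})$, so as $u\to 0$ the representatives leave every filtration level $A_r$; the equivalence relation defining $K_*^{\varepsilon,r,N}$ only permits $(4\varepsilon,r,4N)$-homotopies at the \emph{fixed} scale $r$, so there is no single group in which your family of classes lives, and ``$r\to\infty$'' proves nothing about vanishing at the finite scale $r\geq\lambda_0/\sqrt{C_s}$ asserted in the theorem. Relatedly, your remark that the spectral gap makes the functional-calculus element ``close to an honest idempotent'' is not enough: every quasiidempotent is close to an honest one in that informal sense; what is needed is closeness to the \emph{trivial} representative.

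The paper's argument is more direct and avoids any homotopy of operators. Because the spectrum of $D^s$ misses $(-\sqrt{C_s}/2,\sqrt{C_s}/2)$, one chooses $u_0$ and the rescaled normalising function $\chi_{2u_0/\sqrt{C_s}}$ so that $|1-\chi_{2u_0/\sqrt{C_s}}(u)^2|<\alpha(\varepsilon)$ on the whole spectrum; then the even-case representative $A_{\chi_{2u_0/\sqrt{C_s}}}(D^s)$ (the difference $p_\chi(D^s)-\begin{psmallmatrix}0&0\\0&1\end{psmallmatrix}$) has operator norm less than $\varepsilon/(2N+1)$, and in the odd case $\|1-g_m(S_\chi(D^s))\|<\varepsilon/N$; Lemma \ref{lem htpy} then says such a norm-small quasiidempotent (resp.\ near-identity quasiinvertible) represents $0$ in $K_*^{\varepsilon,r,N}$ at the scale $r=\lambda_0/\sqrt{C_s}$ dictated by $\supp\widehat\chi$, with a final inclusion-of-filtrations argument handling larger $r$. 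To repair your proposal you should replace the rescaling homotopy by this norm estimate plus Lemma \ref{lem htpy}, and treat the odd-dimensional case explicitly via the polynomial $g_m$ of \eqref{eq gm} rather than in passing. Your emphasis on checking that the functional calculus lands in $C^*(M;L^2(\cS_\sL))^{\Gamma,\sigma^s}$ with the stated propagation is legitimate but is already handled by the finite-propagation-speed argument in subsection \ref{subsec twisted higher index}.
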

\begin{remark}
When $s=0$, Theorem \ref{thm main3} reduces to \cite[Theorem 1.1]{GXY3}.
\end{remark}

The paper is organised as follows. In section \ref{sec prelim}, we formulate the preliminary definitions and properties of the relevant operator algebras and projectively invariant operators. In section \ref{sec cocompact obstr}, we prove Theorem \ref{thm main1} and Corollary \ref{cor}. In section \ref{sec Callias obstr}, we develop some Callias-type index theory in the projective setting and use it to prove Theorem \ref{thm main2}. In section \ref{sec quantitative obstr}, we formulate the quantitative twisted higher index and prove Theorem \ref{thm main3}.

\hfill\vskip 0.3in
\section{Preliminaries}
\label{sec prelim}
We first fix some notation and recall the necessary operator-algebraic and geometric terminology we will need.
\vspace{0.1in}
\subsection{Notation}
\label{subsec notation}
\hfill\vskip 0.05in
	\noindent
	For $X$ a Riemannian manifold, we write $B(X)$, $C_b(X)$, $C_0(X)$, and $C_c(X)$ to denote the $C^*$-algebras of complex-valued functions on $X$ that are, respectively: bounded Borel, bounded continuous, continuous and vanishing at infinity, and continuous with compact support. If $S\subseteq X$ is a Borel subset, we write $\mathbbm{1}_S$ for the associated characteristic function.
	
	
	For any $C^*$-algebra $A$, denote its unitization by $A^+$. If $\cE$ is a Hilbert module over $A$, let $\cB(\cE)$ and $\cK(\cE)$ denote the $C^*$-algebras of bounded adjointable and compact operators on $\cE$ respectively.
	
	For an element $g$ of a group $G$, we use $Z^g$ to denote the centraliser of $g$ in $G$.
	
	
	
	
	
\vspace{0.1in}

\subsection{Multipliers and projective representations}
\label{subsec multipliers}
\hfill\vskip 0.05in
\noindent Let $\Gamma$ be a discrete group.
\begin{definition}
\label{def multiplier}
A \emph{multiplier} on $\Gamma$ is a map $\sigma\colon\Gamma\times\Gamma\to \mathrm{U}(1)$ satisfying
\begin{enumerate}[(i)]
\item $\sigma(\gamma,\mu)\sigma(\gamma\mu,\delta)=\sigma(\gamma,\mu\delta)\sigma(\mu,\delta)$;
\item $\sigma(\gamma,\gamma^{-1})=\sigma(\gamma^{-1},\gamma)=1$,
\end{enumerate}
for all $\gamma,\mu,\delta\in\Gamma$, where $e$ is the identity element in $\Gamma$.
\end{definition}

In other words, a multiplier on $\Gamma$ is an element of $Z^2(\Gamma,\U(1))$, i.e. a $U(1)$-valued group $2$-cocycle, satisfying the additional normalisation condition (ii). This condition is slightly stronger than the normalisation requirement in \cite{MathaiTwisted}, namely $\sigma(e,\gamma)=\sigma(\gamma,e)=1$, and we have adopted it to simplify some calculations. Nevertheless, every multiplier in the sense of \cite{MathaiTwisted} is cohomologous to a multiplier in our sense. Observe that given a multiplier $\sigma$, its pointwise complex conjugate $\bar\sigma$ is also a multiplier.

We will be concerned specifically with multipliers that arise from proper $\Gamma$-actions on manifolds in the following way.
%
Let $M$ be a smooth, connected manifold equipped with a proper action by a discrete group $\Gamma$, preserving the spin structure. Suppose $M/\Gamma$ is compact and that $H^1(M)=0$. Let $\underline{B}\Gamma$ be the classifying space for proper $\Gamma$-actions \cite{BCH}, and $f\colon M/\Gamma\to \underline B\Gamma$ the classifying map for $M$. 

Let $[\beta]\in H^2(\underline B\Gamma,\R)$ be a $2$-cocycle on $\underline B\Gamma$, and $\omega_0$ a closed $2$-form on $M$ representing the de Rham cohomology class $f^*[\beta]\in H^2_{\textnormal{dR}}(M/\Gamma)$ on the orbifold $M/\Gamma$. Since $[\beta]$ lifts trivially to $\underline E\Gamma$, the lift $\omega$ of $\omega_0$ to $M$ is exact, so there exists a one-form $\eta$ (not necessarily $\Gamma$-invariant) such that 
\begin{equation}
\label{eq omega}
\omega=d\eta.
\end{equation}
Since $\omega$ is $\Gamma$-invariant, $d(\gamma^*\eta-\eta)=\gamma^*\omega-\omega=0$ for all $\gamma\in\Gamma$. Thus $\gamma^*\eta-\eta$ is a closed $1$-form on $M$, and hence exact by assumption. It follows that there exists a family
$$\phi\coloneqq\{\phi_\gamma\colon\gamma\in\Gamma\}$$
of smooth functions on $M$ such that
\begin{equation}
\label{eq derivative of phi}	
\gamma^*\eta-\eta=d\phi_\gamma.
\end{equation}
This implies that for any $\gamma,\gamma'\in\Gamma$,
\begin{equation}
\label{cond 1}
d(\phi_\gamma+\gamma^{-1}\phi_{\gamma'}-\phi_{\gamma'\gamma})=0.
\end{equation}
By way of normalisation, we may assume that there exists some $x_0$ such that 
\begin{equation}
\label{cond 2}
\phi_\gamma(\gamma^{-1}x_0)=0
\end{equation}
for each $\gamma\in\Gamma$. This, together with \eqref{cond 1}, implies that $\phi_e\equiv 0$ and that the formula
\begin{equation}
\label{eq alpha}
\alpha_\phi(\gamma,\gamma')=\frac{1}{2\pi}(\phi_{\gamma'}(x_0)+\phi_\gamma(\gamma'x_0)-\phi_{\gamma\gamma'}(x_0))
\end{equation}
defines an element of $Z^2(\Gamma,\R)$. For each $s\in\R$, we have an associated $\U(1)$-valued $2$-cocycle
\begin{equation}
\label{eq sigma}
\sigma^s_{\phi}(\gamma,\gamma')\coloneqq e^{2\pi is\alpha_\phi(\gamma,\gamma')},
\end{equation}
which is a multiplier the sense of Definition \ref{def multiplier}. When it is clear from context, we will use the following abbreviations: 
$$\alpha=\alpha_\phi,\qquad\sigma^s=\sigma_\phi^s,\qquad\sigma=\sigma^1.$$

It can be shown that for a given class $[\beta]$, different choices of $\omega$, $\eta$, and $\phi$ all lead to cohomologous $\sigma$. Nevertheless, it is useful to make specific choices, as the numerical obstructions we compute in Theorems \ref{thm main1} and \ref{thm main2} are expressed in terms of $\phi$.
\begin{remark}
\label{rem fixed points}
Restricting \eqref{eq derivative of phi} to the fixed-point submanifold $M^\gamma$, one sees that
$$\gamma^*\eta|_{M^\gamma}-\eta|_{M^\gamma}=d_{M^\gamma}\phi_\gamma=0,$$
hence on any connected component $M^\gamma$, the function $\phi_\gamma$ is constant. 
\end{remark}

\begin{definition}
\label{def proj action}
Let $E\to M$ be a $\Gamma$-equivariant $\C$-vector bundle. For each $\gamma\in\Gamma$ and $s\in\R$, define the unitary operators $U_\gamma$, $S^s_\gamma$, and $T^s_\gamma$ on $L^2(E)$ by:
\begin{itemize}
\item $U_\gamma u(x)=\gamma u(\gamma^{-1}x)$;
\item $S_\gamma^s u=e^{is\phi_\gamma}u$;
\item $T_\gamma^s=U_\gamma\circ S_\gamma^s$,
\end{itemize}
for $u\in L^2(E)$ and $x\in M$. We refer to $T^s$ as a \emph{projective action} on $L^2(E)$.
\end{definition}
Note that for any $\gamma,\gamma'\in\Gamma$ and $s\in\R$, we have
	$$T_\gamma^s T_{\gamma'}^s=\sigma^s(\gamma,\gamma')T_{\gamma\gamma'}^s.$$
Thus for each $s$, the map $T^s\colon\Gamma\to\U(L^2(E))$ given by $T^s(\gamma)=T^s_\gamma$ defines a projective representation of $\Gamma$ in the sense of subsection \ref{subsec twisted algebras} below. An operator on $L^2(E)$ that commutes with $T^s$ is said to be \emph{$(\Gamma,\sigma^s)$-invariant} or simply \emph{projectively invariant} if no confusion arises.

Now suppose $M$ is $\Gamma$-equivariantly spin, equipped with a $\Gamma$-invariant Riemannian metric. Let $E=\cS$ be the spinor bundle and $\slashed{\partial}$ the spin-Dirac operator. We can obtain a $(\Gamma,\sigma^s)$-invariant Dirac operator as follows. Let $\sL\to M$ be a $\Gamma$-equivariantly trivial line bundle. For each $s\in R$, consider the Hermitian connection
$$\nabla^{\sL,s}\coloneqq d+is\eta$$
on $\sL$. Equip $\cS_{\sL}=\cS\otimes\sL$ with the obvious $\mathbb{Z}_2$-grading. Then we have:
\begin{definition}
\label{def projective Dirac}
For each $s\in\R$, we will refer to the operator
\begin{equation}
\label{eq twisted Dirac}
D^s\coloneqq\slashed{\partial}\otimes\nabla^{\sL,s}\colon L^2(\cS_{\sL})\to L^2(\cS_{\sL})
\end{equation}
as the \emph{$(\Gamma,\sigma^s)$-invariant Dirac operator}, or simply a \emph{projectively invariant} Dirac operator if no confusion arises.
We write $D=D^1$.
\end{definition}
One can verify, as in \cite[Lemma 3.1]{MathaiChatterji}, that $D^s$ commutes with the projective $(\Gamma,\sigma^s)$-action defined by applying Definition \ref{def proj action} to $E=\cS_\sL$.

\vspace{0.1in}
\subsection{Twisted group $C^*$-algebras}
\label{subsec twisted algebras}
%
Given a multiplier $\sigma$ on a discrete group $\Gamma$, a \emph{unitary $(\Gamma,\sigma)$-representation}, or \emph{projective representation}, of $\Gamma$ is a map 
$$T\colon\Gamma\to\mathrm{U}(H),\qquad \gamma\mapsto T_\gamma,$$
for some Hilbert space $H$, such that
\begin{align*}
T_e&=\textnormal{Id}_H;\\
T_{\gamma_1}\circ T_{\gamma_2}&=\sigma(\gamma_1,\gamma_2)T_{\gamma_1\gamma_2}
\end{align*}
for all $\gamma_1,\gamma_2\in\Gamma$. 

The \emph{twisted group algebra} $\mathbb{C}^\sigma\Gamma$ is the associative $*$-algebra over $\C$ with basis $\{\bar\gamma\colon\gamma\in\Gamma\}$, where the multiplication and $*$-operation are given on basis elements by
$$\bar{\gamma}_1\cdot\bar{\gamma}_2=\sigma(\gamma_1,\gamma_2)\overline{\gamma_1\gamma_2},\qquad\bar{\gamma}^*=\overline{\gamma^{-1}},$$
and extended linearly and conjugate-linearly respectively. It follows from condition (ii) in Definition \ref{def multiplier} that the identity element of $\mathbb{C}^\sigma\Gamma$ is $\bar e$ and that $\bar\gamma^{-1}=\bar\gamma^*$.


The \emph{reduced twisted group $C^*$-algebra} is constructed as follows. Consider $l^2(\Gamma)$ with its usual basis $\{\delta_\gamma\}_{\gamma\in\Gamma}$, and define a projective representation 
$$\lambda\colon\Gamma\to\U(l^2(\Gamma))$$ 
by the following action on basis elements:
$$\lambda_{\gamma_1}\delta_{\gamma_2}\coloneqq\sigma(\gamma_1,\gamma_1^{-1}\gamma_2)\delta_{\gamma_1\gamma_2}.$$
Then $\lambda$ extends naturally to an injective $*$-representation on $\mathcal{B}(l^2(\Gamma))$, called the \emph{left regular representation} of $\mathbb{C}^\sigma\Gamma$. The reduced twisted group $C^*$-algebra $C^*_r(\Gamma,\sigma)$ is the completion of $\mathbb{C}^\sigma\Gamma$ with respect to the induced norm. When convenient, we will simply abbreviate $\lambda_{\gamma}$ to $\bar\gamma$.

When $\sigma\equiv 1$ is the trivial multiplier, $\C^\sigma\Gamma$ and $C^*_r(\Gamma,\sigma)$ are the ordinary group algebra and reduced group $C^*$-algebra respectively.
\begin{remark}
\label{rem not multiplier}
More generally, twisted group algebras can be formed using an arbitrary group $2$-cocycle instead of a multipler. In this case, we would have $\bar{\gamma}^*=\bar\gamma^{-1}=\bar\sigma(\gamma,\gamma^{-1})\overline{\gamma^{-1}}$. We will use this extra flexibility in Proposition \ref{prop consistent trace} below.
\end{remark}


\vspace{0.1in}
\section{Weighted traces and projective indices}
\label{sec weighted traces}
We now define the traces associated to conjugacy classes, in an appropriate sense, on the algebraic part of the twisted group algebra that we will use in this paper.

When $\sigma\equiv 1$ is the trival multiplier, we recover from $\mathbb{C}^\sigma\Gamma$ the group algebra $\mathbb{C}\Gamma$. In this case, for any conjugacy class $(g)\subseteq\Gamma$, the map
\begin{align}
\label{eq tau g}
	\tau^{(g)}\colon\mathbb{C}
	\Gamma&\to\mathbb{C}\nonumber\\
	\sum_{\gamma\in\Gamma}a_\gamma\gamma&\mapsto\sum_{\gamma\in(g)}a_\gamma
\end{align}
is a trace. When $\sigma$ is non-trivial, this formula ceases in general to define a trace on $\mathbb{C}^\sigma\Gamma$. However, for conjugacy classes of certain \emph{$\sigma$-regular} elements, one can define a trace via a weighted sum determined by $\sigma$.
\begin{definition}
\label{def regular}
Let $\sigma$ be a multiplier on $\Gamma$. An element $g\in\Gamma$ is \emph{$\sigma$-regular} if
$$\sigma(g,z)=\sigma(z,g)$$
for all $z\in Z^g$.
\end{definition}
The following equivalent formulation is useful:
\begin{lemma}
\label{lem equiv reg}
An element $g\in\Gamma$ is $\sigma$-regular if and only if $\bar z^{-1}\bar g\bar z=\bar g$ for any $z\in Z^g$.
\end{lemma}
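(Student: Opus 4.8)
The plan is to unpack both sides of the claimed equivalence directly from the definition of the twisted multiplication in $\mathbb{C}^\sigma\Gamma$ and the normalisation conditions on a multiplier, since this is a purely algebraic manipulation with no analysis involved. The statement to prove is that $g$ is $\sigma$-regular, i.e.\ $\sigma(g,z)=\sigma(z,g)$ for all $z\in Z^g$, if and only if $\bar z^{-1}\bar g\bar z=\bar g$ for all $z\in Z^g$.

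First I would compute $\bar z^{-1}\bar g\bar z$ in $\mathbb{C}^\sigma\Gamma$ using the basis multiplication rule $\bar\gamma_1\cdot\bar\gamma_2=\sigma(\gamma_1,\gamma_2)\overline{\gamma_1\gamma_2}$. Using $\bar z^{-1}=\bar z^*=\overline{z^{-1}}$, I get $\bar z^{-1}\bar g=\sigma(z^{-1},g)\,\overline{z^{-1}g}$, and then $(\bar z^{-1}\bar g)\bar z=\sigma(z^{-1},g)\,\sigma(z^{-1}g,z)\,\overline{z^{-1}gz}$. Since $z\in Z^g$ we have $z^{-1}gz=g$, so $\bar z^{-1}\bar g\bar z=\sigma(z^{-1},g)\,\sigma(z^{-1}g,z)\,\bar g$. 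Hence the condition $\bar z^{-1}\bar g\bar z=\bar g$ is equivalent to the scalar identity $\sigma(z^{-1},g)\,\sigma(z^{-1}g,z)=1$.

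Next I would show this scalar identity is equivalent to $\sigma(g,z)=\sigma(z,g)$. The cocycle identity (i) from Definition \ref{def multiplier}, applied to the triple $(z^{-1},g,z)$, reads $\sigma(z^{-1},g)\sigma(z^{-1}g,z)=\sigma(z^{-1},gz)\sigma(g,z)$. Applying (i) again to the triple $(z^{-1},z,\cdot)$ together with normalisation (ii) $\sigma(z^{-1},z)=1$ — or more directly, using (i) on $(z^{-1},gz,\,\cdot)$ appropriately — I can rewrite $\sigma(z^{-1},gz)$ in terms of $\sigma(z,\cdot)$ quantities; the cleanest route is: apply (i) to $(z^{-1},z,z^{-1}gz)=(z^{-1},z,g)$ to get $\sigma(z^{-1},z)\sigma(e,g)=\sigma(z^{-1},zg)\sigma(z,g)$, hence, using $\sigma(z^{-1},z)=1$ and $\sigma(e,g)=1$, that $\sigma(z^{-1},zg)=\overline{\sigma(z,g)}$; and since $zg=gz$, $\sigma(z^{-1},gz)=\overline{\sigma(z,g)}$. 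Substituting back, $\sigma(z^{-1},g)\sigma(z^{-1}g,z)=\overline{\sigma(z,g)}\,\sigma(g,z)$. Therefore $\bar z^{-1}\bar g\bar z=\bar g$ $\iff$ $\overline{\sigma(z,g)}\sigma(g,z)=1$ $\iff$ $\sigma(g,z)=\sigma(z,g)$ (recalling values lie in $\mathrm{U}(1)$), which is exactly $\sigma$-regularity of $g$. Since $z\in Z^g$ was arbitrary, the equivalence holds.

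The only mild obstacle is bookkeeping: making sure each application of the cocycle identity (i) uses a triple for which the needed normalisations $\sigma(e,-)=\sigma(-,e)=1$ and $\sigma(z,z^{-1})=\sigma(z^{-1},z)=1$ (these follow from (i) and (ii)) are available, and keeping track of complex conjugates coming from the $\mathrm{U}(1)$-values (so that $\overline{\sigma(z,g)}=\sigma(z,g)^{-1}$). I would state the two or three instances of (i) I need as displayed equations, then combine them in a single line; no step requires anything beyond Definition \ref{def multiplier} and the multiplication rule in $\mathbb{C}^\sigma\Gamma$.
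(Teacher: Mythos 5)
Your proof is correct and follows essentially the same route as the paper's: both expand $\bar z^{-1}\bar g\bar z$ as a scalar multiple of $\bar g$ and use the cocycle identity together with the normalisations $\sigma(z^{-1},z)=1$ and $\sigma(e,g)=1$ to show that scalar equals $1$ exactly when $\sigma(g,z)=\sigma(z,g)$.
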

\begin{proof}
Note that
$$\bar z^{-1}\bar g\bar z=\sigma(z^{-1},gz)\sigma(g,z)\overline{z^{-1}gz}=\sigma(z^{-1},zg)\sigma(g,z)\bar g,$$
where we have used that $zg=gz$, while applying Definition \ref{def multiplier} shows that
$$\bar g=\sigma(z^{-1},z)\sigma(z^{-1}z,g)\bar g=\sigma(z^{-1},zg)\sigma(z,g)\bar g.$$
The two expressions on the right are equal if and only if $g$ is $\sigma$-regular.
\end{proof}

The property of being $\sigma$-regular is invariant under conjugation in $\Gamma$, hence we may speak of $\sigma$-regular conjugacy classes. The next lemma implies that in order to consider any traces at all on $\C^\sigma\Gamma$, it is necessary to deal with $\sigma$-regular elements (see also \cite[Lemma 1.2]{Passman}):

\begin{lemma}
If $g$ is not $\sigma$-regular, then for any trace map $t\colon\mathbb{C}^\sigma\Gamma\to\mathbb{C}$, $t(\overline{g})=0$.
\end{lemma}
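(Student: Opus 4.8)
The plan is to use only the defining property of a trace, namely cyclicity $t(ab)=t(ba)$, together with the explicit computation of conjugation by basis elements already carried out in the proof of Lemma \ref{lem equiv reg}. The point is that conjugating $\bar g$ by $\bar z$ for $z\in Z^g$ fixes $t(\bar g)$ by cyclicity, while non-regularity forces this conjugation to multiply $\bar g$ by a scalar different from $1$; the two facts together pin $t(\bar g)$ to $0$.

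First I would record that for any $z\in Z^g$ one has $\bar z\,\bar z^{-1}=\sigma(z,z^{-1})\,\overline{zz^{-1}}=\bar e$, using the normalisation condition (ii) of Definition \ref{def multiplier}, so that by linearity and cyclicity of $t$,
$$t(\bar z^{-1}\bar g\bar z)=t\big((\bar g\bar z)\,\bar z^{-1}\big)=t\big(\bar g\,(\bar z\bar z^{-1})\big)=t(\bar g).$$
Next I would invoke the computation in the proof of Lemma \ref{lem equiv reg}: using $zg=gz$ and the cocycle identity (i) one obtains $\bar z^{-1}\bar g\bar z=\sigma(z^{-1},zg)\sigma(g,z)\,\bar g$ and $\bar g=\sigma(z^{-1},zg)\sigma(z,g)\,\bar g$, and dividing these (all factors lie in $\U(1)$, hence are invertible) gives
$$\bar z^{-1}\bar g\bar z=\frac{\sigma(g,z)}{\sigma(z,g)}\,\bar g .$$
Since $g$ is not $\sigma$-regular, Definition \ref{def regular} furnishes some $z_0\in Z^g$ with $\sigma(g,z_0)\neq\sigma(z_0,g)$; set $c\coloneqq\sigma(g,z_0)/\sigma(z_0,g)\in\U(1)$, so $c\neq 1$. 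Combining the two displays with $z=z_0$ yields $t(\bar g)=t(\bar z_0^{-1}\bar g\bar z_0)=t(c\,\bar g)=c\,t(\bar g)$, hence $(1-c)\,t(\bar g)=0$ and therefore $t(\bar g)=0$.

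I do not expect any genuine obstacle here: the argument is a two-line application of cyclicity once the conjugation formula is available, and that formula is essentially already established in Lemma \ref{lem equiv reg}. The only point requiring mild care is bookkeeping with the multiplier $\sigma$ — in particular making sure the normalisation $\bar z\bar z^{-1}=\bar e$ is used so that cyclicity can be applied cleanly — and I would simply cite the computation from the proof of Lemma \ref{lem equiv reg} rather than reproduce it.
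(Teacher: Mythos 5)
Your proof is correct and follows essentially the same route as the paper: both arguments reduce to the conjugation identity $\bar z^{-1}\bar g\bar z=\lambda\bar g$ with $\lambda\neq 1$ from (the proof of) Lemma \ref{lem equiv reg}, and then use the trace property — you phrase it as cyclicity, the paper as vanishing on the commutator $\bar z(\bar z^{-1}\bar g)-(\bar z^{-1}\bar g)\bar z=(1-\lambda)\bar g$ — to force $t(\bar g)=0$. The two formulations are equivalent, and your explicit check that $\bar z\bar z^{-1}=\bar e$ is the same normalisation the paper uses implicitly.
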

\begin{proof}
If $g$ is not $\sigma$-regular, then by Lemma \ref{lem equiv reg} there exists $z\in Z^g$ such that $\bar z^{-1}\bar g\bar z=\lambda\bar g$ for $\lambda\neq 1$. Now
$$\bar z(\bar z^{-1}\bar g)-(\bar z^{-1}\bar g)\bar z=\bar g-\bar z^{-1}\bar g\bar z=(1-\lambda)\bar g.$$
Since $t$ is a trace, $t(1-\lambda)\bar g=0$ and hence $t(\bar g)=0$.	
\end{proof}
To define traces for $\sigma$-regular conjugacy classes, we will use the following weighting function. Define a function $\theta\colon\Gamma\to\U(1)$ by
\begin{equation}
\label{eq theta}
\theta(\gamma)=\begin{cases}
\sigma(g^{-1},k)\sigma(k^{-1},g^{-1}k)&\textnormal{if }\gamma=k^{-1}gk\textnormal{ for some $k\in\Gamma$},\\
1&\textnormal{otherwise.}	
\end{cases}
\end{equation}
To prove that $\theta$ is well-defined, as well as for later calculations, we will make use of the following lemma.
\begin{lemma}
\label{lem invert and swap}
For any multiplier $\sigma$ on $\Gamma$ and $h,k\in\Gamma$, we have
$$\sigma(h,k)\sigma(k^{-1},h^{-1})=1.$$
\end{lemma}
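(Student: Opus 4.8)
The plan is to derive the identity purely from the cocycle relation (i) and the normalisation (ii) of Definition \ref{def multiplier}, via two well-chosen substitutions, so no extra machinery is needed.

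First I would record the standard consequence that $\sigma(e,\gamma)=\sigma(\gamma,e)=1$ for all $\gamma\in\Gamma$: taking $\gamma=e$ in (ii) gives $\sigma(e,e)=1$, and substituting $\mu=e$ into (i) yields $\sigma(\gamma,e)\sigma(\gamma,\delta)=\sigma(\gamma,\delta)\sigma(e,\delta)$, hence $\sigma(\gamma,e)=\sigma(e,\delta)$ for all $\gamma,\delta$; specialising $\delta=e$ then $\gamma=e$ gives the claim. Next, I would apply (i) with $(\gamma,\mu,\delta)=(h,k,k^{-1}h^{-1})$: the left-hand side becomes $\sigma(h,k)\,\sigma\big(hk,(hk)^{-1}\big)=\sigma(h,k)$ by (ii) (applied to the element $hk$), while the right-hand side becomes $\sigma(h,h^{-1})\,\sigma(k,k^{-1}h^{-1})=\sigma(k,k^{-1}h^{-1})$, again by (ii). This gives $\sigma(h,k)=\sigma(k,k^{-1}h^{-1})$.

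Finally, I would apply (i) once more with $(\gamma,\mu,\delta)=(k,k^{-1},h^{-1})$: the left-hand side is $\sigma(k,k^{-1})\,\sigma(e,h^{-1})=1$ by (ii) and the first step, and the right-hand side is $\sigma(k,k^{-1}h^{-1})\,\sigma(k^{-1},h^{-1})$. Combining with the previous identity, $\sigma(h,k)\,\sigma(k^{-1},h^{-1})=\sigma(k,k^{-1}h^{-1})\,\sigma(k^{-1},h^{-1})=1$, as claimed. Every step is an elementary manipulation of the two defining properties, so I do not expect a genuine obstacle; the only point requiring care is the bookkeeping of which group elements play the roles of $\gamma,\mu,\delta$ in each use of (i), and noting that $\sigma\big(xy,(xy)^{-1}\big)=1$ is just (ii) applied to $xy$.
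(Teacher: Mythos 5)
Your proof is correct: each application of the cocycle identity and of the normalisation (ii) checks out, and the preliminary fact $\sigma(e,\gamma)=\sigma(\gamma,e)=1$ is derived cleanly. This is essentially the same argument as the paper's, which packages the identical associativity computation for the product $h\cdot k\cdot k^{-1}\cdot h^{-1}$ as the evaluation of $\bar k\bar h\bar h^{-1}\bar k^{-1}$ in the twisted group algebra $\mathbb{C}^{\sigma}\Gamma$ with two different bracketings; you have simply unwound that bookkeeping into explicit uses of the cocycle relation.
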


\begin{proof}
Note that
\begin{align*}
\bar e&=\bar k\bar h\bar h^{-1}\bar k^{-1}=\sigma(k,h)\overline{kh}\cdot\overline{h^{-1}}\cdot\overline{k^{-1}}.
\end{align*}
Multiplying and simplifying, this equals
$$\sigma(k,h)\sigma(h^{-1},k^{-1})\bar e.$$
The claim follows by equating coefficients of $\bar e$.
\end{proof}
\begin{proposition}
The function $\theta$ in \eqref{eq theta} is well-defined.	
\end{proposition}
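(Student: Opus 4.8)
The plan is to reinterpret the weight in \eqref{eq theta} as a structure constant of the twisted group algebra and then invoke Lemmas \ref{lem invert and swap} and \ref{lem equiv reg}. First I would record, from two applications of the rule $\bar\gamma_1\bar\gamma_2=\sigma(\gamma_1,\gamma_2)\overline{\gamma_1\gamma_2}$ together with the identity $\overline{g^{-1}}=\bar g^{-1}$ (a consequence of normalisation (ii) in Definition \ref{def multiplier}), the formula
$$\bar k^{-1}\bar g^{-1}\bar k=\sigma(g^{-1},k)\,\sigma(k^{-1},g^{-1}k)\,\overline{k^{-1}g^{-1}k}\qquad\text{in }\mathbb{C}^\sigma\Gamma.$$
Thus, writing $\gamma=k^{-1}gk$ (so that $\gamma^{-1}=k^{-1}g^{-1}k$), the first branch of \eqref{eq theta} says exactly $\theta(\gamma)\,\overline{\gamma^{-1}}=\bar k^{-1}\bar g^{-1}\bar k$. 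Since the $\overline{\mu}$ form a $\mathbb{C}$-basis of $\mathbb{C}^\sigma\Gamma$, and since the ``otherwise'' branch is manifestly unambiguous while $\theta$ obviously takes values in $\U(1)$, well-definedness reduces to the claim: if $k,l\in\Gamma$ satisfy $k^{-1}gk=l^{-1}gl$, then $\bar k^{-1}\bar g^{-1}\bar k=\bar l^{-1}\bar g^{-1}\bar l$.

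To prove the claim I would set $z\coloneqq lk^{-1}$, noting that $k^{-1}gk=l^{-1}gl$ is equivalent to $z\in Z^g$, and write $l=zk$. Then
$$\bar l^{-1}\bar g^{-1}\bar l=\overline{k^{-1}z^{-1}}\,\bar g^{-1}\,\overline{zk}=\sigma(k^{-1},z^{-1})^{-1}\sigma(z,k)^{-1}\,\bar k^{-1}\,\bar z^{-1}\bar g^{-1}\bar z\,\bar k,$$
and Lemma \ref{lem invert and swap} applied to the pair $(z,k)$ shows the scalar prefactor equals $1$. Since $Z^{g^{-1}}=Z^g$ and $\sigma$-regularity of $g$ carries over to $g^{-1}$ — inverting the relation $\bar z^{-1}\bar g\bar z=\bar g$ of Lemma \ref{lem equiv reg} gives $\bar z^{-1}\bar g^{-1}\bar z=\bar g^{-1}$ for all $z\in Z^g$ — it follows that $\bar z^{-1}\bar g^{-1}\bar z=\bar g^{-1}$, hence $\bar l^{-1}\bar g^{-1}\bar l=\bar k^{-1}\bar g^{-1}\bar k$. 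Equating coefficients of the common basis vector $\overline{k^{-1}g^{-1}k}=\overline{l^{-1}g^{-1}l}$ then yields the equality of the two candidate values of $\theta(\gamma)$.

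There is no substantial obstacle here; the only genuinely ingenious step is the opening observation that the product $\sigma(g^{-1},k)\sigma(k^{-1},g^{-1}k)$ is precisely the structure constant relating $\bar k^{-1}\bar g^{-1}\bar k$ to $\overline{k^{-1}g^{-1}k}$. Once that is noticed, Lemma \ref{lem invert and swap} absorbs the change-of-representative cocycle terms and $\sigma$-regularity (via Lemma \ref{lem equiv reg} applied to $g^{-1}$) absorbs the central element $z$, so no direct manipulation of the $2$-cocycle identity is needed. A more computational route — substituting $k=zl$ with $z\in Z^g$ directly into $\sigma(g^{-1},k)\sigma(k^{-1},g^{-1}k)$ and iterating Definition \ref{def multiplier}(i) and Definition \ref{def regular} — would also work but is appreciably more tedious.
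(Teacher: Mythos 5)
Your proof is correct and follows essentially the same route as the paper: both arguments identify the weight as a structure constant of a conjugation in $\mathbb{C}^\sigma\Gamma$, use Lemma \ref{lem invert and swap} to absorb the cocycle terms from splitting $\overline{zk}$ into $\bar z\bar k$, and invoke $\sigma$-regularity through the algebra relation of Lemma \ref{lem equiv reg} to absorb the centralising element. The only (cosmetic, and arguably tidier) difference is that you conjugate $\bar g^{-1}$, so the structure constant is literally $\theta(\gamma)$, whereas the paper conjugates $\bar g$ and in effect establishes well-definedness of the complex conjugate $\sigma(k^{-1},g)\sigma(k^{-1}g,k)=\overline{\theta(\gamma)}$, which is equivalent.
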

\begin{proof}
First note that if $\gamma=k^{-1}a^{-1}gak$ for some $a\in Z^g$, then
\begin{align}
\label{eq well-defined lhs}
\sigma(k^{-1}a^{-1},g)\sigma(k^{-1}a^{-1}g,ak)\overline{k^{-1}gk}
&=\sigma(k^{-1}a^{-1},g)\sigma(k^{-1}a^{-1}g,ak)\overline{k^{-1}a^{-1}gak}\nonumber\\&=\overline{k^{-1}a^{-1}}\cdot\overline{g}\cdot\overline{ak}\nonumber\\
&=\bar{\sigma}(k^{-1},a^{-1})\bar{\sigma}(a,k)\overline{k^{-1}}\bar{a}^{-1}\bar{g}\bar{a}\bar{k}.\nonumber
\end{align}
Since $a\in Z^g$ and $g$ is $\sigma$-regular, this equals
\begin{align*}
\bar{\sigma}(k^{-1},a^{-1})\bar{\sigma}(a,k)\overline{k^{-1}}\bar g\bar k=\bar{\sigma}(k^{-1},a^{-1})\bar{\sigma}(a,k)\sigma(k^{-1},g)\sigma(k^{-1}g,k)\overline{k^{-1}gk}.
\end{align*}
By Lemma \ref{lem invert and swap}, this equals
\begin{equation*}
\label{eq well-defined rhs}
	\sigma(k^{-1},g)\sigma(k^{-1}g,k)\overline{k^{-1}gk}.
\end{equation*}
Equating coefficients of $\overline{k^{-1}gk}$ shows that $\theta$ is well-defined.
\end{proof}
\begin{remark}
\label{rem theta g}
Observe that $\theta(e)=\theta(g)=1$.
\end{remark}
\begin{remark}
When $\Gamma$ is finite, it is known that the set of $\sigma$-regular conjugacy classes in $\Gamma$ is in bijection with the set of distinct irreducible $(\Gamma,\sigma)$-representations.
\end{remark}
\begin{definition}
\label{def weighted trace}
Suppose that $(g)$ is $\sigma$-regular for some multiplier $\sigma$ on $\Gamma$. Define the \emph{$\sigma$-weighted $(g)$-trace} $\tau^{(g)}_\sigma\colon\mathbb{C}^\sigma\Gamma\to\mathbb{C}$ by
\begin{equation*}
	\sum_{\gamma\in\Gamma}a_\gamma\bar\gamma\mapsto\sum_{\gamma\in(g)}\theta(\gamma)a_\gamma.
\end{equation*}
\end{definition}
We will show that $\tau^{(g)}_\sigma$ is a trace in two steps. Define
$$\sigma'=\sigma d\bar\theta,$$
where $d\bar\theta(\gamma_1,\gamma_2)=\bar\theta(\gamma_1\gamma_2)\theta(\gamma_1)\theta(\gamma_2)$ is a $2$-coboundary. Then $\sigma'$ is a $2$-cocycle cohomologous to $\sigma$. Let $\mathbb{C}^{\sigma'}\Gamma$ be the twisted group algebra defined using $\sigma'$ (see Remark \ref{rem not multiplier}), with basis $\left\{\underline\gamma\colon\gamma\in\Gamma\right\}$. We first prove:
\begin{proposition}
\label{prop consistent trace}
The map $\tau^{(g)}\colon\mathbb{C}^{\sigma'}
	\Gamma\to\mathbb{C}$ defined by
\begin{align*}
	\sum_{\gamma\in\Gamma}a_\gamma\underline\gamma\mapsto\sum_{\gamma\in(g)}a_\gamma
\end{align*}
is a trace.	
\end{proposition}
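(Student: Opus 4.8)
The plan is to check the trace identity $\tau^{(g)}(xy)=\tau^{(g)}(yx)$ directly on the basis $\{\underline\gamma\}$ of $\mathbb{C}^{\sigma'}\Gamma$; by linearity of $\tau^{(g)}$ and bilinearity of the product, this suffices. For basis elements one has $\underline{\gamma_1}\cdot\underline{\gamma_2}=\sigma'(\gamma_1,\gamma_2)\,\underline{\gamma_1\gamma_2}$, so $\tau^{(g)}(\underline{\gamma_1}\cdot\underline{\gamma_2})$ equals $\sigma'(\gamma_1,\gamma_2)$ when $\gamma_1\gamma_2\in(g)$ and vanishes otherwise, and likewise with $\gamma_1,\gamma_2$ interchanged. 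Since $\gamma_2\gamma_1=\gamma_2(\gamma_1\gamma_2)\gamma_2^{-1}$ is conjugate to $\gamma_1\gamma_2$, the two expressions vanish simultaneously, so the claim reduces to showing that $\sigma'(\gamma_1,\gamma_2)=\sigma'(\gamma_2,\gamma_1)$ whenever $\gamma_1\gamma_2\in(g)$.

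Unwinding $\sigma'=\sigma\,d\bar\theta$ and cancelling the factor $\theta(\gamma_1)\theta(\gamma_2)$, which is symmetric in $\gamma_1,\gamma_2$, further reduces this to a single identity relating $\sigma$ and $\theta$, namely
\[
\sigma(\gamma_1,\gamma_2)\,\theta(\gamma_1\gamma_2)=\sigma(\gamma_2,\gamma_1)\,\theta(\gamma_2\gamma_1)\qquad\text{whenever }\gamma_1\gamma_2\in(g)
\]
(up to the placement of conjugation bars in $d\bar\theta$). In words: the $\mathrm{U}(1)$-asymmetry of $\sigma$ on a pair whose product is conjugate to $g$ is exactly absorbed by the ratio of $\theta$-weights, which is precisely what $\theta$ was constructed to do.

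I would prove this identity by a short computation inside the auxiliary algebra $\mathbb{C}^\sigma\Gamma$. The key ingredient is the conjugation formula
\[
\bar k^{-1}\bar g\bar k=\overline{\theta(k^{-1}gk)}\;\overline{k^{-1}gk}\qquad\text{in }\mathbb{C}^\sigma\Gamma,\ \text{for every }k\in\Gamma,
\]
which follows by expanding the left-hand side as $\sigma(k^{-1},g)\sigma(k^{-1}g,k)\,\overline{k^{-1}gk}$ and identifying the scalar with $\overline{\theta(k^{-1}gk)}$ via the cocycle condition of Definition \ref{def multiplier}(i) and two applications of Lemma \ref{lem invert and swap}; the $\sigma$-regularity of $g$ is exactly what makes this identification work, just as in the proof that $\theta$ is well defined. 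Now write $\gamma_1\gamma_2=k^{-1}gk$; then $\gamma_2\gamma_1=l^{-1}gl$ with $l=k\gamma_2^{-1}$, and a further application of Lemma \ref{lem invert and swap} gives $\bar l^{-1}\bar g\bar l=\bar\gamma_2\,(\bar k^{-1}\bar g\bar k)\,\bar\gamma_2^{-1}$. Applying the conjugation formula to both sides, rewriting $\bar\gamma_2\,\overline{\gamma_1\gamma_2}\,\bar\gamma_2^{-1}$ in terms of $\bar\gamma_1$ and $\bar\gamma_2$ via $\overline{\gamma_1\gamma_2}=\overline{\sigma(\gamma_1,\gamma_2)}\,\bar\gamma_1\bar\gamma_2$, and comparing the coefficients of $\overline{\gamma_2\gamma_1}$ yields the displayed identity. (Alternatively, one can prove it directly by substituting $\gamma_1\gamma_2=k^{-1}gk$ and $\gamma_2\gamma_1=(k\gamma_2^{-1})^{-1}g(k\gamma_2^{-1})$ into \eqref{eq theta} and running the same cocycle bookkeeping as in the well-definedness argument, with one extra rotation by $\gamma_2$.)

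The only genuine obstacle is this phase bookkeeping — keeping track of the $\mathrm{U}(1)$-valued cocycle factors through the conjugation computation so that they all cancel — and the decisive structural input throughout is the $\sigma$-regularity of $g$ (without which $\theta$ is not even well defined). Everything else, namely the reduction to basis elements, the conjugacy of $\gamma_1\gamma_2$ and $\gamma_2\gamma_1$, and the linearity arguments, is routine.
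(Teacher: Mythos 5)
Your proof is correct and follows essentially the same route as the paper's: both rest on the conjugation identity for $\bar g$ (equivalently $\underline g$) together with the well-definedness of $\theta$ coming from $\sigma$-regularity, the only difference being that the paper does the bookkeeping in $\mathbb{C}^{\sigma'}\Gamma$, where \eqref{eq conjugation} has already absorbed the $\theta$-factor, while you keep it explicit in $\mathbb{C}^{\sigma}\Gamma$. Your target identity $\sigma(\gamma_1,\gamma_2)\theta(\gamma_1\gamma_2)=\sigma(\gamma_2,\gamma_1)\theta(\gamma_2\gamma_1)$ is the correct one, and your hedge about the conjugation bars is well taken: it corresponds to the convention $\sigma'(\gamma_1,\gamma_2)=\sigma(\gamma_1,\gamma_2)\,\theta(\gamma_1\gamma_2)\bar\theta(\gamma_1)\bar\theta(\gamma_2)$, which is the one that actually makes \eqref{eq conjugation} hold, rather than the formula for $d\bar\theta$ as literally printed in the paper.
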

\begin{proof}
	By definition of the multiplication in $\mathbb{C}^{\sigma'}\Gamma$, we have for any $k\in\Gamma$ that
	\begin{align*}
		\underline{k}^{-1}\underline{g}\underline{k}&=\bar\sigma'(k,k^{-1})\underline{k^{-1}}\underline g\underline k\\
		&=\bar\sigma'(k,k^{-1})\sigma'(g,k)\sigma'(k^{-1},gk)\underline{k^{-1}gk}.
	\end{align*}
	Using the definition of $\sigma'$, this is equal to
$$\bar\sigma(k,k^{-1})\sigma(g,k)\sigma(k^{-1},gk)\theta(e)\bar\theta(k)\bar\theta(k^{-1})\bar\theta(gk)\theta(g)\theta(k)\bar\theta(k^{-1}gk)\theta(k^{-1})\theta(gk)\underline{k^{-1}gk}.$$
Upon cancelling and applying Remark \ref{rem theta g}, together with the definition of $\theta$, this simplifies to $\underline{k^{-1}gk}$. It follows that
\begin{equation}
\label{eq conjugation}	
\underline{k}^{-1}\underline{g}\underline{k}=\underline{k^{-1}gk}
\end{equation}
for any $k\in\Gamma$. By extension, this holds if $g$ is replaced by any $h\in(g)$. Indeed, if $h=m^{-1}gm$, then
	\begin{align*}
	\underline k^{-1}\underline h\,\underline k&=\underline k^{-1}\underline{m^{-1}gm}\underline k\\
	&=\underline k^{-1}\underline m^{-1}\underline g\underline m\,\underline k\\
	&=\bar\sigma(k,k^{-1})\bar\sigma(m,m^{-1})\underline{k^{-1}}\cdot\underline{m^{-1}}\underline{g}\underline{m}\cdot\underline{k}\\
	&=\bar\sigma(k,k^{-1})\bar\sigma(m,m^{-1})\sigma(m,k)\sigma(k^{-1},m^{-1})\underline{k^{-1}}\cdot\underline{m^{-1}}\underline{g}\underline{m}\cdot\underline{k}\\
	&=\bar\sigma(k,k^{-1})\bar\sigma(m,m^{-1})\sigma(m,k)\sigma(k^{-1},m^{-1})\sigma(k^{-1}m^{-1},mk)(\underline{mk})^{-1}\underline{g}\underline{mk}.
	\end{align*}
Applying Lemma \ref{lem invert and swap} and \eqref{eq conjugation}, this is equal to
\begin{align*}
\underline{(mk)^{-1}gmk}&=\underline{k^{-1}hk}.	
\end{align*}

	To see that $\tau^{(g)}$ is a trace, it suffices to show that for any $k,m\in\Gamma$, we have $\text{tr}^{(g)}\underline{k}\underline{m} = \text{tr}^{(g)}\underline{m}\underline{k}$. We may assume that $km=h\in(g)$, so that $\underline{k}\underline{m}=\lambda\underline{h}$ for some $\lambda\in\U(1)$. Then clearly $\text{tr}^{(g)}\underline{k}\underline{m}=\lambda$. On the other hand,
$$\underline m\underline k=\underline k^{-1}(\underline k\underline m)\underline k=\underline k^{-1}(\lambda\underline h)\underline k=\lambda\underline k^{-1}\underline h\,\underline k.$$
By the preceding discussion, this is equal to
$$\lambda\underline{k^{-1}hk},$$
hence $\text{tr}^{(g)}\underline m\underline k=\lambda$.
\end{proof}
We now use Proposition \ref{prop consistent trace} to deduce:
\begin{proposition}
\label{prop special trace}
	$\tau^{(g)}_\sigma\colon\mathbb{C}^\sigma\Gamma\to\mathbb{C}$ is a trace.
\end{proposition}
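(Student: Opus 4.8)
The plan is to deduce the trace property of $\tau^{(g)}_\sigma$ from that of $\tau^{(g)}$ on $\mathbb{C}^{\sigma'}\Gamma$, which we already have from Proposition \ref{prop consistent trace}, by transporting it along the natural isomorphism $\mathbb{C}^\sigma\Gamma\cong\mathbb{C}^{\sigma'}\Gamma$ that exists because $\sigma$ and $\sigma'=\sigma\,d\bar\theta$ are cohomologous. Concretely, I would define a linear map $\Phi\colon\mathbb{C}^\sigma\Gamma\to\mathbb{C}^{\sigma'}\Gamma$ on basis elements by $\Phi(\bar\gamma)=\theta(\gamma)\,\underline\gamma$ and extend linearly, and then show that $\Phi$ is a unital $*$-isomorphism with $\tau^{(g)}\circ\Phi=\tau^{(g)}_\sigma$.

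The substance of the argument is the verification that $\Phi$ is multiplicative, and this is where essentially all the (routine) work lies. Comparing $\Phi(\bar\gamma_1\bar\gamma_2)=\sigma(\gamma_1,\gamma_2)\,\theta(\gamma_1\gamma_2)\,\underline{\gamma_1\gamma_2}$ with $\Phi(\bar\gamma_1)\Phi(\bar\gamma_2)=\theta(\gamma_1)\theta(\gamma_2)\,\sigma'(\gamma_1,\gamma_2)\,\underline{\gamma_1\gamma_2}$, one sees that the two coincide precisely because the coboundary $d\bar\theta$ relating $\sigma'$ to $\sigma$ is manufactured out of $\theta$, so the $\theta$-factors on the two sides cancel. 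Since each $\theta(\gamma)\in\U(1)$, the assignment $\underline\gamma\mapsto\overline{\theta(\gamma)}\,\bar\gamma$ is a two-sided inverse, so $\Phi$ is bijective, and unitality is immediate from $\theta(e)=1$ (Remark \ref{rem theta g}). For compatibility with $*$, I would combine $\bar\gamma^*=\overline{\gamma^{-1}}$ in $\mathbb{C}^\sigma\Gamma$, the formula $\underline\gamma^*=\bar\sigma'(\gamma,\gamma^{-1})\,\underline{\gamma^{-1}}$ in $\mathbb{C}^{\sigma'}\Gamma$ (the $*$-operation for a genuine $2$-cocycle, as in Remark \ref{rem not multiplier}), the normalisation $\sigma(\gamma,\gamma^{-1})=1$ from Definition \ref{def multiplier}, and once more $\theta(e)=1$; this is a short computation requiring no input beyond the cocycle identities already used in the proof of Proposition \ref{prop consistent trace}.

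Once $\Phi$ is known to be an algebra isomorphism, the conclusion is formal. Applying $\Phi$ to $\sum_\gamma a_\gamma\bar\gamma$ simply multiplies the $\gamma$-th coefficient by the unit $\theta(\gamma)$, and $\tau^{(g)}$ then sums these coefficients over the conjugacy class $(g)$; comparing with Definition \ref{def weighted trace} shows this is exactly $\tau^{(g)}_\sigma$, so $\tau^{(g)}_\sigma=\tau^{(g)}\circ\Phi$. Hence, for all $a,b\in\mathbb{C}^\sigma\Gamma$,
$$\tau^{(g)}_\sigma(ab)=\tau^{(g)}\big(\Phi(a)\Phi(b)\big)=\tau^{(g)}\big(\Phi(b)\Phi(a)\big)=\tau^{(g)}_\sigma(ba),$$
using that $\Phi$ is a homomorphism and that $\tau^{(g)}$ is a trace. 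Thus $\tau^{(g)}_\sigma$ is a trace, as claimed. The only obstacle is bookkeeping: keeping the $\sigma$- and $\theta$-factors straight when checking that $\Phi$ preserves products and adjoints, which is handled exactly as in Proposition \ref{prop consistent trace}.
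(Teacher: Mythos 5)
Your overall strategy is exactly the paper's: its proof defines the map $f_\theta\colon\mathbb{C}^\sigma\Gamma\to\mathbb{C}^{\sigma'}\Gamma$, $\bar\gamma\mapsto\bar\theta(\gamma)\underline{\gamma}$, observes that it is a $*$-isomorphism because $\sigma'$ and $\sigma$ differ by the coboundary built from $\theta$, and reads off the trace property of $\tau^{(g)}_\sigma$ from that of $\tau^{(g)}$ via the resulting commutative diagram. So there is no difference in method. However, the one step you dismiss as routine bookkeeping --- multiplicativity of your map $\Phi(\bar\gamma)=\theta(\gamma)\underline{\gamma}$ --- fails as you have set it up. Substituting the paper's formula $\sigma'(\gamma_1,\gamma_2)=\sigma(\gamma_1,\gamma_2)\,\bar\theta(\gamma_1\gamma_2)\theta(\gamma_1)\theta(\gamma_2)$ into your two displayed expressions gives
$$\Phi(\bar\gamma_1\bar\gamma_2)=\sigma(\gamma_1,\gamma_2)\,\theta(\gamma_1\gamma_2)\,\underline{\gamma_1\gamma_2},\qquad \Phi(\bar\gamma_1)\Phi(\bar\gamma_2)=\sigma(\gamma_1,\gamma_2)\,\theta(\gamma_1)^2\theta(\gamma_2)^2\,\bar\theta(\gamma_1\gamma_2)\,\underline{\gamma_1\gamma_2},$$
so the $\theta$-factors do not cancel: equality would force $\theta(\gamma_1\gamma_2)^2=\theta(\gamma_1)^2\theta(\gamma_2)^2$, i.e.\ that $\theta^2$ be a group homomorphism, which is false in general.

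The map that is multiplicative for $\sigma'=\sigma\,d\bar\theta$ with the coboundary as written is $\bar\gamma\mapsto\bar\theta(\gamma)\underline{\gamma}$; your choice $\bar\gamma\mapsto\theta(\gamma)\underline{\gamma}$ would instead require the coboundary $\theta(\gamma_1\gamma_2)\bar\theta(\gamma_1)\bar\theta(\gamma_2)$. This is only a conjugation slip, not a wrong idea, but you must fix it consistently: whichever convention you adopt also determines whether $\tau^{(g)}\circ\Phi$ comes out as the $\theta$-weighted or the $\bar\theta$-weighted functional, so the identification with $\tau^{(g)}_\sigma$ from Definition \ref{def weighted trace} has to be rechecked at the same time. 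Once the conjugations are aligned, the final step --- transporting the identity $\tau^{(g)}(ab)=\tau^{(g)}(ba)$ along the isomorphism --- is formal and is precisely how the paper concludes.
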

\begin{proof}
Using that $\sigma'$ is cohomologous to $\sigma$ via the coboundary $d\bar\theta$, one verifies that the map
\begin{align*}
f_\theta\colon\mathbb{C}^\sigma\Gamma&\to\mathbb{C}^{\sigma'}\Gamma,	\\
\bar\gamma&\mapsto\bar\theta(\gamma)\underline{\gamma}
\end{align*}
is an isomorphism of $*$-algebras. We have a commutative diagram
$$\begin{tikzcd}
\mathbb{C}^{\sigma'}\Gamma\ar[r,"\tau^{(g)}"]&\C\\
\mathbb{C}^{\sigma}\Gamma\ar[u,"f_\theta"]\ar[r,"\tau_\sigma^{(g)}"]\ar{r}&\C\ar[u,"="],
\end{tikzcd}
$$
from which it follows that $\tau_\sigma^{(g)}$ is a trace.
\end{proof}
For real-valued group cocycles, the notion of regularity also applies: for any $\alpha\in Z^2(\Gamma,\R)$, we say that $g\in\Gamma$ is \emph{$\alpha$-regular} if
\begin{equation}
\label{eq alpha regular}
\alpha(g,z)=\alpha(z,g)
\end{equation}
for all $z\in Z^g$. For such $g$, let us define a function $\psi\colon\Gamma\to\R$ by
\begin{equation}
\label{eq psi}
\psi(\gamma)=\begin{cases}
\alpha(g^{-1},k)+\alpha(k^{-1},g^{-1}k)&\textnormal{if }\gamma=k^{-1}gk\textnormal{ for some $k\in\Gamma$},\\
0&\textnormal{otherwise.}	
\end{cases}
\end{equation}
As with the function $\theta$ in \eqref{eq theta}, one checks that $\psi$ is well-defined. 

\begin{corollary}
\label{cor trace any s}
Let $\alpha\in Z^2(\Gamma,\R)$ be defined as in \eqref{eq alpha}, and let $\{\sigma^s\}_{s\in\R}$ be the associated family multipliers as in \eqref{eq sigma}. If $g\in\Gamma$ is $\alpha$-regular, then for every $s\in\R$,
\begin{enumerate}[(i)]
	\item the conjugacy class $(g)$ is $\sigma^s$-regular;
	\item the formula
$$\sum_{\gamma\in\Gamma}a_\gamma\bar\gamma\mapsto\sum_{\gamma\in(g)}e^{2\pi is\psi}(\gamma) a_\gamma.$$
defines a trace $\tau^{(g)}_{\sigma^s}\colon\mathbb{C}^{\sigma^s}\Gamma\to\mathbb{C}$, where $\psi$ is as in \eqref{eq psi}.
\end{enumerate}
\end{corollary}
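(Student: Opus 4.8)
The plan is to obtain both assertions by specialising, to the family $\{\sigma^s\}_{s\in\R}$, the results already proved for a single multiplier. Recall from subsection \ref{subsec multipliers} that each $\sigma^s$ is a multiplier in the sense of Definition \ref{def multiplier}, so the hypotheses of Propositions \ref{prop consistent trace} and \ref{prop special trace} are available with $\sigma$ replaced by $\sigma^s$. For (i), I would simply unwind Definition \ref{def regular}: the centraliser $Z^g$ is purely group-theoretic, hence the same for every $\sigma^s$, and since $g$ is $\alpha$-regular we have $\alpha(g,z)=\alpha(z,g)$ for all $z\in Z^g$, so exponentiating gives
$$\sigma^s(g,z)=e^{2\pi is\alpha(g,z)}=e^{2\pi is\alpha(z,g)}=\sigma^s(z,g),$$
whence $g$, and therefore its whole conjugacy class, is $\sigma^s$-regular.

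For (ii), the key point is to identify the abstract weighting function $\theta$ of \eqref{eq theta} attached to the multiplier $\sigma^s$ with the explicit exponential $e^{2\pi is\psi}$: when $\gamma=k^{-1}gk$ one computes
$$\theta(\gamma)=\sigma^s(g^{-1},k)\,\sigma^s(k^{-1},g^{-1}k)=e^{2\pi is(\alpha(g^{-1},k)+\alpha(k^{-1},g^{-1}k))}=e^{2\pi is\psi(\gamma)},$$
while both sides equal $1$ otherwise. This also settles the well-definedness of $\psi$: since $e^{2\pi is\psi}=\theta$ is well-defined for every $s$ one recovers well-definedness of $\psi$ itself, or, more directly, one repeats the argument that $\theta$ is well-defined with $\sigma$ and products replaced by $\alpha$ and sums, as only the cocycle identity is used, now in its additive $\R$-valued form. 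Granting (i), Proposition \ref{prop special trace} applied to $\sigma^s$ shows that $\tau^{(g)}_{\sigma^s}\colon\C^{\sigma^s}\Gamma\to\C$ is a trace, and Definition \ref{def weighted trace} says it sends $\sum_{\gamma}a_\gamma\bar\gamma$ to $\sum_{\gamma\in(g)}\theta(\gamma)a_\gamma=\sum_{\gamma\in(g)}e^{2\pi is\psi(\gamma)}a_\gamma$, which is exactly the claimed formula.

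I do not expect a genuine obstacle here: once part (i) is in hand, everything is an instance of Propositions \ref{prop consistent trace}--\ref{prop special trace}. The only step requiring a little care is the bookkeeping above identifying $\theta$, built from $\sigma^s$, with $e^{2\pi is\psi}$, together with the accompanying check that $\psi$ is well-defined as an $\R$-valued function rather than merely modulo $\Z$ — which is precisely the place where one invokes the cocycle identity for $\alpha$ in $Z^2(\Gamma,\R)$ and not just its reduction mod $\Z$.
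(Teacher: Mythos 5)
Your proposal is correct and follows essentially the same route as the paper: part (i) is the identical exponentiation of the $\alpha$-regularity identity, and part (ii) reduces to Propositions \ref{prop consistent trace} and \ref{prop special trace} applied with the weighting $\theta^s=e^{2\pi is\psi}$. Your explicit identification of $e^{2\pi is\psi}$ with the function $\theta$ of \eqref{eq theta} built from $\sigma^s$ is exactly what the paper's substitution ``$\theta\to\theta^s$, $\sigma'\to\sigma^s d\theta^s$'' amounts to, and your remark on recovering the well-definedness of $\psi$ over $\R$ (not just mod $\Z$) from the additive cocycle identity is the check the paper leaves to the reader.
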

\begin{proof}
By \eqref{eq sigma} and the fact that $g$ is $\alpha$-regular,
$$\sigma^s(g,z)=e^{2\pi is\alpha(g,z)}=e^{2\pi is\alpha(z,g)}=\sigma^s(z,g)$$
for any $z\in Z^g$, which proves (i). For (ii), 
note that
the proofs of Propositions \ref{prop consistent trace} and \ref{prop special trace}, with $\theta$ replaced by 
\begin{equation}
\label{eq theta s}
\theta^s\coloneqq e^{2\pi is\psi}	
\end{equation} 
and $\sigma'$ replaced by $(\sigma^{s})'=\sigma^s d\theta^s$, imply that $\tau^{(g)}_{\sigma^s}$ is a trace for each $s$.
\end{proof}





\vspace{0.1in}
\subsection{Traces on operators}
 	\hfill\vskip 0.05in

\noindent We now define weighted traces on projectively invariant operators of an appropriate class. Let $M$, $E$, $\phi$, and $\sigma$ be as in subsection \ref{subsec multipliers}.
\begin{definition}
A continuous function $c\colon M\to[0,\infty)$ is called a \emph{cut-off function} for the $\Gamma$-action on $M$ if for any $x\in M$ we have
$$\sum_{g\in\Gamma}c(g^{-1}x)=1.$$
\end{definition}
\begin{remark}
For any proper action, a cut-off function always exists. If the action is cocompact, the cut-off function can be chosen to be compactly supported.
\end{remark}
We have the following useful lemma:
\begin{lemma}
\label{lem useful integration properties}
	Suppose $M/\Gamma$ is compact, and let $c$ be any cut-off function.
	\begin{enumerate}[(i)]
	\item Let $f$ be a smooth function on $M/\Gamma$ and $\widetilde{f}$ its lift to $M$. Then
	$$\int_M c(x)\widetilde{f}(x)\,dx=\int_{M/\Gamma}f(z)\,dz.$$
	\item Let $h$ be a continuous function on $M\times M$ that is invariant under the diagonal action of $\Gamma$.
	If $c(x)h(x,y),c(y)h(x,y)$ are integrable on $M\times M$, then
	$$\int_{M\times M}c(x)h(x,y)\,dx\,dy=\int_{M\times M}c(y)h(x,y)\,dx\,dy.$$
	\end{enumerate}
\end{lemma}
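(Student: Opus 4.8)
\textbf{Proof proposal for Lemma \ref{lem useful integration properties}.}

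The plan is to reduce both statements to the defining property of the cut-off function, namely $\sum_{g\in\Gamma}c(g^{-1}x)=1$, combined with a change of variables using that everything in sight is $\Gamma$-invariant and $M/\Gamma$ is compact (so all the sums converge and the integrals are finite). For part (i), I would start from the right-hand side: fix a fundamental domain, or better, work invariantly by writing $\int_{M/\Gamma}f(z)\,dz=\int_{M/\Gamma}f(z)\big(\sum_{g\in\Gamma}c(g^{-1}\cdot)\big)(z)\,dz$, which makes sense because the bracketed sum descends to the constant function $1$ on $M/\Gamma$. Unfolding the quotient integral against this sum gives $\sum_{g\in\Gamma}\int_{\text{f.d.}}f(z)c(g^{-1}z)\,dz$; since $\widetilde f$ is $\Gamma$-invariant, $f(z)=\widetilde f(x)$ for any lift $x$, and reindexing $x\mapsto gx$ in each summand (using $\Gamma$-invariance of the measure) turns the sum of integrals over translates of a fundamental domain into a single integral over $M$, yielding $\int_M c(x)\widetilde f(x)\,dx$. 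The properness of the action and cocompactness guarantee that $c$ can be taken compactly supported and that the interchange of sum and integral is justified by absolute convergence.

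For part (ii), the idea is the same but applied to the function of two variables. Insert $1=\sum_{g\in\Gamma}c(g^{-1}x)$ into $\int_{M\times M}c(y)h(x,y)\,dx\,dy$ to get $\sum_{g\in\Gamma}\int_{M\times M}c(g^{-1}x)c(y)h(x,y)\,dx\,dy$, then in the $g$-th term substitute $x\mapsto gx$, $y\mapsto gy$ simultaneously. The measure is $\Gamma$-invariant, $c(g^{-1}(gx))=c(x)$, and the diagonal $\Gamma$-invariance of $h$ gives $h(gx,gy)=h(x,y)$; so the $g$-th term becomes $\int_{M\times M}c(x)c(g^{-1}y)h(x,y)\,dx\,dy$. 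Summing over $g$ and using $\sum_g c(g^{-1}y)=1$ collapses this to $\int_{M\times M}c(x)h(x,y)\,dx\,dy$, which is the claim. The hypothesis that both $c(x)h(x,y)$ and $c(y)h(x,y)$ are integrable is exactly what is needed to apply Fubini/Tonelli and to reorder the (absolutely convergent) sum over $\Gamma$ with the integrals.

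The only real subtlety — and the step I would be most careful about — is the justification of interchanging $\sum_{g\in\Gamma}$ with the integrals and with each other. For part (i) with $c$ compactly supported this is immediate since only finitely many translates $g\cdot\supp(c)$ meet any compact set and $M/\Gamma$ is compact. For part (ii) one does not a priori know $h$ is compactly supported, so I would invoke the stated integrability hypotheses together with Tonelli's theorem applied to $|c(g^{-1}x)c(y)h(x,y)|$: since $\sum_g c(g^{-1}x)=1$ with nonnegative terms, $\sum_g\int|c(g^{-1}x)c(y)h(x,y)| = \int|c(y)h(x,y)|<\infty$, legitimising all rearrangements. Everything else is a bookkeeping change of variables, so I would present it compactly rather than belabour it.
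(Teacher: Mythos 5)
Your proof is correct, and it takes a more direct route than the paper's. The paper first constructs a particular smooth cut-off function $\mathfrak{c}$ via the decomposition $M=\bigcup_i\Gamma\times_{F_i}U_i$ and a partition of unity, cites \cite[Lemma 3.10]{Wang-Wang} for that special choice, and then upgrades to an arbitrary cut-off function by observing that $\int_M c(x)r(x)\,dx$ is independent of $c$ for $\Gamma$-invariant $r$ (applied to $r=\widetilde f$ for (i) and to $r(x)=\int_M h(x,y)\,dy$, $r(y)=\int_M h(x,y)\,dx$ for (ii)). Your argument bypasses both the special construction and the citation: the unfolding identity $1=\sum_{g}c(g^{-1}x)$ combined with the change of variables $(x,y)\mapsto(gx,gy)$ and the diagonal invariance of $h$ is exactly the mechanism underlying the paper's "independence of $c$" step, and you apply it directly to both claims. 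Your Tonelli justification (nonnegativity of the terms $c(g^{-1}x)$ summing to $1$, plus the two stated integrability hypotheses) is precisely what is needed to legitimise the interchange of $\sum_g$ with the double integral, and you correctly use both hypotheses, one before and one after the substitution. The only cosmetic wrinkle is the reindexing of $c(gy)$ versus $c(g^{-1}y)$ after the substitution, which is harmless since the sum runs over all of $\Gamma$. What the paper's route buys is reuse of an existing reference and a cleaner statement of the independence principle; what yours buys is a self-contained two-line computation.
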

\begin{proof}
Following \cite[section 3]{Wang-Wang}, let us define the following special cut-off function. First write $M$ as
$$M=\bigcup_{i\in\mathbb{N}}\Gamma\times_{F_i}U_i,$$
for finite subgroups $F_i$ of $\Gamma_i$ and $F_i$-invariant, relatively compact subsets $U_i\subseteq M$. Then $M/\Gamma$ admits the open cover $\{U_i/F_i\}_{i\in\mathbb{N}}$, along with a subordinate partition of unity $\{\phi_i\}_{i\in\mathbb{N}}$. Write $\widetilde{\phi}_i$ for the $\Gamma$-invariant lift of $\phi_i$ to $M$. For each $i$, define the function $\varphi_i\colon G\times_{F_i}U_i\to[0,\infty)$ by
$$\varphi_i([g,u])\coloneqq
\begin{cases}
\widetilde{\phi}_i(u) & \textnormal{for }g\in F_i,\\
0 & \textnormal{otherwise.}	
\end{cases}
$$
Then $\varphi_i$ extends by zero to an $F_i$-invariant function $\psi_i$ on all of $M$. Define a smooth function $\mathfrak{c}\colon M\to[0,\infty)$ by 
\begin{equation}
\label{eq frak c}
\mathfrak{c}(x)\coloneqq\sum_{i\in\mathbb{N}}\frac{1}{|F_i|}\psi_i(x).	
\end{equation}
By \cite[Lemma 3.9]{Wang-Wang}, $\mathfrak{c}$ is a compactly supported, smooth cut-off function on $M$ whenever the $\Gamma$-action is cocompact.

The case of $c=\mathfrak{c}$, where $\frak c$ is defined as in \eqref{eq frak c}, is a special case of \cite[Lemma 3.10]{Wang-Wang}. Now observe that for any $\Gamma$-invariant function $r$ on $M$, the integral $\int_M c(x)r(x)\,dx$ is independent of the choice of cut-off function $c$. Applying this to $r=\widetilde f$ yields the general case of (i), while letting
$$r(x)=\int_M h(x,y)\,dy,\qquad r(y)=\int_M h(x,y)\,dx$$
yields the general case of (ii).
\end{proof}


\begin{definition}
\label{def g trace}
Let $\sigma$ be a multiplier on $\Gamma$, and let $(g)\subseteq\Gamma$ be a $\sigma$-regular conjugacy class. 
\begin{itemize}[leftmargin=0.29in]
\item A bounded $(\Gamma,\sigma)$-invariant operator $S$ on $L^2(\cS_\sL)$ is said to be of \emph{$(\sigma,g)$-trace class} if for all $\phi_1,\phi_2\in C_c(M)$,
\begin{enumerate}[(i)]
\item the operator $\phi_1 T_{h^{-1}}S\phi_2$ is of trace class for any $h\in(g)$;
\item the sum
\begin{equation}
\label{eq g trace class}
\sum_{h\in(g)}\tr(\phi_1 T_{h^{-1}}S\phi_2	)
\end{equation}
converges absolutely.
\end{enumerate}
\item If $S$ is of $(\sigma,g)$-trace class, we define its \emph{$\sigma$-weighted $(g)$-trace} to be
$$\text{tr}^{(g)}_{\sigma}(S)\coloneqq\sum_{h\in(g)}\theta(h)\cdot\text{tr}(c_1T_{h^{-1}}Sc_2),$$
for some $c_1,c_2\in C_c(M)$ such that $c_1c_2$ is a cut-off function on $M$.
\end{itemize}

\end{definition}
\begin{remark}
\label{rem finite prop trace}
	If $S$ has finite propagation, then for any $\phi\in C_c(M)$,
	there exists a compactly supported continuous function $f$ such that $\phi S=\phi Sf$.  The $\sigma$-weighted $(g)$-trace of $S$ can then be equivalently defined to be
	$$\sum_{h\in(g)}\theta(h)\cdot\text{tr}(cT_{h^{-1}}S),$$
for any cut-off function $c$. Compare \cite[Definition 3.13]{Wang-Wang} and \cite[Definition 3.1]{Hang}.
\end{remark}

\begin{lemma}
\label{lem integral formula}
If $S$ is a $(\Gamma,\sigma)$-invariant $(\sigma,g)$-trace 	class operator, then
$$\textnormal{tr}^{(g)}_\sigma(S)=\sum_{h\in(g)}\theta(h)\int_M e^{-i\phi_h(x)}c(x)\textnormal{Tr}(h^{-1}K_S(hx,x))\,dx,$$
where $c$ is a cut-off function as in Definition \ref{def g trace}, and $K_S$ denotes the Schwartz kernel of $S$.
\end{lemma}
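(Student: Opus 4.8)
The plan is to unwind Definition~\ref{def g trace} and to compute the operator trace of each summand $c_1 T_{h^{-1}} S c_2$ via Schwartz kernels, then to reassemble the sum. By hypothesis $S$ is of $(\sigma,g)$-trace class, so each $\phi_1 T_{h^{-1}} S \phi_2$ with $\phi_i \in C_c(M)$ is trace class and $\sum_{h\in(g)}\theta(h)\tr(c_1 T_{h^{-1}} S c_2)$ converges absolutely; this legitimises the rearrangements below. First I would record the Schwartz kernel of the relevant composition: writing $K_S(x,y)$ for the kernel of $S$, the operator $S c_2$ has kernel $(x,y)\mapsto K_S(x,y)c_2(y)$, and since $(T_{h^{-1}}v)(x)=e^{i\phi_{h^{-1}}(hx)}\,h^{-1}\!\cdot v(hx)$ by Definition~\ref{def proj action}, the operator $c_1 T_{h^{-1}} S c_2$ has kernel
$$(x,y)\;\longmapsto\; c_1(x)\,c_2(y)\,e^{i\phi_{h^{-1}}(hx)}\;h^{-1}\!\cdot K_S(hx,y).$$

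Next I would apply the standard kernel--trace identity: for a trace-class operator with continuous Schwartz kernel, the operator trace equals the integral over $M$ of the fibrewise trace of the kernel along the diagonal. For the operators $S$ of interest (heat-type operators of projectively invariant Dirac operators, and products of these with cut-offs) the kernels are smooth, so this applies; cf.\ the analogous computation in \cite[section 3]{Wang-Wang}. This gives
$$\tr(c_1 T_{h^{-1}} S c_2)=\int_M c_1(x)c_2(x)\,e^{i\phi_{h^{-1}}(hx)}\,\Tr\!\big(h^{-1}K_S(hx,x)\big)\,dx.$$
I would then simplify the phase. Specialising the cocycle relation \eqref{cond 1} to $\gamma'=\gamma^{-1}$ shows that $x\mapsto\phi_\gamma(x)+\phi_{\gamma^{-1}}(\gamma x)$ is locally constant; evaluating this constant with the help of \eqref{cond 2}, and using the normalisation $\sigma(\gamma,\gamma^{-1})=1$ from Definition~\ref{def multiplier}(ii), identifies it with an integer multiple of $2\pi$, so that $e^{i\phi_{h^{-1}}(hx)}=e^{-i\phi_h(x)}$ for all $x$. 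Since $c\coloneqq c_1 c_2$ is a cut-off function, this turns the previous line into $\tr(c_1 T_{h^{-1}} S c_2)=\int_M c(x)\,e^{-i\phi_h(x)}\,\Tr(h^{-1}K_S(hx,x))\,dx$; multiplying by $\theta(h)$ and summing over $h\in(g)$ yields the asserted formula.

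Finally, to confirm that the right-hand side depends only on the cut-off $c=c_1 c_2$ (and is in fact independent of which cut-off is chosen), I would check that $x\mapsto e^{-i\phi_h(x)}\Tr(h^{-1}K_S(hx,x))$ is $\Gamma$-invariant: the $(\Gamma,\sigma)$-invariance $T_\gamma S = S T_\gamma$ of $S$ translates into a transformation rule for $K_S$ under the diagonal $\Gamma$-action twisted by the functions $\phi_\gamma$, and \eqref{cond 1} makes the twisting phases cancel; then Lemma~\ref{lem useful integration properties}(i) shows the integral against a cut-off is unchanged under any other choice of cut-off (compare Remark~\ref{rem finite prop trace}). I expect the two essentially routine pressure points to be (a) invoking the kernel--trace identity with the correct regularity hypotheses in the possibly non-cocompact setting, where I would fall back on the argument in \cite[section 3]{Wang-Wang}, and (b) the bookkeeping of the $\U(1)$-valued phases coming from $\sigma$ and the family $\phi$, where sign errors are easy to make; the genuine content, the identity $e^{i\phi_{h^{-1}}(hx)}=e^{-i\phi_h(x)}$, follows directly from the relations set up in subsection~\ref{subsec multipliers}.
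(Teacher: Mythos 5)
Your proof is correct and follows essentially the same route as the paper: compute the Schwartz kernel of $c_1T_{h^{-1}}Sc_2$ and integrate its fibrewise trace along the diagonal, then multiply by $\theta(h)$ and sum. The only difference is cosmetic — the paper writes $T_{h^{-1}}=T_h^{-1}=S_h^{-1}U_h^{-1}$ and reads off the phase $e^{-i\phi_h(x)}$ directly, whereas you expand $T_{h^{-1}}=U_{h^{-1}}S_{h^{-1}}$ and then reconcile $e^{i\phi_{h^{-1}}(hx)}$ with $e^{-i\phi_h(x)}$ via \eqref{cond 1} and the normalisation $\sigma(h,h^{-1})=1$, which amounts to verifying $T_hT_{h^{-1}}=\mathrm{Id}$ pointwise.
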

\begin{proof}
	Let $c_1,c_2\in C_c(M)$ such that $c_1 c_2=c$. Note that for any $u\in L^2(\cS_{\sL})$ and $h\in\Gamma$, we have
	\begin{align*}
	(U_h^{-1}Sc_2)u(x)&=\int_M h^{-1}K_S(hx,y)c_2(y)u(y)\,dy.		
	\end{align*}
	Since $T_h^{-1}=T_{h^{-1}}=S_h^{-1}U_h^{-1}$, this implies that
	$$(c_1 T_h^{-1}Sc_2)u(x)=\int_M e^{-i\phi_h(x)}c_1(x)h^{-1} K_S(hx,y)c_2(y)u(y)\,dy.$$
	Taking the trace gives
	\begin{align*}
	\textnormal{tr}(c_1T_{h^{-1}}Sc_2)&=\int_M e^{-i\phi_h(x)}c_1(x)\textnormal{Tr}(h^{-1}K_S(hx,x))c_2(x)\,dx\\
	&=\int_M e^{-i\phi_h(x)}c(x)\textnormal{Tr}(h^{-1}K_S(hx,x))\,dx.
	\end{align*}
	Multiplying by $\theta(h)$ and taking a sum finishes the proof.
\end{proof}

Our task now is to show that $\tr^{(g)}_\sigma$ satisfies the tracial property and that it is independent of the choices of $c_1$, $c_2$, and $c$ made in Definition \ref{def g trace}. This will be carried out in Proposition \ref{prop trace properties}, after we record some preparatory observations in the form of Lemmas \ref{lem special 0} and \ref{lem special useful}.

\begin{lemma}
\label{lem special 0}
Suppose $h=x^{-1}gx$. Then for any $\gamma\in\Gamma$,
$$\sigma(\gamma^{-1}h,\gamma)\sigma(\gamma^{-1},h)=\sigma(\gamma^{-1}x^{-1},g)\sigma(\gamma^{-1}x^{-1}g,x\gamma)\bar{\sigma}(x^{-1},g)\bar\sigma(x^{-1}g,x).$$
\end{lemma}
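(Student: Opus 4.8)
The plan is to prove Lemma \ref{lem special 0} by direct manipulation in the twisted group algebra $\mathbb{C}^\sigma\Gamma$, converting the claimed scalar identity into an equality of products of basis elements and then equating coefficients. The left-hand side is precisely the scalar that arises when one rewrites $\overline{\gamma}^{-1}\,\overline{h}\,\overline{\gamma}$ in terms of $\overline{\gamma^{-1}h\gamma}$: indeed, using the multiplication rule twice,
$$\overline{\gamma}^{-1}\,\overline{h}\,\overline{\gamma}=\sigma(\gamma^{-1},h)\,\overline{\gamma^{-1}h}\cdot\overline{\gamma}=\sigma(\gamma^{-1},h)\sigma(\gamma^{-1}h,\gamma)\,\overline{\gamma^{-1}h\gamma},$$
so the left side of the lemma is the coefficient relating $\overline{\gamma}^{-1}\overline{h}\,\overline{\gamma}$ to $\overline{\gamma^{-1}h\gamma}$. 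The right-hand side should be recognized as the analogous coefficient obtained by first substituting $h=x^{-1}gx$ and expanding through the longer word $\gamma^{-1}x^{-1}g x\gamma$.

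Concretely, I would start from $\overline{h}=\overline{x^{-1}gx}$ and express this in terms of $\overline{x}^{-1}\overline{g}\,\overline{x}$: by Lemma \ref{lem equiv reg}'s computation (or directly), $\overline{x}^{-1}\overline{g}\,\overline{x}=\sigma(x^{-1},g)\sigma(x^{-1}g,x)\,\overline{x^{-1}gx}$, hence $\overline{h}=\bar\sigma(x^{-1},g)\bar\sigma(x^{-1}g,x)\,\overline{x}^{-1}\overline{g}\,\overline{x}$. Substituting into $\overline{\gamma}^{-1}\overline{h}\,\overline{\gamma}$ gives
$$\overline{\gamma}^{-1}\overline{h}\,\overline{\gamma}=\bar\sigma(x^{-1},g)\bar\sigma(x^{-1}g,x)\,\overline{\gamma}^{-1}\overline{x}^{-1}\overline{g}\,\overline{x}\,\overline{\gamma}=\bar\sigma(x^{-1},g)\bar\sigma(x^{-1}g,x)\,(\overline{x\gamma})^{-1}\overline{g}\,(\overline{x\gamma})$$
modulo the scalars relating $\overline{\gamma}^{-1}\overline{x}^{-1}$ to $\overline{(x\gamma)^{-1}}$ and $\overline{x}\,\overline{\gamma}$ to $\overline{x\gamma}$, which by Lemma \ref{lem invert and swap} combine trivially. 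Then expanding $(\overline{x\gamma})^{-1}\overline{g}\,(\overline{x\gamma})$ through the word $\gamma^{-1}x^{-1}g x\gamma$ produces exactly $\sigma(\gamma^{-1}x^{-1},g)\sigma(\gamma^{-1}x^{-1}g,x\gamma)$ as the coefficient of $\overline{\gamma^{-1}x^{-1}gx\gamma}=\overline{\gamma^{-1}h\gamma}$. Comparing the two expressions for $\overline{\gamma}^{-1}\overline{h}\,\overline{\gamma}$ and equating coefficients of $\overline{\gamma^{-1}h\gamma}$ yields the claimed identity.

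The only real bookkeeping obstacle is making sure that every application of the cocycle identity (Definition \ref{def multiplier}(i)) and every use of Lemma \ref{lem invert and swap} is applied to the correct pair of group elements, since the intermediate words $\gamma^{-1}x^{-1}$, $\gamma^{-1}x^{-1}g$, $x\gamma$ are long and it is easy to misattribute a factor; I expect no conceptual difficulty, just careful index-chasing. A clean way to organize this, and the route I would actually write up, is to avoid reassociating $\overline{x}\,\overline{\gamma}$ at all: instead directly compute both $\overline{\gamma}^{-1}\overline{h}\,\overline{\gamma}$ (two cocycle factors) and, separately, the product $\bar\sigma(x^{-1},g)\bar\sigma(x^{-1}g,x)\cdot\overline{\gamma^{-1}x^{-1}}\cdot\overline{g}\cdot\overline{x\gamma}$ with $\overline{h}$ substituted, expand each fully into a scalar times $\overline{\gamma^{-1}h\gamma}$, and equate. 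Note that $\sigma$-regularity of $g$ is not actually needed for this particular lemma — it is a purely formal cocycle identity valid for any multiplier and any $h$ conjugate to $g$ — so the hypothesis "$h=x^{-1}gx$" is used only to give meaning to the right-hand side.
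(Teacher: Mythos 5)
Your proposal is correct and is essentially identical to the paper's own proof: both compute $\bar\gamma^{-1}\bar h\bar\gamma$ in two ways — once directly as $\sigma(\gamma^{-1},h)\sigma(\gamma^{-1}h,\gamma)\overline{\gamma^{-1}h\gamma}$ and once by substituting $\bar h=\bar\sigma(x^{-1},g)\bar\sigma(x^{-1}g,x)\,\bar x^{-1}\bar g\bar x$ and using Lemma \ref{lem invert and swap} to identify $\bar\gamma^{-1}\bar x^{-1}\bar g\bar x\bar\gamma$ with $\overline{\gamma^{-1}x^{-1}}\,\bar g\,\overline{x\gamma}$ — and then equate coefficients. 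Your observation that $\sigma$-regularity plays no role here is also consistent with the paper, which does not invoke it in this lemma.
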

\begin{proof}
Observe that $\bar{\gamma}^{-1}\bar h\bar\gamma$ is equal to
\begin{align*}
\bar\gamma\overline{x^{-1}gx}\bar\gamma=\bar{\gamma}^{-1}\bar{x}^{-1}\bar{g}\bar{x}\bar\gamma\bar\sigma(x^{-1},g)\bar\sigma	(x^{-1}g,x).
\end{align*}
On the other hand, it is also equal to $\sigma(\gamma^{-1},h)\sigma(\gamma^{-1}h,\gamma)\overline{\gamma^{-1}h\gamma}$, which, can be written as
\begin{align*}
\sigma(\gamma^{-1},h)\sigma(\gamma^{-1}h,\gamma)\overline{\gamma^{-1}x^{-1}}\bar g\overline{x\gamma}\bar\sigma(\gamma^{-1}x^{-1},g)\bar\sigma(\gamma^{-1}x^{-1}g,x\gamma).
\end{align*}
By Lemma \ref{lem invert and swap}, we have $\bar\gamma^{-1}\bar x^{-1}\bar g\bar x\bar\gamma=\overline{\gamma^{-1}x^{-1}}\bar g\overline{x\gamma}$. We conclude by equating coefficients.
\end{proof}

\begin{lemma}
\label{lem special useful}
Let $(g)\subseteq\Gamma$ be a $\sigma$-regular conjugacy class for some multiplier $\sigma$. Then for any $h\in(g)$ and $\gamma\in\Gamma$, we have
$$\theta(\gamma^{-1} h\gamma)\cdot\bar\sigma(\gamma^{-1}h\gamma,\gamma^{-1})=\theta(h)\cdot\bar\sigma(\gamma^{-1},h).$$
\end{lemma}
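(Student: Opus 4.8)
The claim to prove is Lemma~\ref{lem special useful}: for $(g)$ a $\sigma$-regular conjugacy class, $h \in (g)$, and $\gamma \in \Gamma$,
$$\theta(\gamma^{-1} h\gamma)\cdot\bar\sigma(\gamma^{-1}h\gamma,\gamma^{-1})=\theta(h)\cdot\bar\sigma(\gamma^{-1},h).$$

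The plan is to expand both $\theta(h)$ and $\theta(\gamma^{-1}h\gamma)$ using the defining formula \eqref{eq theta}, choosing suitable elements $k$ realising $h$ and $\gamma^{-1}h\gamma$ as conjugates of $g$, then reduce the claimed identity to a statement purely about the multiplier $\sigma$, which one verifies using the cocycle identity in Definition~\ref{def multiplier}, Lemma~\ref{lem invert and swap}, and Lemma~\ref{lem special 0}. First I would write $h = x^{-1} g x$ for some $x \in \Gamma$, so that $\theta(h) = \sigma(g^{-1},x)\sigma(x^{-1},g^{-1}x)$ by \eqref{eq theta}. Then $\gamma^{-1}h\gamma = (x\gamma)^{-1} g (x\gamma)$, so applying \eqref{eq theta} with $k = x\gamma$ gives $\theta(\gamma^{-1}h\gamma) = \sigma(g^{-1},x\gamma)\sigma((x\gamma)^{-1}, g^{-1}x\gamma)$. (The well-definedness of $\theta$, already established, means this is independent of the choice of $x$, so no genericity issue arises.) Substituting both into the desired identity turns it into an identity among products of values of $\sigma$ on various words in $g, x, \gamma$ and their inverses.

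The main work is then the bookkeeping: I would systematically apply the $2$-cocycle identity to rewrite $\sigma(g^{-1}, x\gamma)$ and $\sigma((x\gamma)^{-1}, g^{-1}x\gamma)$ in terms of $\sigma(g^{-1},x)$, $\sigma(x^{-1},g^{-1}x)$, and ``correction'' factors involving $\gamma$, $\gamma^{-1}$, and $g^{-1}$. Lemma~\ref{lem special 0}, which already records the precise relation between $\sigma(\gamma^{-1}h,\gamma)\sigma(\gamma^{-1},h)$ and the conjugating data for $h = x^{-1}gx$, is essentially tailored for this: I expect that after expanding $\theta$ on both sides, the identity to be proved is equivalent (after moving the $\bar\sigma(\gamma^{-1}h\gamma,\gamma^{-1})$ and $\bar\sigma(\gamma^{-1},h)$ factors to the correct sides) to exactly the conclusion of Lemma~\ref{lem special 0}, possibly combined with one or two applications of Lemma~\ref{lem invert and swap} to flip inverted arguments. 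A clean way to organise this, avoiding hand computation with cocycle relations, is to work in the twisted group algebra: compute the element $\overline{\gamma^{-1}}\,\bar h\,\bar\gamma$ two ways (directly, and via $\bar\gamma^{-1}(\bar x^{-1}\bar g\bar x)\bar\gamma$ using $h = x^{-1}gx$, invoking $\sigma$-regularity of $g$ where a $z \in Z^g$ appears), equate the scalar coefficients of the basis element $\overline{\gamma^{-1}h\gamma} = \overline{(x\gamma)^{-1}g(x\gamma)}$, and read off the relation among $\sigma$-values; then multiply through by the appropriate $\theta$-factors.

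The main obstacle is purely organisational rather than conceptual: keeping track of the many multiplier factors without sign or inversion errors, and making sure each appeal to $\sigma$-regularity is legitimate (i.e.\ the relevant element genuinely lies in $Z^g$, which here it will, since the only place $g$ gets conjugated by something commuting with it is inside the identity $h = x^{-1}gx$ itself, handled as in the well-definedness proof of $\theta$). I do not anticipate needing any new ideas beyond Lemmas~\ref{lem invert and swap} and~\ref{lem special 0} and the cocycle identity; the proof should be a few lines of careful manipulation in $\mathbb{C}^\sigma\Gamma$ followed by equating coefficients.
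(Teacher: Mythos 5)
Your proposal is correct and follows essentially the same route as the paper: the paper first converts $\bar\sigma(\gamma^{-1}h\gamma,\gamma^{-1})$ into $\sigma(\gamma^{-1}h,\gamma)$ via Lemma~\ref{lem invert and swap} and the cocycle identity, then writes $h=k^{-1}gk$, expands both $\theta$-values from \eqref{eq theta} (with conjugators $k$ and $k\gamma$, exactly as you propose), applies Lemma~\ref{lem special 0}, and cancels everything by repeated use of Lemma~\ref{lem invert and swap}. Your observation that well-definedness of $\theta$ disposes of the choice of conjugator, and that the computation can be organised in $\mathbb{C}^\sigma\Gamma$, matches the paper's mechanics.
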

\begin{proof}
We have
\begin{align*}
\bar\sigma(\gamma^{-1}h\gamma,\gamma^{-1})&={\sigma}(\gamma,\gamma^{-1}h^{-1}\gamma)\\
&=\bar\sigma(\gamma^{-1},h^{-1}\gamma)\\
&=\sigma(\gamma^{-1}h,\gamma).
\end{align*}
The first and third equalities follow from Lemma \ref{lem invert and swap}; the second follows from the fact that $\sigma(\gamma,\gamma^{-1}h^{-1}\gamma)\sigma(\gamma^{-1},h^{-1}\gamma)=1$.
Thus it suffices to show that
\begin{equation}
\label{eq long}
\theta(\gamma^{-1}h\gamma)\bar\theta(h)\sigma(\gamma^{-1}h,\gamma)\sigma(\gamma^{-1},h)=1.
\end{equation}
For this, let $h=k^{-1}gk$. Then by Lemma \ref{lem special 0},
$$\sigma(\gamma^{-1}h,\gamma)\sigma(\gamma^{-1},h)=\sigma(\gamma^{-1}k^{-1},g)\sigma(\gamma^{-1}k^{-1}g,k\gamma)\bar{\sigma}(k^{-1},g)\bar\sigma(k^{-1}g,k).$$
It follows from the definition of $\theta$ that the left-hand side of \eqref{eq long} equals
\begin{multline*}
\sigma(g^{-1},k\gamma)\sigma(\gamma^{-1}k^{-1},g^{-1}k\gamma)\bar\sigma(g^{-1},k)\bar\sigma(k^{-1},g^{-1}k)\\\cdot\sigma(\gamma^{-1}k^{-1},g)\sigma(\gamma^{-1}k^{-1}g,k\gamma)\bar{\sigma}(k^{-1},g)\bar\sigma(k^{-1}g,k),
\end{multline*}
which equals $1$ by repeated applications of Lemma \ref{lem invert and swap}.
\end{proof}

With these preparations, we are now ready to prove that $\tr^{(g)}_\sigma$ is tracial and well-defined independently of the choice of functions used in Definition \ref{def g trace}.
\begin{proposition}
\label{prop trace properties}
	Suppose $(g)$ is a $\sigma$-regular conjugacy class with respect to the multiplier $\sigma$ defined by \eqref{eq sigma}. Then:
	\begin{enumerate}[(i)]
	\item $\textnormal{tr}^{(g)}_\sigma$ does not depend on the choices of $c_1$, $c_2$, and $c$ in Definition \ref{def g trace};
	\item if $S$ and $T$ are bounded $(\Gamma,\sigma)$-invariant operators on $L^2(\cS_{\sL})$ such that $ST$ and $TS$ are $(\sigma, g)$-trace class, then $\textnormal{tr}_\sigma^{(g)}(ST)=\textnormal{tr}_\sigma^{(g)}(TS)$.
	\end{enumerate}
\end{proposition}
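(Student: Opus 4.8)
The plan is to reduce both statements to direct computations with Schwartz kernels, using the integral formula of Lemma \ref{lem integral formula} as the workhorse. For part (i), independence of the choices of $c_1$ and $c_2$ (as opposed to the choice of cut-off $c=c_1c_2$) is the easier point: given two factorizations $c_1c_2=c=c_1'c_2'$ of the same cut-off function, one writes out $\operatorname{tr}(c_1 T_{h^{-1}}Sc_2)$ via the Schwartz kernel, obtaining an integral of $e^{-i\phi_h(x)}c_1(x)c_2(x)\operatorname{Tr}(h^{-1}K_S(hx,x))\,dx$, which visibly depends only on the product $c_1c_2$. So only independence of the cut-off function itself requires work.

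For that, I would combine Lemma \ref{lem useful integration properties}(i) with $\Gamma$-invariance of the integrand. The key observation is that the function $x\mapsto \sum_{h\in(g)}\theta(h)e^{-i\phi_h(x)}\operatorname{Tr}(h^{-1}K_S(hx,x))$ is $\Gamma$-invariant: under $x\mapsto \gamma x$, the sum over $(g)$ is reparametrized by $h\mapsto \gamma h\gamma^{-1}$, the kernel transforms by $K_S(\gamma a,\gamma b)=\gamma K_S(a,b)\gamma^{-1}$ (from $(\Gamma,\sigma)$-invariance of $S$, i.e. $T_\gamma S = S T_\gamma$, after unwinding the definition of $T_\gamma = U_\gamma S_\gamma$), the trace term $\operatorname{Tr}((\gamma h\gamma^{-1})^{-1}K_S(\gamma h x,\gamma x))$ simplifies by cyclicity of $\operatorname{Tr}$, and the accumulated phase factors from $S_\gamma$ together with the change in $\theta$ are controlled by Lemma \ref{lem special useful} (the identity $\theta(\gamma^{-1}h\gamma)\bar\sigma(\gamma^{-1}h\gamma,\gamma^{-1})=\theta(h)\bar\sigma(\gamma^{-1},h)$) and by \eqref{eq derivative of phi} relating $\phi_h$ along the $\Gamma$-orbit. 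Once $\Gamma$-invariance of this density is established, the last sentence of the proof of Lemma \ref{lem useful integration properties} — that $\int_M c(x)r(x)\,dx$ is independent of $c$ for $\Gamma$-invariant $r$ — immediately gives independence of the cut-off, proving (i).

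For part (ii), the standard strategy is: first prove it for operators of finite propagation, where all kernels are compactly supported after multiplication by $C_c$ functions and Fubini applies without qualms, then pass to the general $(\sigma,g)$-trace class case by an approximation/absolute-convergence argument. Concretely, using Remark \ref{rem finite prop trace} one writes $\operatorname{tr}^{(g)}_\sigma(ST)=\sum_{h\in(g)}\theta(h)\operatorname{tr}(cT_{h^{-1}}ST)$; inserting the kernels of $S$ and $T$, expanding $T_{h^{-1}}$, and comparing with $\operatorname{tr}^{(g)}_\sigma(TS)$, one must match $\sum_{h\in(g)}\theta(h)\int\!\!\int e^{-i\phi_h(x)}c(x)\operatorname{Tr}(h^{-1}K_S(hx,y)K_T(y,x))\,dy\,dx$ against the analogous expression with $S$ and $T$ swapped. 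Here one changes variables in the inner integral, uses Lemma \ref{lem useful integration properties}(ii) to move the cut-off from the $x$-slot to the $y$-slot, reindexes the sum over $(g)$, and invokes the cocycle lemmas (Lemma \ref{lem invert and swap}, Lemma \ref{lem special 0}, Lemma \ref{lem special useful}) together with the additivity property \eqref{cond 1}/\eqref{eq derivative of phi} of $\phi$ to reconcile the phase and weight factors.

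I expect the main obstacle to be bookkeeping in part (ii): one is juggling simultaneously the conjugation action on the index set $(g)$, the $\sigma$-twisting of the operators $T_\gamma$, the $\theta$-weights, and the $e^{-i\phi_h}$ phases, and it is easy to lose a factor. The cocycle identities needed are exactly those packaged in Lemmas \ref{lem special 0} and \ref{lem special useful} — which is presumably why they were isolated beforehand — so the work is to apply them in the right order rather than to discover anything new. A secondary technical point is justifying the interchange of summation over $(g)$ with integration in the general (non-finite-propagation) case; this follows from the absolute convergence built into the definition of $(\sigma,g)$-trace class, but should be stated carefully. Throughout, one should reduce to the special cut-off $\mathfrak c$ of \eqref{eq frak c} when invoking Lemma \ref{lem useful integration properties}, since that is where the cited results from \cite{Wang-Wang} apply directly, and then appeal to part (i) for the general cut-off.
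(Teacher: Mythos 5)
Your proposal is correct and follows essentially the same route as the paper: for (i) one shows the density $m_1(x)=\sum_{h\in(g)}\theta(h)e^{-i\phi_h(x)}\Tr(h^{-1}K_S(hx,x))$ is $\Gamma$-invariant via the reindexing $h\mapsto\gamma^{-1}h\gamma$, the kernel transformation law, and the phase identity packaged in Lemma \ref{lem special useful} together with \eqref{cond 1}, then applies Lemma \ref{lem useful integration properties}; for (ii) one passes to a diagonal-$\Gamma$-invariant two-variable density, changes variables $y\mapsto h^{-1}y$, and uses Lemma \ref{lem useful integration properties}(ii) with the same cocycle identities to reconcile weights and phases. The paper does not need your preliminary reduction to finite-propagation operators, since the absolute convergence in Definition \ref{def g trace} already justifies the interchange of sum and integral, but this is a harmless variation.
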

\begin{proof}
We begin with $(i)$. Let $S$ be a $(\sigma, g)$-trace class operator. By the formula in Lemma \ref{lem integral formula}, it is clear that for any cut-off function $c$, $\tr^{(g)}_\sigma(S)$ is independent of the choice of $c_1$ and $c_2$ such that $c_1 c_2=c$. Define the function $m_1\colon M\to\mathbb{C}$ by
$$m_1(x)\coloneqq\sum_{h\in(g)}\theta(h)\cdot e^{-i\phi_h(x)}\Tr(h^{-1}K_S(hx,x)),$$
so that
\begin{align*}
\textnormal{tr}^{(g)}_\sigma(S)&=\int_{M}c_1(x)m_1(x)c_2(x)\,dx,
\end{align*}
where $c_1,c_2\in C_c(M)$ such that $c_1 c_2=c$ for some cut-off function $c$. We will show that $m_1$ is $\Gamma$-invariant, whence by Lemma \ref{lem useful integration properties}, $\tr^{(g)}$ is independent of $c.$

By $(\Gamma,\sigma)$-invariance in Definition \ref{def kernels}, we see that for any $\gamma\in\Gamma$,
\begin{align}
\label{eq trace computation}
m_1(\gamma x)&=\sum_{h\in(g)}\theta(h)\cdot e^{-i\phi_h(\gamma x)}\Tr(h^{-1}K_S(h\gamma x,\gamma x))\nonumber\\
&=\sum_{h\in(g)}\theta(h)\cdot e^{-i\phi_h(\gamma x)}\Tr(h^{-1}e^{-i\phi_{\gamma^{-1}}(h\gamma x)}\gamma K_S(\gamma^{-1}h\gamma x,x)\gamma^{-1}e^{i\phi_{\gamma^{-1}}(\gamma x)})\nonumber\\
&=\sum_{h\in(g)}\theta(h)\cdot e^{-i\phi_h(\gamma x)}e^{-i\phi_{\gamma^{-1}}(h\gamma x)}\Tr((\gamma^{-1}h\gamma)^{-1}K_S(\gamma^{-1}h\gamma x,x)e^{i\phi_{\gamma^{-1}}(\gamma x)}).
\end{align}
We claim that the $\gamma^{-1}h\gamma$-summand of $m_1(x)$ equals the $h$-summand of $m_1(\gamma x)$. To prove this, it suffices to show that
\begin{equation}
\label{eq smallclaim}
\theta(\gamma^{-1}h\gamma)\cdot e^{-i(\phi_{\gamma^{-1}h\gamma}(x)+\phi_{\gamma^{-1}}(\gamma x))}=\theta(h)\cdot e^{-i(\phi_h(\gamma x)+\phi_{\gamma^{-1}}(h\gamma x))}.
\end{equation}	
Applying identity \eqref{cond 1} with $\gamma\to\gamma^{-1}$, $\gamma'\to\gamma^{-1}h\gamma$, and $x\to\gamma x$, one sees that the function
 $$x\mapsto\phi_{\gamma^{-1}h\gamma}(x)+\phi_{\gamma^{-1}}(\gamma x)-\phi_{\gamma^{-1}h}(\gamma x)$$
 is constant on $M$. Letting $x=x_0$, and using the definition of $\sigma$ in terms of $\phi$ given by \eqref{eq sigma} together with \eqref{cond 2}, we see that
$$\theta(\gamma^{-1} h\gamma)\cdot e^{-i(\phi_{\gamma^{-1}h\gamma}(x_0)+\phi_{\gamma^{-1}}(\gamma x_0)-\phi_{\gamma^{-1}h}(\gamma x_0))}=\theta(\gamma^{-1} h\gamma)\cdot\bar\sigma(\gamma^{-1},\gamma)\sigma(\gamma^{-1}h,\gamma).$$
We then have
 \begin{align*}
 \theta(\gamma^{-1} h\gamma)\cdot\bar\sigma(\gamma^{-1},\gamma)\sigma(\gamma^{-1}h,\gamma)&=\theta(\gamma^{-1} h\gamma)\cdot\bar\sigma(\gamma^{-1}h\gamma,\gamma^{-1})\\
 &=\theta(h)\bar\sigma(\gamma^{-1},h)\\
 &=\theta(h)\bar\sigma(h,\gamma)\bar\sigma(\gamma^{-1},h\gamma)\sigma(\gamma^{-1}h,\gamma)\\
 &=\theta(h)\cdot e^{-i(\phi_h(\gamma x_0)+\phi_{\gamma^{-1}}(h\gamma x_0)-\phi_{\gamma^{-1}h}(\gamma x_0))}.
 \end{align*}
The first equality follows from the cocycle condition. The second follows from Lemma \ref{lem special useful} and the third is again by the cocycle condition. The final equality follows from \eqref{eq sigma}.
 
Again by \eqref{cond 1}, this equals $\theta(h)\cdot e^{-i(\phi_h(\gamma x)+\phi_{\gamma^{-1}}(h\gamma x)-\phi_{\gamma^{-1}h}(\gamma x))}$. From this \eqref{eq smallclaim} follows. Summing over elements of $(g)$ then shows that $m_1$ is $\Gamma$-invariant, which completes the proof of (i).

For (ii), 
note that by Lemma \ref{lem integral formula},	
$$\textnormal{tr}^{(g)}_\sigma(ST)=\sum_{h\in(g)}\theta(h)\cdot \int_{M\times M} e^{-i\phi_h(x)}c(x)\textnormal{Tr}(h^{-1}K_S(hx,y)K_T(y,x))\,dx\,dy,$$
while
\begin{align}
\label{eq trace TS}
	\textnormal{tr}^{(g)}_\sigma(TS)&=\sum_{h\in(g)}\theta(h)\cdot \int_{M\times M} e^{-i\phi_h(y)}c(y)\textnormal{Tr}(h^{-1}K_T(hy,x)K_S(x,y))\,dx\,dy\nonumber\\
	&=\sum_{h\in(g)}\theta(h)\cdot \int_{M\times M} e^{-i\phi_h(h^{-1}y)}c(y)\textnormal{Tr}(h^{-1}K_T(y,x)K_S(x,h^{-1}y))\,dx\,dy,
\end{align}
where we have used the change of variable $y\mapsto h^{-1}y$ and the fact that $y\mapsto c(h^{-1}y)$ is a cut-off function for any $h\in\Gamma$.

Now define $m_2\colon M\times M\to\mathbb{C}$ by
$$m_2(x,y)=\sum_{h\in(g)}\theta(h)\cdot e^{-i\phi_h(x)}\textnormal{Tr}(h^{-1}K_S(hx,y)K_T(y,x))$$
so that
$$\textnormal{tr}^{(g)}_\sigma(ST)=\int_{M\times M}c(x)m_2(x,y)\,dx\,dy.$$
We claim that
\begin{enumerate}[(1)]
\item $m_2$ is $\Gamma$-equivariant for the diagonal action on $M\times M$;
\item $\textnormal{tr}^{(g)}_\sigma(TS)=\displaystyle\int_{M\times M}c(y)m_2(x,y)\,dx\,dy.$
\end{enumerate}
To prove claim (1), fix some $\gamma\in\Gamma$. By the change of variable $h\to\gamma^{-1}h\gamma$, we have
\begin{align*}
m_2(x,y)&=\sum_{\gamma^{-1}h\gamma\in(g)}\theta(\gamma^{-1}h\gamma)\cdot e^{-i\phi_{\gamma^{-1}h\gamma}(x)}\textnormal{Tr}((\gamma^{-1}h\gamma)^{-1}K_S(\gamma^{-1}h\gamma x,y)K_T(y,x)).
\end{align*}
Since $S$ and $T$ are $(\Gamma,\sigma)$-invariant, Lemma \ref{Gamma sigma invariant kernels} implies that $m_2(\gamma x,\gamma y)$ equals
\begin{align*}
&\sum_{h\in (g)}\theta(h)\cdot e^{-i\phi_h(\gamma x)}\textnormal{Tr}(h^{-1}K_S(h\gamma x,\gamma y)K_T(\gamma y,\gamma x))\\
=&\sum_{h\in (g)}\theta(h)\cdot e^{-i\phi_h(\gamma x)}\textnormal{Tr}(e^{-i\phi_{\gamma^{-1}}(h\gamma x)}h^{-1}\gamma K_S(\gamma^{-1}h\gamma x,y)K_T(y,x)\gamma^{-1}e^{i\phi_{\gamma^{-1}}(\gamma x)})\\
=&\sum_{h\in (g)}\theta(h)\cdot e^{-i(\phi_h(\gamma x)+\phi_{\gamma^{-1}}(h\gamma x)-\phi_{\gamma^{-1}}(\gamma x))}\textnormal{Tr}((\gamma^{-1}h\gamma)^{-1}K_S(\gamma^{-1}h\gamma x,y)K_T(y,x)).
\end{align*}
We claim that the $\gamma^{-1}h\gamma$-summand of $m_2(x,y)$ equals the $h$-summand of $m_2(\gamma x,\gamma y)$, that is:
\begin{equation}
\theta(\gamma^{-1}h\gamma)\cdot e^{-i(\phi_{\gamma^{-1}h\gamma}(x))}=\theta(h)\cdot e^{-i(\phi_h(\gamma x)+\phi_{\gamma^{-1}}(h\gamma x)-\phi_{\gamma^{-1}}(\gamma x))};
\end{equation}
but this is precisely what we have already established in \eqref{eq smallclaim}. Now summing over elements of $(g)$ proves claim (1).

Now for claim (2), let us denote the $h$-summand of $m_2(x,y)$ by $[m_2(x,y)]_h$. Then by \eqref{eq trace TS}, it suffices to show that for each $h\in(g)$, we have
\begin{equation}
\label{eq m h-component}
[m_2(x,y)]_h=\theta(h)\cdot e^{-i\phi_h(h^{-1}y)}\textnormal{Tr}(h^{-1}K_T(y,x)K_S(x,h^{-1}y)).
\end{equation}
To see this, note that by Lemma \ref{Gamma sigma invariant kernels}, we have
\begin{align*}
	\textnormal{Tr}(h^{-1}K_S(hx,y)K_T(y,x))&=\textnormal{Tr}(K_T(y,x)h^{-1}K_S(hx,y))\\
	&=\textnormal{Tr}(K_T(y,x)h^{-1}e^{-i\phi_{h^{-1}}(hx)}hK_S(x,h^{-1}y))h^{-1}e^{i\phi_{h^{-1}}(y)})\\
	&=\textnormal{Tr}(h^{-1}K_T(y,x)K_S(x,h^{-1}y))e^{-i(\phi_{h^{-1}}(hx)-\phi_{h^{-1}}(y))}.
\end{align*}
Using the definition of $m_2(x,y)$, this means that
$$[m_2(x,y)]_h=\theta(h)\cdot e^{-i(\phi_h(x)+\phi_{h^{-1}}(hx)-\phi_{h^{-1}}(y))}\textnormal{Tr}(h^{-1}K_T(y,x)K_S(x,h^{-1}y)).$$
Thus to establish \eqref{eq m h-component}, it remains to show that
$$\phi_h(h^{-1}y)+\phi_{h^{-1}}(y)=\phi_h(x)+\phi_{h^{-1}}(hx).$$
By the identity \eqref{cond 1}, the left-hand side equals $\phi_{hh^{-1}}(y)=\phi_e(y)$, while the right-hand side equals $\phi_{hh^{-1}}(hx)=\phi_e(hx)$. Thus equality follows from the normalisation condition $\phi_e\equiv 0$. This completes the proof of claim (2).
 
Finally, since $m_2$ is invariant under the diagonal $\Gamma$-action, we may apply Lemma \ref{lem useful integration properties} (ii) to $m_2$, and use claim (2), to obtain
\begin{align*}
\textnormal{tr}^{(g)}_\sigma(ST)&=\int_{M\times M}c(x)m_2(x,y)\,dx\,dy\\
&=\int_{M\times M}c(y)m_2(x,y)\,dx\,dy\\
&=\textnormal{tr}^{(g)}_\sigma(TS).\qedhere
\end{align*}
\end{proof}

To summarise the notation, we now have:
\begin{itemize}
	\item the trace $\tau^{(g)}_\sigma$ on $\C^\sigma\Gamma$,
	\item the trace $\Tr$ on $\End(\cS_{\sL})$;
	\item the traces $\tr$ and $\tr^{(g)}_\sigma$ on operators on $L^2(\cS_{\sL})$.
\end{itemize}

Recall that for any trace map $\mathscr{T}$ on operators on a Hilbert space $\mathscr{H}$, the associated \emph{supertrace} applied to an operator
\begin{equation}
\label{eqn graded operator}
	S=\begin{pmatrix}S_{11}&S_{12}\\S_{21}&S_{22}\end{pmatrix}
\end{equation}
on the $\Z_2$-graded space $\mathscr{H}\oplus\mathscr{H}$ is
$$
\mathscr{T}(S_{11})-\mathscr{T}(S_{22}).$$
The \emph{supercommutator} of homogeneous elements $S,T\in\cB(\mathscr{H}\oplus\mathscr{H})$ is
$$[S,T]_s\coloneqq ST-(-1)^{\textnormal{deg}\,S\cdot\textnormal{deg}\,T}TS.$$
This extends linearly to arbitrarily elements of $\mathcal{B}(\mathscr{H}\oplus\mathscr{H})$. It follows that the supertrace vanishes on supercommutators.
\begin{definition}
Denote by $\Str$, $\str$, and $\str^{(g)}_\sigma$ the supertraces associated to $\Tr$, $\tr$, and $\tr^{(g)}_\sigma$ respectively.
\end{definition}
Then Definition \ref{def g trace} generalises easily to:
\begin{definition} Let $S$ be a $\mathbb{Z}_2$-graded operator on $L^2(\cS_{\sL})$ written in the form \eqref{eqn graded operator}. If $S_{11}$ and $S_{22}$ are of $(\sigma,g)$-trace class, then we define the \emph{$\sigma$-weighted $(g)$-supertrace} of $S$ to be
\begin{equation}
\label{eq weighted supertrace}
\text{str}_{\sigma}^{(g)}(S)\coloneqq\sum_{h\in(g)}\theta(h)\cdot\text{str}(c_1T_{h^{-1}}Sc_2).
\end{equation}
\end{definition}
\begin{remark}
 	If $S$ has finite propagation, then \eqref{eq weighted supertrace} equals
	$$\sum_{h\in(g)}\theta(h)\cdot\text{str}(cT_{h^{-1}}S),$$
where $c$ is any cut-off function.
\end{remark}

Proposition \ref{prop trace properties} implies:
\begin{corollary}
\label{cor supertrace}
If $S$ and $T$ are $\mathbb{Z}_2$-graded $(\Gamma,\sigma)$-invariant operators on $L^2(\cS_{\sL})$ such that the diagonal entries of $ST$ and $TS$ are of $(\sigma,g)$-trace class, then
$$\str_{\sigma}^{(g)}[S,T]_s=0.$$
\end{corollary}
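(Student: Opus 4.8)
The statement is the familiar algebraic fact that a supertrace built from a trace annihilates supercommutators, adapted to the circumstance that $\tr^{(g)}_\sigma$ is only partially defined; the hypotheses on the diagonal entries of $ST$ and $TS$ are precisely what is needed to keep every quantity below in the domain of $\tr^{(g)}_\sigma$. By bilinearity of $[\,\cdot\,,\cdot\,]_s$ and linearity of $\str^{(g)}_\sigma$, I would first reduce to the case of homogeneous $S$ and $T$, of degrees $d_S,d_T\in\{0,1\}$.

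If $d_S\neq d_T$, then $[S,T]_s=ST-TS$ is odd, so its diagonal blocks vanish identically and $\str^{(g)}_\sigma[S,T]_s=0$ directly from the definition \eqref{eq weighted supertrace}. So suppose $d_S=d_T=d$, whence $[S,T]_s=ST-(-1)^{d}TS$ is even. Let $\epsilon$ be the grading operator on $L^2(\cS_\sL)$; since the $\Z_2$-grading of $\cS_\sL$ is $\Gamma$-equivariant, $\epsilon$ is even, bounded, and commutes with the projective action, so $\epsilon S$ is again $(\Gamma,\sigma)$-invariant. Straight from Definition \ref{def g trace} and \eqref{eq weighted supertrace} one has $\str^{(g)}_\sigma(A)=\tr^{(g)}_\sigma(\epsilon A)$ whenever the diagonal blocks of $A$ are of $(\sigma,g)$-trace class, and the diagonal blocks of $(\epsilon S)T=\epsilon(ST)$ and of $T(\epsilon S)=(-1)^{d}\epsilon(TS)$ agree up to sign with those of $ST$ and $TS$, hence are of $(\sigma,g)$-trace class by hypothesis. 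Proposition \ref{prop trace properties}(ii), applied to the pair $(\epsilon S,T)$, then gives
$$\tr^{(g)}_\sigma(\epsilon ST)=\tr^{(g)}_\sigma\big(T(\epsilon S)\big)=(-1)^{d}\,\tr^{(g)}_\sigma(\epsilon TS),$$
and therefore $\str^{(g)}_\sigma[S,T]_s=\tr^{(g)}_\sigma(\epsilon ST)-(-1)^{d}\tr^{(g)}_\sigma(\epsilon TS)=0$. (Equivalently one can dispense with $\epsilon$ and simply expand $ST$ and $TS$ in the block form \eqref{eqn graded operator}: each diagonal block of $ST$ equals a diagonal block of $TS$ after a cyclic swap of the two factors — e.g.\ $(ST)_{11}=S_{12}T_{21}$ and $(TS)_{22}=T_{21}S_{12}$ when $d=1$ — and Proposition \ref{prop trace properties}(ii) applied to each such pair makes the four terms of $\tr^{(g)}_\sigma(([S,T]_s)_{11})-\tr^{(g)}_\sigma(([S,T]_s)_{22})$ cancel in pairs; the sign $-(-1)^{d}$ in the supercommutator is exactly what makes the cancellation work when $d=1$.)

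The one point that genuinely needs care — and hence the main obstacle, such as it is — is the domain bookkeeping: checking that each block operator appearing above is $(\Gamma,\sigma)$-invariant, which follows from $(\Gamma,\sigma)$-invariance of $S$ and $T$ together with $\Gamma$-equivariance of the grading; and checking that finite sums of $(\sigma,g)$-trace class operators, together with the diagonal blocks of such operators, are again of $(\sigma,g)$-trace class, which is immediate from Definition \ref{def g trace} since trace-class membership and the absolute convergence in \eqref{eq g trace class} are stable under finite linear combinations and under composition with the bounded operators $\epsilon$ and the coordinate projections. No analytic input beyond Proposition \ref{prop trace properties} itself is needed.
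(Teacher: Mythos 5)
Your proof is correct and is essentially the paper's own argument written out in full: the paper simply asserts the corollary as an immediate consequence of the tracial property in Proposition \ref{prop trace properties}(ii), which is exactly the deduction you carry out (via the grading operator, or equivalently block by block), together with the routine domain bookkeeping. Nothing further is needed.
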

\begin{remark}
Suppose $g$ is an $\alpha$-regular element, with $\alpha$ as in \eqref{eq alpha}. Then the discussion in this subsection generalises easily to the family of multipliers $\{\sigma^s\}_{s\in\R}$ from \eqref{eq sigma}, once we replace the projective representation $T$ by $T^s$ from Definition \ref{def proj action}, the family $\phi$ by $s\phi$, and $\theta$ by $\theta^s$ from \eqref{eq theta s} in the proof of Corollary \ref{cor trace any s}.
\end{remark}

\vspace{0.1in}
\subsection{Twisted Roe algebras}
 	\hfill\vskip 0.05in

\noindent To formulate the higher index of projectively invariant operators, we will work with certain geometric $C^*$-algebras. The analogous construction in the non-twisted case is well-known; see for instance \cite{WillettYu}. Since the discussion applies to $\sigma^s$ for any $s$, let us fix $s=1$.
	\begin{definition}
	\label{def:suppprop}
		Let $A$ be an operator on $L^2(\cS_{\sL})$.
		\begin{itemize}[leftmargin=0.29in]
			\item $A$ is \textit{$(\Gamma,\sigma)$-equivariant} if 
			$$T_g^* A T_g=A$$ 
			for all $g\in\Gamma$, where $T_g$ is as in Definition \ref{def proj action};
			\item The \emph{support} of $A$, denoted $\textnormal{supp}(A)$, is the complement of all $(x,y)\in M\times M$ for which there exist $f_1,f_2\in C_0(M)$ such that $f_1(x)\neq 0$, $f_2(y)\neq 0$, and
			$$f_1Af_2=0;$$
			\item The \textit{propagation} of $A$ is the extended real number $$\textnormal{prop}(A)=\sup\{d(x,y)\,|\,(x,y)\in\textnormal{supp}(A)\},$$ where $d$ denotes the Riemannian distance on $M$;
			\item $A$ is \textit{locally compact} if $fA$ and $Af\in\mathcal{K}(L^2(\cS_{\sL}))$ for all $f\in C_0(M)$.
		\end{itemize}
		\end{definition}
\begin{definition}
\label{def twisted Roe}
		The \emph{$(\Gamma,\sigma)$-equivariant algebraic Roe algebra of $M$}, denoted by $\mathbb{C}[M;L^2(\cS_{\sL})]^{\Gamma,\sigma}$, is the $*$-subalgebra of $\mathcal{B}(L^2(\cS_{\sL}))$ consisting of $(\Gamma,\sigma)$-equivariant locally compact operators with finite propagation. 

			The \emph{$(\Gamma,\sigma)$-equivariant Roe algebra of $M$}, denoted by $C^*(M;L^2(\cS_{\sL}))^{\Gamma,\sigma}$, is the completion of $\mathbb{C}[M;L^2(\cS_{\sL})]^{\Gamma,\sigma}$ in $\mathcal{B}(L^2(\cS_{\sL}))$.
\end{definition}
\begin{definition}
\label{def kernels}
Consider the vector bundle $\Hom(\cS_{\sL})=\cS_{\sL}\boxtimes\cS_{\sL}\to M\times M$. Let $\mathscr{S}(M)^{\Gamma,\sigma}$ denote the convolution algebra of smooth sections $k$ of $\Hom(\cS_{\sL})$ such that
\begin{enumerate}[label=(\roman*), leftmargin=0.29in]
	\item $k$ is $(\Gamma,\sigma)$-invariant, in the sense that $$e^{-i\phi_\gamma(x)}\gamma^{-1}k_A(\gamma x,\gamma y)\gamma e^{i\phi_\gamma(y)}=k_A(x,y)$$
	for all $\gamma\in\Gamma$ and $x,y\in M$;
	\item $k$ has finite propagation, in the sense that there exists an $R>0$ such that $k(x,y)=0$ whenever $d(x,y)>R$.
\end{enumerate}
\end{definition}

An element $k\in\mathscr{S}(M)^{\Gamma,\sigma}$ acts on a section $u\in L^2(\cS_{\sL})$ by
\begin{equation}
\label{eq kernel formula}
(ku)(x)=\int_Mk(x,y)u(y)\,dy.
\end{equation}

\begin{lemma}
\label{Gamma sigma invariant kernels}
Suppose a $(\Gamma,\sigma)$-invariant operator $A$ on $L^2(\cS_\sL)$ with finite propagation has a smooth Schwartz kernel $k_A$. Then $k_A\in\mathscr{S}(M)^{\Gamma,\sigma}$.
\end{lemma}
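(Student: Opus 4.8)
The plan is to verify directly the two conditions defining $\mathscr{S}(M)^{\Gamma,\sigma}$ in Definition \ref{def kernels}, since the smoothness of the section $k_A$ is already part of the hypothesis. Condition (ii), finite propagation of $k_A$, is a routine reformulation of $\propagation(A)<\infty$; condition (i), the twisted $(\Gamma,\sigma)$-invariance of $k_A$, is obtained by translating the operator identity $T_\gamma^* A T_\gamma = A$ into an identity of Schwartz kernels.

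\textbf{Finite propagation.} Set $R=\propagation(A)<\infty$. I would show that $k_A(x,y)=0$ whenever $d(x,y)>R$. For such a pair $(x,y)$ we have $(x,y)\notin\supp(A)$, so by Definition \ref{def:suppprop} there exist $f_1,f_2\in C_0(M)$ with $f_1(x)\neq 0$, $f_2(y)\neq 0$ and $f_1 A f_2 = 0$. The Schwartz kernel of $f_1 A f_2$ is $(x',y')\mapsto f_1(x')\,k_A(x',y')\,f_2(y')$; since this operator vanishes and its kernel is continuous, it vanishes identically, and evaluation at $(x,y)$ forces $k_A(x,y)=0$ because $f_1(x)f_2(y)\neq 0$. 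Hence $k_A$ is supported in $\{d(x,y)\le R\}$, giving (ii).

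\textbf{Twisted equivariance.} Since each $T_\gamma$ is unitary, the $(\Gamma,\sigma)$-equivariance $T_\gamma^* A T_\gamma = A$ is equivalent to $A T_\gamma = T_\gamma A$ for all $\gamma\in\Gamma$. Using the explicit formula $T_\gamma = U_\gamma\circ S_\gamma$ from Definition \ref{def proj action}, i.e. $(T_\gamma u)(z)= e^{i\phi_\gamma(\gamma^{-1}z)}\,\gamma\,u(\gamma^{-1}z)$, and the fact that $\Gamma$ acts by isometries so that the change of variables $z\mapsto\gamma z$ preserves the Riemannian measure, I would compute the Schwartz kernels of both composites. A direct calculation gives that the kernel of $A T_\gamma$ at $(x,y)$ is $k_A(x,\gamma y)\,e^{i\phi_\gamma(y)}\,\gamma$, while the kernel of $T_\gamma A$ at $(x,y)$ is $e^{i\phi_\gamma(\gamma^{-1}x)}\,\gamma\,k_A(\gamma^{-1}x,y)$. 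Equating these (legitimate by uniqueness of smooth Schwartz kernels) and replacing $x$ by $\gamma x$ yields
\[
k_A(\gamma x,\gamma y)\,e^{i\phi_\gamma(y)}\,\gamma = e^{i\phi_\gamma(x)}\,\gamma\,k_A(x,y),
\]
which rearranges to $k_A(x,y)= e^{-i\phi_\gamma(x)}\,\gamma^{-1}\,k_A(\gamma x,\gamma y)\,e^{i\phi_\gamma(y)}\,\gamma$; as $e^{i\phi_\gamma(y)}$ is a scalar it commutes with the bundle map $\gamma$, so this is exactly condition (i) of Definition \ref{def kernels}. Together with (ii) and the assumed smoothness, this shows $k_A\in\mathscr{S}(M)^{\Gamma,\sigma}$.

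\textbf{Main obstacle.} There is no conceptual difficulty here; the only real care needed is bookkeeping, namely tracking which fibre the bundle isomorphism $\gamma\colon\cS_{\sL}\to\cS_{\sL}$ acts on at each stage, keeping the scalar factors $e^{\pm i\phi_\gamma}$ in their correct positions, and justifying the passage from the operator identities to pointwise kernel identities via uniqueness of smooth Schwartz kernels (and continuity in the support argument). These are the steps most prone to sign or ordering errors.
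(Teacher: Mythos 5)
Your proposal is correct and follows essentially the same route as the paper: the twisted equivariance of $k_A$ is obtained by unwinding the operator identity $T_\gamma^*AT_\gamma=A$ into a Schwartz-kernel identity via the explicit formula $T_\gamma=U_\gamma\circ S_\gamma$ (the paper conjugates step by step where you equate the kernels of $AT_\gamma$ and $T_\gamma A$, but the computation is the same), and the finite-propagation claim, which the paper simply asserts, is the routine support argument you spell out. Your fibre bookkeeping and the final rearrangement to condition (i) of Definition \ref{def kernels} check out.
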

\begin{proof}
The finite-propagation property for $k_A$ follows from the fact that $A$ has finite propagation.
The assumption $T_{\gamma}^*AT_{\gamma}=A$, together with the fact that $T_\gamma^*=T_\gamma^{-1}$, implies that
\begin{align*}
k_A(x,y)&=k_{T_{\gamma}^{-1}AT_{\gamma}}(x,y)\\
&=k_{S_{\gamma}^{-1}U_{\gamma}^{-1}AU_{\gamma}S_{\gamma}}(x,y)\\
&=e^{-i\phi_\gamma(x)}k_{U_{\gamma}^{-1}AU_{\gamma}}(x,y)e^{i\phi_\gamma(y)}\\
&=e^{-i\phi_\gamma(x)}\gamma^{-1}k_A(\gamma x,\gamma y)\gamma e^{i\phi_\gamma(y)}.\qedhere
\end{align*}
\end{proof}
Conversely, an element of $\mathscr{S}(M)^{\Gamma,\sigma}$ defines an element of $\mathbb{C}[M;L^2(\cS_{\sL})]^{\Gamma,\sigma}$ via the action \eqref{eq kernel formula}.

		

Let $\mathcal{M}\subseteq\cB(L^2(\cS_{\sL}))$ denote the multiplier algebra of the $(\Gamma,\sigma)$-equivariant Roe algebra, and write $\mathcal{Q}=\mathcal{M}/C^*(M;L^2(\cS_{\sL}))^{\Gamma,\sigma}$. We have a short exact sequence of $C^*$-algebras
	\begin{equation}
	\label{eq SES Roe}
	0\rightarrow C^*(M;L^2(\cS_{\sL}))^{\Gamma,\sigma}\rightarrow\mathcal{M}\rightarrow\mathcal{Q}\rightarrow 0.
	\end{equation}

\vspace{0.1in}
	\subsection{The twisted higher index}
	\label{subsec twisted higher index}
 	\hfill\vskip 0.05in
 	\noindent We discuss the index map first in a more general context. Recall that associated to any short exact sequence of $C^*$-algebras
	$$0\rightarrow I\rightarrow A\rightarrow A/I\rightarrow 0,$$
	is a cyclic exact sequence in $K$-theory:
	\[
	\begin{tikzcd}
	K_0(I) \ar{r} & K_0(A) \ar{r} & K_0(A/I) \ar{d}{\partial_1} \\
	K_1(A/I) \ar{u}{\partial_0} & K_1(A) \ar{l} & K_1(I) \ar{l},
	\end{tikzcd}
	\]
	where the connecting maps $\partial_0$ and $\partial_1$ are defined as follows.
	\begin{definition}
		\label{def connectingmaps}
		\hfill
		\begin{enumerate}[label=(\roman*), leftmargin=0.29in]
			\item $\partial_0$: let $u$ be an invertible matrix with entires in $A/I$ representing a class in $K_1(A/I)$.
			Write
			\[w=
			\begin{pmatrix}
			0&-u^{-1}\\u&0	
			\end{pmatrix}=
			\begin{pmatrix} 
			1 & 0\\ 
			u & 1
			\end{pmatrix}
			\begin{pmatrix} 
			1 & -u^{-1}\\ 
			0 & 1
			\end{pmatrix}
			\begin{pmatrix} 
			1 & 0\\ 
			u & 1
			\end{pmatrix}.
			\]
			Then $w$ lifts to an invertible matrix $W$ with entries in $A$	. Then
			$$
			P=W\begin{pmatrix}
			1 & 0\\ 
			0 & 0
			\end{pmatrix}W^{-1}
			$$
			is an idempotent, and we define
			\begin{equation}
			\label{eq even index}
			\partial_0[u]\coloneqq 
			\left[P
			\right]-
			\begin{bmatrix}
			0 & 0\\
			0 & 1
			\end{bmatrix}
			\in K_0(I).
			\end{equation}

			\item $\partial_1$: let $q$ be an idempotent matrix with entries in $A/I$ representing a class in $K_0(A/I)$. Let $Q$ be a lift of $q$ to a matrix algebra over $A$. Then we define
			\begin{equation}
			\label{eq odd index}
			\partial_1[q]\coloneqq\left[e^{2\pi iQ}\right]\in K_1(I).
			\end{equation}
		\end{enumerate}
	\end{definition}
	
Now let $M$ be a Riemannian spin manifold on which $\Gamma$ acts properly and isometrically, respecting the spin structure. Let $\sigma$ be a multiplier on $\Gamma$. Let $\sL\to M$ be a trivial line bundle, and let $D$ be the twisted Dirac operator as in \eqref{eq twisted Dirac} acting on smooth sections of $\cS_\sL$. Pick any \emph{normalizing function} $\chi\colon\mathbb{R}\rightarrow\mathbb{R}$, i.e. a continuous, odd function such that 
	$$\lim_{x\rightarrow +\infty}\chi(x)=1,$$
and form the bounded self-adjoint operator $\chi(D)$ on $L^2(\cS_{\sL})$. When $\dim M$ is even, $\cS_{\sL}$ is naturally a direct sum $(\cS^+\otimes\sL)\oplus (\cS^-\otimes\sL)$, and $D$ and $\chi(D)$ are odd-graded:
$$D=\begin{pmatrix}0&D_-\\D_+&0\end{pmatrix},\qquad\chi(D)=\begin{pmatrix}0&\chi(D)_-\\\chi(D)_+&0\end{pmatrix}.$$
 
	\begin{proposition}
		The class of $\chi(D)$ in $\mathcal{M}/C^*(M;L^2(\cS_{\sL}))^{\Gamma,\sigma}$ is invertible and independent of the choice of $\chi$, and $\frac{\chi(D)+1}{2}$ is an idempotent modulo $C^*(M;L^2(\cS_{\sL}))^{\Gamma,\sigma}$.
	\end{proposition}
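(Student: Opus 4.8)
The plan is to reduce all three assertions to two facts: (a) $f(D)\in C^*(M;L^2(\cS_{\sL}))^{\Gamma,\sigma}$ for every $f\in C_0(\R)$; and (b) $\chi(D)$ lies in the multiplier algebra $\mathcal{M}$ appearing in \eqref{eq SES Roe}. Granting these, the rest is elementary functional calculus. Write $\pi\colon\mathcal{M}\to\mathcal{Q}$ for the quotient map. Since $\chi$ is a normalizing function, $\chi^2-1\in C_0(\R)$, so by (a) $\chi(D)^2-1=(\chi^2-1)(D)\in C^*(M;L^2(\cS_{\sL}))^{\Gamma,\sigma}$, i.e.\ $\pi(\chi(D))^2=1$; as $\chi(D)$ is self-adjoint, $\pi(\chi(D))$ is a self-adjoint involution in $\mathcal{Q}$, hence invertible. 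For another normalizing function $\chi'$ one has $\chi-\chi'\in C_0(\R)$, whence $\chi(D)-\chi'(D)\in C^*(M;L^2(\cS_{\sL}))^{\Gamma,\sigma}$ and $\pi(\chi(D))=\pi(\chi'(D))$. Finally $\big(\tfrac{\chi(D)+1}{2}\big)^2-\tfrac{\chi(D)+1}{2}=\tfrac14\big(\chi(D)^2-1\big)\in C^*(M;L^2(\cS_{\sL}))^{\Gamma,\sigma}$, which is the idempotency claim.

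To establish (a) I would verify the conditions of Definition \ref{def:suppprop} together with norm-closure, following the classical arguments (see \cite{WillettYu}) but now in the projective setting. Equivariance is immediate: $D$ commutes with the projective representation $T^s$ (as recorded just after Definition \ref{def projective Dirac}), so by functional calculus $f(D)$ does too, i.e.\ $T_\gamma^*f(D)T_\gamma=f(D)$. Local compactness, $\varphi f(D)\in\cK(L^2(\cS_{\sL}))$ for $\varphi\in C_0(M)$, follows from ellipticity of the Dirac operator: $\varphi(1+D^2)^{-1}$ is compact by Rellich's lemma, and one extends to arbitrary $f\in C_0(\R)$ by uniform approximation and density. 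For finite propagation I would use that $D$ is of Dirac type, hence has unit propagation speed, so $e^{itD}$ has propagation $\le|t|$; approximating $f$ uniformly by Schwartz functions $f_n$ with $\widehat{f_n}$ compactly supported, the operators $f_n(D)=\tfrac{1}{2\pi}\int\widehat{f_n}(t)\,e^{itD}\,dt$ have finite propagation, so lie in $\mathbb{C}[M;L^2(\cS_{\sL})]^{\Gamma,\sigma}$, and $f_n(D)\to f(D)$ in norm.

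For (b) I would first exhibit one normalizing function $\theta$ whose distributional Fourier transform has compact support (a suitable normalization of the sine-integral function works), so that $\theta(D)$ has finite propagation; it is bounded and $(\Gamma,\sigma)$-equivariant, though not locally compact. The key observation is then that any bounded $(\Gamma,\sigma)$-equivariant finite-propagation operator $B$ multiplies the Roe algebra into itself: for $A\in\mathbb{C}[M;L^2(\cS_{\sL})]^{\Gamma,\sigma}$ the products $BA$ and $AB$ are $(\Gamma,\sigma)$-equivariant with finite propagation, and they are locally compact since, for $\varphi\in C_c(M)$, finite propagation of $B$ gives $\varphi B=\varphi B\psi$ for some $\psi\in C_c(M)$ (here one uses completeness of the metric, so that closed bounded sets are compact), reducing local compactness of $\varphi BA$ and $AB\varphi$ to that of $\psi A$ and $A\varphi$; density then gives $B\in\mathcal{M}$. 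Applying this to $\theta(D)$, and using that $\chi-\theta\in C_0(\R)$ together with (a), we conclude $\chi(D)=\theta(D)+(\chi-\theta)(D)\in\mathcal{M}+C^*(M;L^2(\cS_{\sL}))^{\Gamma,\sigma}=\mathcal{M}$.

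I expect the only real work to be analytic rather than algebraic: the unit-propagation-speed estimate for the half-wave group $e^{itD}$ of the twisted Dirac operator, and, in the non-cocompact case, the completeness hypothesis needed to control supports in the multiplier argument. Compatibility with the projective action is automatic, precisely because $D$ commutes with $T^s$ exactly — not merely up to the multiplier $\sigma^s$ — so $f(D)$ and $\chi(D)$ are genuinely $(\Gamma,\sigma)$-equivariant in the sense of Definition \ref{def:suppprop}.
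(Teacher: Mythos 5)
Your argument is correct and follows essentially the same route as the paper: reduce everything to showing $f(D)\in C^*(M;L^2(\cS_{\sL}))^{\Gamma,\sigma}$ for $f\in C_0(\mathbb{R})$, and prove this by approximating $f$ by Schwartz functions with compactly supported Fourier transform, whose images under the functional calculus have finite propagation and $(\Gamma,\sigma)$-invariant smooth kernels. The only addition is your step (b) verifying $\chi(D)\in\mathcal{M}$, which the paper leaves implicit; your finite-propagation multiplier argument for it is standard and correct.
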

	\begin{proof} 
	Since $\chi^2-1\in C_0(\mathbb{R})$, it suffices to show that for any $f\in C_0(\mathbb{R})$, we have $f(D)\in C^*(M;L^2(\cS_{\sL}))^{\Gamma,\sigma}$. In fact, functions in the Schwartz algebra $\cS(\mathbb{R})$ with compactly supported Fourier transform form a dense subset of $C_0(\mathbb{R})$, we may assume that $f$ is such a function. In that case, the operator $f(D)$ is given by a smooth $(\Gamma,\sigma)$-invariant Schwartz kernel, and hence an element of $\mathscr{S}(M)^{\Gamma,\sigma}$ (see Definition \ref{def kernels}). It follows that $f(D)\in C^*(M;L^2(\cS_{\sL}))^{\Gamma,\sigma}$. Finally, since the difference of any two normalizing functions lies in $C_0(\mathbb{R})$, the class of $\chi(D)$ does not depend on the choice of normalising function $\chi$.
	\end{proof}

Applying Definition \ref{def connectingmaps} to the short exact sequence \eqref{eq SES Roe} leads to the following:
\begin{definition}
\label{def higher index}
			For $i=0,1$, let $\partial_i$ be the connecting maps from Definition \ref{def connectingmaps}. The \emph{$(\Gamma,\sigma)$-invariant higher index of $D$ on $L^2(\cS_{\sL})$} is the element
			\begin{empheq}[left={\Ind_{\Gamma,\sigma,L^2} (D)\coloneqq
\empheqlbrace}]{alignat*=2}
    \partial_{0}\left[\chi(D)_+\right]\in K_{0}\big(C^*(M;L^2(\cS_{\mathscr{L}}))^{\Gamma,\sigma}\big)\quad&\textnormal{ if $\dim M$ is even},\\[1.5ex]
    \partial_{1}\left[\tfrac{\chi(D)+1}{2}\right]\in K_{1}\big(C^*(M;L^2(\cS_{\sL}))^{\Gamma,\sigma}\big)\quad&\textnormal{ if $\dim M$ is odd}.
\end{empheq}
\end{definition}

From this, we can obtain an index in the $K$-theory of $C^*_r(\Gamma,\sigma)$ as follows. Define 
\begin{align}
\label{eq j}
j_\sigma\colon C_c(\cS_{\sL})&\hookrightarrow\mathbb{C}^\sigma[\Gamma]\otimes C_c(\cS_{\sL})\nonumber\\
(j_\sigma e)_\gamma&=c\cdot(T_{g^{-1}}e),
\end{align}
where $T$ is the $(\Gamma,\sigma)$-representation on $L^2(\cS_\sL)$.
Then $j_\sigma$ extends to an isometry
$$j_\sigma\colon L^2(\cS_\sL)\hookrightarrow l^2(\Gamma)\otimes L^2(\cS_\sL).$$
The following is easily proved:
\begin{lemma}
The map $j_\sigma$ is equivariant with respect to the projective representation $T$ on $L^2(\cS_\sL)$ and $T^L\otimes 1$, where $T^L$ is the left-regular $(\Gamma,\sigma)$-representation on $l^2(\Gamma)$, as in subsection \ref{subsec twisted algebras}.
\end{lemma}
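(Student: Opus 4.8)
The plan is to check the intertwining identity $j_\sigma\circ T_h=(T^L_h\otimes 1)\circ j_\sigma$ on the dense subspace $C_c(\cS_\sL)$ and then extend it to all of $L^2(\cS_\sL)$ by continuity, using that $j_\sigma$ is an isometry and that $T_h$ and $T^L_h\otimes 1$ are unitary. Throughout I regard an element $\xi$ of $l^2(\Gamma)\otimes L^2(\cS_\sL)$ as a square-summable family $(\xi_\gamma)_{\gamma\in\Gamma}$ of sections, so that $j_\sigma$ is given componentwise by $(j_\sigma e)_\gamma=c\cdot T_{\gamma^{-1}}e$.

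First I would compute the left-hand side: using the projective relation $T_\gamma T_{\gamma'}=\sigma(\gamma,\gamma')T_{\gamma\gamma'}$ from Definition \ref{def proj action}, one gets $(j_\sigma(T_h e))_\gamma=c\cdot T_{\gamma^{-1}}T_h e=\sigma(\gamma^{-1},h)\,c\cdot T_{\gamma^{-1}h}e$. For the right-hand side I would unwind the definition of the left-regular projective representation $T^L=\lambda$ from subsection \ref{subsec twisted algebras}, namely $\lambda_h\delta_\mu=\sigma(h,h^{-1}\mu)\delta_{h\mu}$; reindexing the resulting sum gives $\big((T^L_h\otimes 1)(j_\sigma e)\big)_\gamma=\sigma(h,h^{-1}\gamma)\,(j_\sigma e)_{h^{-1}\gamma}=\sigma(h,h^{-1}\gamma)\,c\cdot T_{\gamma^{-1}h}e$, using $(h^{-1}\gamma)^{-1}=\gamma^{-1}h$. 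Comparing the two, the lemma reduces to the purely group-cohomological identity $\sigma(\gamma^{-1},h)=\sigma(h,h^{-1}\gamma)$ for all $\gamma,h\in\Gamma$.

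To establish this identity I would: apply Lemma \ref{lem invert and swap} to the pair $(h,h^{-1}\gamma)$ to write $\sigma(h,h^{-1}\gamma)=\bar\sigma(\gamma^{-1}h,h^{-1})$; apply the cocycle identity of Definition \ref{def multiplier}(i) to $(\gamma^{-1},h,h^{-1})$ to get $\sigma(\gamma^{-1},h)\sigma(\gamma^{-1}h,h^{-1})=\sigma(\gamma^{-1},e)\sigma(h,h^{-1})$; and then invoke the normalisation $\sigma(h,h^{-1})=1$ from Definition \ref{def multiplier}(ii) together with $\sigma(\gamma^{-1},e)=1$ (which follows from (ii) and the cocycle identity) to conclude $\sigma(\gamma^{-1},h)=\bar\sigma(\gamma^{-1}h,h^{-1})=\sigma(h,h^{-1}\gamma)$. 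There is no genuine obstacle here -- the entire content is bookkeeping with the multiplier $\sigma$ -- and the only point requiring mild care is the index substitution when transporting $T^L$ through the identification $l^2(\Gamma)\otimes L^2(\cS_\sL)\cong\bigoplus_{\gamma}L^2(\cS_\sL)$. The identical argument applies to $\sigma^s$ and $T^s$ for every $s\in\R$.
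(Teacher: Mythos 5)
Your proof is correct, and since the paper offers no argument for this lemma (it is stated as ``easily proved''), the direct componentwise verification you give is exactly what is needed: the identity reduces to $T_{\gamma^{-1}}T_h=\sigma(\gamma^{-1},h)T_{\gamma^{-1}h}$ together with the cocycle identity $\sigma(\gamma^{-1},h)=\sigma(h,h^{-1}\gamma)$, and your derivation of the latter from Lemma \ref{lem invert and swap}, the cocycle relation applied to $(\gamma^{-1},h,h^{-1})$, and the normalisations $\sigma(h,h^{-1})=\sigma(\gamma^{-1},e)=1$ is sound. One point worth making explicit: the componentwise formula you use for the left-regular representation, $(\lambda_h\xi)(\gamma)=\sigma(h,h^{-1}\gamma)\,\xi(h^{-1}\gamma)$, corresponds to $\lambda_h\delta_k=\sigma(h,k)\delta_{hk}$, not to the paper's displayed $\lambda_{\gamma_1}\delta_{\gamma_2}=\sigma(\gamma_1,\gamma_1^{-1}\gamma_2)\delta_{\gamma_1\gamma_2}$; the latter, read literally, does not satisfy $\lambda_{\gamma_1}\lambda_{\gamma_2}=\sigma(\gamma_1,\gamma_2)\lambda_{\gamma_1\gamma_2}$ (test it at $\delta_e$) and appears to conflate the delta-function and function-evaluation conventions. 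With your (correct) reading the lemma holds; with the paper's literal formula the required identity would become $\sigma(\gamma^{-1},h)=\sigma(h,h^{-2}\gamma)$, which is false in general, so you were right to use the standard convention.
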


At the level of operators, the map $j_\sigma$ induces two inclusion maps
\begin{align*}
	\oplus\,0,\,\oplus\,1\colon\mathcal{B}(L^2(\cS_\sL))&\hookrightarrow\mathcal{B}(l^2(\Gamma)\otimes L^2(\cS_\sL)),\end{align*}
defined by first identifying operators on $L^2(\cS_\sL)$ with operators on $j(L^2(\cS_\sL))$ via conjugation by $j$ and then extending them by the zero operator and identity operator, respectively, on the orthogonal complement of $j(L^2(\cS_\sL))$. We will write $T\oplus i\coloneqq(\oplus\,i)(T)$, $i=0,1$.

These maps restrict to inclusions
\begin{align*}
	\oplus\,0\colon C^*(M;L^2(\cS_\sL))^{\Gamma,\sigma}&\to C^*_r(\Gamma,\sigma)\otimes\mathcal{K}(L^2(\cS_\sL)),\\
	\oplus\,1\colon (C^*(M;L^2(\cS_\sL))^{\Gamma,\sigma})^+&\to (C^*_r(\Gamma,\sigma)\otimes\mathcal{K}(L^2(\cS_\sL)))^+.
\end{align*}
For each $i=0,1$, the map $\oplus \,i$ extends in the obvious way to maps between matrix algebras over $C^*(M;L^2(\cS_\sL))^{\Gamma,\sigma}$ and its unitisation, preserving idempotence and invertibility when $i=0$ and $1$ respectively. We get induced maps
$$(\oplus\,i)_*\colon K_i(C^*(M;L^2(\cS_\sL))^{\Gamma,\sigma})\to K_i(C^*_r(\Gamma,\sigma)\otimes\mathcal{K}(L^2(\cS_\sL)))\cong K_i(C^*_r(\Gamma,\sigma)).$$
This gives an index
\begin{equation}
\label{eq c star g index}
	\Ind_{{\Gamma,\sigma}}(D)\coloneqq(\oplus\,i)_*\Ind_{{\Gamma,\sigma,L^2}}(D)\in K_i(C^*_r(\Gamma,\sigma)),
\end{equation}
where $i=\dim M$ (mod $2$).
\begin{remark}
When $\sigma\equiv 1$, \eqref{eq c star g index} recovers the usual $\Gamma$-equivariant higher index of Dirac operators on cocompact manifolds.
\end{remark}
\hfill\vskip 0.3in
\section{Projective PSC obstructions on cocompact manifolds}
\label{sec cocompact obstr}
In this section we prove Theorem \ref{thm main1}. Thus let $\Gamma$, $M$, $\alpha$, $\sigma$, and $g$ be given as in the statement of the theorem. 


We begin by describing how the trace $\tau^{(g)}_\sigma$ from Definition \ref{def weighted trace} can be extended to a trace on a smooth dense subalgebra of $C^*(\Gamma,\sigma)\otimes\mathcal{\cK}(L^2(\cS_\sL))$. Our construction is based on \cite[section 6]{CM90}, \cite[subsection 2.2]{XY2}, and \cite[section 3]{KMS}, but some extra arguments are needed to make the adaptation to the proper action case, having to do with the form of the embedding $j_\sigma$ in \eqref{eq j}.

Let $D$ and $\cS_\sL$ be as in Definition \ref{def projective Dirac}, with the underlying manifold $M$ being cocompact with respect to the action of a finitely generated group $\Gamma$. We will work with the following choice of orthonormal basis of $L^2(\cS_\sL)$. Let $c$ be the cut-off function for the $\Gamma$-action on $M$ used in the embedding $j$ from \eqref{eq j}. Since the support of $c$ is compact, there exists an orthonormal basis $B_0=\{u_i\}_{i\in\N}$
 of $L^2(\cS_\sL)|_N$, for some relatively compact neighbourhood $N$ of $\text{supp}(c)$ consisting of eigenfunctions of $D^2|_N$. Asymptotically, the eigenvalues $\lambda_i$ satisfy $\lambda_i\sim i^q$ for some constant $q>0$. Identifying $L^2(\cS_\sL)|_N$ with $l^2(\N)$ via this basis, one checks that a smoothing operator supported in $N$ lies in the algebra
 $$\sR=\{(a_{ij})_{i,j\in\N}\colon\sup_{i,j}i^k j^l|a_{ij}|<\infty\textnormal{ for all }k,l\in\N\}$$
of matrices with rapidly decreasing entries \cite[chapter 3]{ConnesNCG}, which are dense inside $\cK(l^2(\N))$. Complete $B_0$ to an orthonormal basis $B=\{v_i\}_{i\in\N}$ of $L^2(\cS_\sL)$, and embed $l^2(\N)$ inside itself via the map $\delta_k\mapsto\delta_{2k}$. Since this map preserves $\sR$, the previous identification now extends to an identification of $L^2(\cS_\sL)$ with the new copy of $l^2(\N)$ in such a way that for any $S\in\sS^{\Gamma,\sigma}$ (see Definition \ref{def kernels}), the operator $c^{\frac{1}{2}}Sc^{\frac{1}{2}}$ is an element of $\sR$. Let
\begin{equation}
\label{eq I}
I=\{i\in\N\colon v_i\in B_0\}.	
\end{equation}

Fix a set of generators of $\Gamma$, and let $\ell(\gamma)=d_\Gamma(\gamma,e)$ denote the associated word length function, where $d_\Gamma$ is the word metric. Let $\cDD$ be the unbounded self-adjoint operator on $l^2(\Gamma)$ given by $\cDD\bar\gamma=\ell(\gamma)\cdot\bar\gamma$, where the notation $\bar\gamma$ is as explained in subsection \ref{subsec twisted algebras}. Let $\Delta$ be the unbounded self-adjoint operator on $l^2(\N)$ given by
\begin{equation}
\label{eq delta}
\Delta(\delta_i)=
\begin{cases}
\lambda_i\delta_i&\textnormal{ if }i\in I,\\
i^\alpha\delta_i&\textnormal{ otherwise,}
\end{cases}
\end{equation}
for $i\in\N$, where $\lambda_i$ is the $i^{\textnormal{th}}$ eigenvalue of $D^2|_N$, and the set $I$ is as in \eqref{eq I}. 

Consider also the unbounded operators $\partial=[\cDD,\,\cdot\,]$ on $\cB(l^2(\N))$ and $\widetilde{\partial}=[\cDD\otimes 1,\,\cdot\,]$ on $\cB(l^2(\Gamma)\otimes l^2(\N))$. Note that $\widetilde{\partial}$ is a closed derivation with domain $\textnormal{Dom}(\widetilde{\partial})$, which consists of all elements $a\in\cB(l^2(\Gamma)\otimes l^2(\N))$ such that $a$ maps $\textnormal{Dom}(\cDD\otimes 1)$ to itself, and the operator $\widetilde{\partial}(a)=(\cDD\otimes 1)\circ a-a\circ(\cDD\otimes 1)$, defined initially on $\textnormal{Dom}(\cDD\otimes 1)$, extends to a bounded operator on $l^2(\Gamma)\otimes l^2(\N)$. Define
$$\sB_\infty(\Gamma,\sigma)=\bigcap_{k\in\N}\textnormal{Dom}(\widetilde{\partial}^k)\cap(C^*_r(\Gamma,\sigma)\otimes\mathcal{K}),$$
and
\begin{equation}
\label{eq B}
\sB(\Gamma,\sigma)=\{a\in\sB_\infty(\Gamma,\sigma)\colon\widetilde{\partial}^k(a)\circ(1\otimes\Delta)^2\textnormal{ is bounded }\forall k\in N\}.
\end{equation}
Note that $\sB(\Gamma,\sigma)$ is a left ideal as well as a Fre\'echet subalgebra of $\sB_\infty(\Gamma,\sigma)$, with respect to the Fr\'echet topology given by the family of norms
$$\norm{a}_n=\sum_{k=0}^\infty\frac{1}{k!}\norm{\widetilde{\partial}^k(a)\circ(1\otimes\Delta)^2},$$
for $n\in\N$, where the norm on the right-hand side is the operator norm.

Define the von Neumann algebra
\begin{equation}
\label{eq sA}
\sA(\Gamma,\sigma)=\{a\in\cB(l^2(\Gamma)\otimes l^2(\N))\colon[\bar\gamma\otimes 1,a]=0\,\,\,\forall\gamma\in\Gamma\}.	
\end{equation}
By \cite[Lemmas 1.1, 3.2]{KMS}, we have the following useful fact:
\begin{lemma}
\label{lem B criterion}
Any $a\in\sA(\Gamma,\sigma)$ can be written as a strongly convergent sum
$$a=\sum_{\gamma\in\Gamma}\bar\gamma\otimes a_\gamma,$$
where $a_\gamma\in\cB(l^2(\N))$ for each $\gamma\in\Gamma$. If $a_\gamma\in\sR$ for each $\gamma\in\Gamma$ and
$$\sum_{\gamma}\ell(\gamma)^k\norm{a_\gamma\circ\Delta}<\infty,$$
for all $k\geq 0$, then $a\in\sB(\Gamma,\sigma)$.
\end{lemma}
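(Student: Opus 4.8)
The plan is to treat the two assertions of the lemma separately. The decomposition is the statement that $\sA(\Gamma,\sigma)$, being by definition the commutant of $\{\bar\gamma\otimes 1:\gamma\in\Gamma\}$ in $\cB(l^2(\Gamma)\otimes l^2(\N))$, is a ``twisted group--von Neumann algebra'' and hence carries a Fourier expansion; this is \cite[Lemma 1.1]{KMS}, and I would check that its proof carries over to the twisted setting. Concretely, given $a\in\sA(\Gamma,\sigma)$, let $P_h$ be the rank-one projection of $l^2(\Gamma)$ onto $\C\delta_h$ and set $a_\gamma\coloneqq(P_\gamma\otimes 1)\,a\,(P_e\otimes 1)\in\cB(l^2(\N))$, i.e.\ $a_\gamma\xi$ is the $\delta_\gamma$-component of $a(\delta_e\otimes\xi)$; orthogonality of $\{\delta_\gamma\}$ gives $\sum_\gamma\norm{a_\gamma\xi}^2=\norm{a(\delta_e\otimes\xi)}^2$, whence $\norm{a_\gamma}\le\norm{a}$. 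The normalisation $\sigma(\mu,\mu^{-1})=1$ of Definition \ref{def multiplier}(ii) yields $\bar\mu\,\delta_e=\delta_\mu$, and then the relation $[\bar\mu\otimes 1,a]=0$ forces the action of $a$ on the total set $\{\delta_h\otimes\xi\}$ to be reproduced by $\sum_{\gamma}\bar\gamma\otimes a_\gamma$; the expansion $a=\sum_\gamma\bar\gamma\otimes a_\gamma$ then holds in the strong operator topology. This part I expect to be routine.

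For the second assertion, suppose $a_\gamma\in\sR$ for all $\gamma$ and $\sum_\gamma\ell(\gamma)^k\norm{a_\gamma\circ\Delta}<\infty$ for every $k\ge 0$. We may assume $\Delta\ge 1$ (replacing $\Delta$ by $\Delta+1$, which changes neither $\sB(\Gamma,\sigma)$ nor the hypothesis), so $\norm{a_\gamma}\le\norm{a_\gamma\circ\Delta}$ and hence $\sum_\gamma\ell(\gamma)^k\norm{a_\gamma}<\infty$ for all $k$. The case $k=0$ shows that $\sum_\gamma\bar\gamma\otimes a_\gamma$ converges in operator norm, and since each $a_\gamma\in\sR\subseteq\cK(l^2(\N))$ this already places $a$ in $C^*_r(\Gamma,\sigma)\otimes\cK$. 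The one genuine computation is the word-length bound for the derivation: from the twisted left-regular representation one gets $[\cDD,\bar\gamma]\,\delta_h=\big(\ell(\gamma h)-\ell(h)\big)\,\bar\gamma\,\delta_h$, and since $|\ell(\gamma h)-\ell(h)|\le\ell(\gamma)$ by subadditivity and symmetry of $\ell$, iterating gives $\norm{(\ad_{\cDD})^k(\bar\gamma)}\le\ell(\gamma)^k$ and hence $\norm{\widetilde\partial^k(\bar\gamma\otimes a_\gamma)}=\norm{(\ad_\cDD)^k(\bar\gamma)}\,\norm{a_\gamma}\le\ell(\gamma)^k\norm{a_\gamma}$. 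Therefore $\sum_\gamma\widetilde\partial^k(\bar\gamma\otimes a_\gamma)$ converges in norm for each $k$; since $\widetilde\partial$ is closed, iterating shows $a$ lies in the domain of $\widetilde\partial^k$ for all $k$, with $\widetilde\partial^k(a)=\sum_\gamma(\ad_\cDD)^k(\bar\gamma)\otimes a_\gamma$, so $a\in\sB_\infty(\Gamma,\sigma)$.

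The remaining point is to show that $\widetilde\partial^k(a)\circ(1\otimes\Delta)^2$ is bounded for every $k$, and moreover with norms controlled well enough in $k$ that the Fr\'echet seminorms $\norm{a}_n$ are finite; this is where I expect the main obstacle to lie. Using the expansion of $\widetilde\partial^k(a)$, one has $\widetilde\partial^k(a)\circ(1\otimes\Delta)^2=\sum_\gamma(\ad_\cDD)^k(\bar\gamma)\otimes\big(a_\gamma\circ\Delta^2\big)$, so the required estimate reduces to a bound of the form $\sum_\gamma\ell(\gamma)^k\norm{a_\gamma\circ\Delta^2}<\infty$, with appropriate uniformity in $k$. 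The delicate feature is that the hypothesis directly controls only $\norm{a_\gamma\circ\Delta}$, whereas $\sB(\Gamma,\sigma)$ is defined through $(1\otimes\Delta)^2$; the passage between the two powers of $\Delta$ is exactly where the rapid-decay condition $a_\gamma\in\sR$ must be exploited — noting that $a_\gamma\circ\Delta$ is again rapidly decreasing, one argues as in \cite[Lemma 3.2]{KMS} to pass from control of $\norm{a_\gamma\circ\Delta}$ to control of $\norm{a_\gamma\circ\Delta^2}$ and of the higher norms $\norm{\widetilde\partial^k(a)\circ(1\otimes\Delta)^2}$. Once this bookkeeping is carried out, all the defining conditions of $\sB(\Gamma,\sigma)$ are met, so $a\in\sB(\Gamma,\sigma)$, completing the proof.
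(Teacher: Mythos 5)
The paper offers no proof of this lemma at all --- it is imported wholesale from \cite[Lemmas 1.1, 3.2]{KMS} --- so your outline is already more explicit than the text, and your strategy (Fourier decomposition from the commutation relation, then termwise estimates on $\widetilde\partial^k$ using $|\ell(\gamma h)-\ell(h)|\le\ell(\gamma)$) is certainly the intended one. One correction to the decomposition step: the commutant of the \emph{left} regular representation is spanned by the \emph{right} regular representation. With $a_\gamma$ defined as the $\delta_\gamma$-component of $a(\delta_e\otimes\,\cdot\,)$, the relation $[\lambda_h\otimes 1,a]=0$ yields $a(\delta_h\otimes\xi)=\sum_\gamma\sigma(h,\gamma)\,\delta_{h\gamma}\otimes a_\gamma\xi$, which after reindexing involves $a_{h^{-1}\mu}$, whereas $\sum_\gamma(\lambda_\gamma\otimes a_\gamma)(\delta_h\otimes\xi)$ involves $a_{\mu h^{-1}}$; these agree only for abelian $\Gamma$. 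So either $\sA(\Gamma,\sigma)$ must be read as the commutant of the right regular representation (surely the intention, since $\sB_\infty(\Gamma,\sigma)$ sits inside $C^*_r(\Gamma,\sigma)\otimes\cK$, which is generated by left translations), or the expansion must be written with right translations. This imprecision is inherited from \eqref{eq sA}, but your verification as written does not close.

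The genuine gap is the step you yourself call ``the main obstacle'' and then defer to \cite[Lemma 3.2]{KMS}: passing from the hypothesis $\sum_\gamma\ell(\gamma)^k\norm{a_\gamma\circ\Delta}<\infty$ to boundedness of $\widetilde\partial^k(a)\circ(1\otimes\Delta)^2$. Since this is the entire non-routine content of the lemma, asserting it by reference is not a proof; worse, no argument can supply it from the stated hypothesis. Take $a_\gamma=c_\gamma\,e_{1,n_\gamma}$ (a scalar multiple of a matrix unit, hence in $\sR$) with the $n_\gamma$ distinct, $\Delta_{n_\gamma n_\gamma}\geq e^{2\lambda\ell(\gamma)}$, and $c_\gamma=e^{-\lambda\ell(\gamma)}\Delta_{n_\gamma n_\gamma}^{-1}$ for $\lambda$ large enough that $\sum_\gamma\ell(\gamma)^k e^{-\lambda\ell(\gamma)}<\infty$. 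Then $\norm{a_\gamma\circ\Delta}=e^{-\lambda\ell(\gamma)}$ satisfies the hypothesis for every $k$, yet $a\circ(1\otimes\Delta)^2$ applied to $\delta_e\otimes\delta_{n_{\gamma_0}}$ has norm at least $e^{\lambda\ell(\gamma_0)}$, so already the $k=0$ condition in \eqref{eq B} fails. The point is that there is a mismatch of one power of $\Delta$ between the hypothesis of the lemma and the definition \eqref{eq B} of $\sB(\Gamma,\sigma)$; the hypothesis should presumably read $\norm{a_\gamma\circ\Delta^2}$, after which your termwise bound $\norm{\widetilde\partial^k(\bar\gamma\otimes a_\gamma)\circ(1\otimes\Delta)^2}\leq\ell(\gamma)^k\norm{a_\gamma\circ\Delta^2}$ does finish the argument. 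A correct proof has to surface and resolve this discrepancy rather than route around it by citation.
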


\begin{lemma}
\label{lem stable under hfc}
	The $*$-algebra $\sB(\Gamma,\sigma)$ is dense in $C^*_r(\Gamma,\sigma)\otimes\mathcal{K}(l^2(\N))$ and stable under the holomorphic functional calculus.
\end{lemma}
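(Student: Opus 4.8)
The plan is to establish the two assertions separately: density via an explicit dense subalgebra, and stability under holomorphic functional calculus via spectral invariance, bootstrapped from the corresponding property of $\sB_\infty(\Gamma,\sigma)$.

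\emph{Density.} First I would check that every element of the form $\bar\gamma\otimes a$ with $\gamma\in\Gamma$ and $a\in\sR$ lies in $\sB(\Gamma,\sigma)$. Such an element is supported at a single group element, so the sum in the hypothesis of Lemma \ref{lem B criterion} collapses to the single term $\ell(\gamma)^k\norm{a\circ\Delta}$; this is finite because $\Delta$ is a positive diagonal operator of at most polynomial growth (by the eigenvalue asymptotics $\lambda_i\sim i^q$ and the formula \eqref{eq delta}), while $a\in\sR$ has superpolynomially decaying matrix entries, so $a\circ\Delta$ again lies in $\sR$ and is in particular bounded. By Lemma \ref{lem B criterion}, finite linear combinations of such elements lie in $\sB(\Gamma,\sigma)$; since $\sR$ is dense in $\cK(l^2(\N))$ and the finitely supported elements are dense in $C^*_r(\Gamma,\sigma)$, these combinations form a dense $*$-subalgebra of $C^*_r(\Gamma,\sigma)\otimes\cK(l^2(\N))$.

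\emph{Stability under holomorphic functional calculus.} It suffices to show that $\sB(\Gamma,\sigma)$ is spectral invariant in $C^*_r(\Gamma,\sigma)\otimes\cK(l^2(\N))$, and likewise after amplification by $M_n(\C)$; with the Fréchet structure from \eqref{eq B}, this yields stability under holomorphic functional calculus. The starting observation is that $\sB_\infty(\Gamma,\sigma)$ is already spectral invariant: as the smooth domain $\bigcap_k\textnormal{Dom}(\widetilde{\partial}^k)$ of the closed $*$-derivation $\widetilde{\partial}$ intersected with $C^*_r(\Gamma,\sigma)\otimes\cK$, it is holomorphically closed by the classical argument that differentiates $x x^{-1}=1$ to get $\widetilde{\partial}(x^{-1})=-x^{-1}\widetilde{\partial}(x)x^{-1}$ and, by iterated Leibniz, $x^{-1}\in\bigcap_k\textnormal{Dom}(\widetilde{\partial}^k)$ whenever $x$ is invertible and smooth (compare \cite[Lemmas 1.1, 3.2]{KMS}). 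Now suppose $x=\lambda\cdot 1+a\in\sB(\Gamma,\sigma)^+$, with $\lambda\neq 0$ and $a\in\sB(\Gamma,\sigma)$, is invertible in $(C^*_r(\Gamma,\sigma)\otimes\cK)^+$, and write $x^{-1}=\lambda^{-1}\cdot 1+b$. Since $\sB(\Gamma,\sigma)\subseteq\sB_\infty(\Gamma,\sigma)$, spectral invariance of $\sB_\infty(\Gamma,\sigma)$ gives $x^{-1}\in\sB_\infty(\Gamma,\sigma)^+$; and from $x^{-1}x=1$ one computes $b=-\lambda^{-1}x^{-1}a$, which lies in $\sB_\infty(\Gamma,\sigma)^+\cdot\sB(\Gamma,\sigma)\subseteq\sB(\Gamma,\sigma)$ because $\sB(\Gamma,\sigma)$ is a left ideal in $\sB_\infty(\Gamma,\sigma)$. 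Hence $x^{-1}\in\sB(\Gamma,\sigma)^+$. The matrix case is identical, since $M_n(\sB(\Gamma,\sigma))$ is a left ideal in $M_n(\sB_\infty(\Gamma,\sigma))$ and the latter is the smooth domain of the amplified derivation.

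\emph{Main obstacle.} The substantive input is the spectral invariance of $\sB_\infty(\Gamma,\sigma)$ — equivalently, that the smooth domain of $\widetilde{\partial}$ is holomorphically closed inside the non-unital algebra $C^*_r(\Gamma,\sigma)\otimes\cK$ — which requires care in working inside $\cB(l^2(\Gamma)\otimes l^2(\N))$ and in exploiting the closedness of $\widetilde{\partial}$; this is precisely the part imported from \cite{KMS}. Granting that, the remaining steps are routine, the only delicate point being the comparison of the polynomial growth of $\Delta$ against the superpolynomial decay of elements of $\sR$ used to place the algebraic elements in $\sB(\Gamma,\sigma)$.
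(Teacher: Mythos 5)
Your proposal is correct and follows essentially the same route as the paper: density is obtained by checking that the algebraic tensor product $\mathbb{C}^\sigma\Gamma\otimes\sR$ lies in $\sB(\Gamma,\sigma)$, and holomorphic closure is deduced from spectral invariance of $\sB_\infty(\Gamma,\sigma)$ (the smooth domain of the closed derivation $\widetilde{\partial}$, for which the paper cites Ji) together with the fact that $\sB(\Gamma,\sigma)$ is a left ideal and a Fr\'echet subalgebra, via Schweitzer's equivalence. Your explicit computation $b=-\lambda^{-1}x^{-1}a$ just spells out the ``purely algebraic fact'' the paper invokes without proof.
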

\begin{proof}
First note that if $a=\bar{g}\otimes V\in\C^\sigma\Gamma\otimes\sR$, then
\begin{equation}
\label{eq B contains R}
\widetilde{\partial}^k(A)\circ(1\otimes\Delta)^2=\partial^k(\bar g)\otimes V\Delta^2,
\end{equation}
which is bounded since both $\partial^k(\bar g)$ and $V\Delta^2$ are bounded. It follows that $\sB(\Gamma,\sigma)$ is dense in $C^*_r(\Gamma,\sigma)\otimes\mathcal{K}(l^2(\N))$. 

Now recall that a subalgebra $A$ of a Banach algebra $B$ is called spectral invariant in $B$ if the invertible elements of the unitisation $A^+$ are precisely those which are invertible in $B^+$. In the case that $A$ is a Frechet subalgebra of of $B$, $A$ is spectral invariant if and only if $A$ is stable under the holomorphic functional calculus in $B$, by \cite{Schweitzer}. Further, it is a purely algebraic fact that if $I$ is a left ideal in $A$, then $I$ is itself a spectral invariant subalgebra of $B$.
 
	By \cite[Theorem 1.2]{Ji}, $\sB_\infty(\Gamma,\sigma)$ is dense in $C^*_r(\Gamma,\sigma)\otimes\mathcal{K}(l^2(\N))$ and closed under the holomorphic functional calculus, and hence a spectral invariant subalgebra of $C^*_r(\Gamma,\sigma)\otimes\mathcal{K}(l^2(\N))$. Since $\sB(\Gamma,\sigma)$ is a left ideal in $\sB_\infty(\Gamma,\sigma)$ as well as a Fr\'echet subalgebra, it follows from the above discussion that $\sB(\Gamma,\sigma)$ is holomorphically closed.
\end{proof}

Let $\Tr$ denote the operator trace on $\sR$, and denote by $\tau^{(g)}_\sigma\otimes\Tr$ the amplified trace on $\C\Gamma\otimes\sR$.
Recall that a conjugacy class $(g)$ is said to have \emph{polynomial growth} if there exist constants $C$ and $d$ such that the number of elements $h\in (g)$ such that $\ell(h)\leq l$ is at most $Cl^d$.
\begin{lemma}
\label{lem poly growth}
Let $g\in\Gamma$ be a $\sigma$-regular element with respect to a multiplier $\sigma$. If the conjugacy class $(g)$ has polynomial growth, then $\tau^{(g)}_\sigma\otimes\Tr$ extends to a continuous trace on $\sB(\Gamma,\sigma)$.
\end{lemma}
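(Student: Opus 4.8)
\emph{Plan.} I would define $\tau^{(g)}_\sigma\otimes\Tr$ on all of $\sB(\Gamma,\sigma)$ by the series that represents it on the dense subalgebra $\C^\sigma\Gamma\otimes\sR$, prove a single-seminorm continuity bound for that series using the polynomial growth of $(g)$, and then deduce the tracial property by continuity from Proposition \ref{prop special trace}. Concretely: by Lemma \ref{lem B criterion} every $a\in\sB(\Gamma,\sigma)$ is a strongly convergent sum $a=\sum_{\gamma\in\Gamma}\bar\gamma\otimes a_\gamma$ with $a_\gamma\in\sR$; since elements of $\sR$ are smoothing, each $a_\gamma$ is trace class, and I set
$$\big(\tau^{(g)}_\sigma\otimes\Tr\big)(a)\coloneqq\sum_{\gamma\in(g)}\theta(\gamma)\,\Tr(a_\gamma),$$
which visibly restricts to the amplified trace on $\C^\sigma\Gamma\otimes\sR$. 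Here $\sigma$-regularity of $g$ is used, so that $\theta$ is well defined and $\tau^{(g)}_\sigma$ is a trace on $\C^\sigma\Gamma$ by Propositions \ref{prop consistent trace} and \ref{prop special trace}.

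\emph{The continuity estimate (the crux).} Three ingredients combine. First, a uniform trace bound: writing $\Tr(a_\gamma)=\Tr\big((a_\gamma\Delta^2)\Delta^{-2}\big)$ and using that a fixed power of $\Delta^{-1}$ is of trace class (the $i\notin I$ part by choice of exponent in \eqref{eq delta}, the $i\in I$ part because smoothing operators on the relatively compact region $N$ are trace class), one gets $|\Tr(a_\gamma)|\le C\,\|a_\gamma\circ\Delta^2\|$ with $C$ independent of $\gamma$. Second, a word-length decay bound: the compression $a\mapsto a_\gamma$ extracting the $\gamma$-th Fourier coefficient is norm non-increasing, even after right-multiplication by $1\otimes\Delta^2$, so applying it to $\widetilde\partial^k(a)=\sum_\gamma\ell(\gamma)^k\,\bar\gamma\otimes a_\gamma$ yields
$$\ell(\gamma)^k\,\|a_\gamma\circ\Delta^2\|\ \le\ \|\widetilde\partial^k(a)\circ(1\otimes\Delta)^2\|\ \le\ k!\,\|a\|_n\qquad(k\le n).$$
Third, polynomial growth: if $(g)$ has growth exponent $d$, a dyadic decomposition of $(g)$ by word length gives $\sum_{\gamma\in(g)}(1+\ell(\gamma))^{-(d+2)}<\infty$. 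Taking $k=d+2$ and combining,
$$\sum_{\gamma\in(g)}|\theta(\gamma)|\,|\Tr(a_\gamma)|\ \le\ C'\,\|a\|_{d+2}\sum_{\gamma\in(g)}(1+\ell(\gamma))^{-(d+2)}\ <\ \infty,$$
so the defining series converges absolutely, $\tau^{(g)}_\sigma\otimes\Tr$ is continuous on $\sB(\Gamma,\sigma)$, and the estimate also justifies the interchanges of summation needed below.

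\emph{The tracial property.} Granting continuity, this is formal. By density of $\C^\sigma\Gamma\otimes\sR$ (Lemma \ref{lem stable under hfc}) it suffices to check $\big(\tau^{(g)}_\sigma\otimes\Tr\big)(xy)=\big(\tau^{(g)}_\sigma\otimes\Tr\big)(yx)$ for $x=\bar{g}_1\otimes V_1$ and $y=\bar{g}_2\otimes V_2$. Since $xy=\sigma(g_1,g_2)\,\overline{g_1g_2}\otimes V_1V_2$, since $g_1g_2$ and $g_2g_1$ are conjugate, and since $\Tr(V_1V_2)=\Tr(V_2V_1)$, the claim reduces to $\sigma(g_1,g_2)\theta(g_1g_2)=\sigma(g_2,g_1)\theta(g_2g_1)$ (with $\theta$ read as $0$ off $(g)$), that is, to $\tau^{(g)}_\sigma(\bar{g}_1\bar{g}_2)=\tau^{(g)}_\sigma(\bar{g}_2\bar{g}_1)$, which is exactly Proposition \ref{prop special trace}. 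Bilinearity and continuity then give the general case.

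\emph{Main obstacle.} The substance is entirely in the continuity estimate: extracting from membership in $\sB(\Gamma,\sigma)$ a bound on $\sum_{\gamma\in(g)}|\Tr(a_\gamma)|$ by a single Fr\'echet seminorm. This requires using the two commuting regularising mechanisms in tandem --- the derivation $\widetilde\partial$ built from the word-length operator $\cDD$, which produces decay in $\gamma$, and the operator $\Delta$ built from $D^2|_N$, which makes each $a_\gamma$ trace class with a $\gamma$-uniform estimate --- and it is here that the polynomial growth of $(g)$ is indispensable, since it is what controls the number of class elements of each word length.
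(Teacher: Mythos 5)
Your proposal is correct and follows essentially the same route as the paper: the paper likewise defines the extension by the weighted sum of diagonal Fourier coefficients, $t(\mathcal{A})=\sum_{i}\sum_{h\in(g)}\theta(h)\mathcal{A}_{ii,h}$, and obtains absolute convergence from the polynomial growth of $(g)$ together with the rapid-decay estimates coming from the seminorms of $\sB(\Gamma,\sigma)$, except that it outsources the quantitative estimate to \cite[Lemma 2.7]{XY2} where you write it out (the chain $|\Tr(a_\gamma)|\le C\|a_\gamma\circ\Delta^2\|$, then $\ell(\gamma)^k\|a_\gamma\circ\Delta^2\|\le\|\widetilde\partial^k(a)\circ(1\otimes\Delta)^2\|$, then summability over $(g)$). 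The only point worth tightening is the justification that the relevant negative power of $\Delta$ on the $i\in I$ part is trace class — this follows from Weyl asymptotics for $D^2|_N$ rather than from smoothing operators being trace class — but this is exactly the hypothesis the paper itself imports from \cite{XY2}.
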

\begin{proof}
This follows from an adaptation of the proof of \cite[Lemma 2.7]{XY2}. Indeed, let $\mathcal{A}=(\mathcal{A}_{ij})_{i,j\in\N}$ be an arbitrary element of $\sB(\Gamma,\sigma)$, where $\mathcal{A}_{ij}\in C^*_r(\Gamma,\sigma)$ can be written as
\begin{equation}
\label{eq a}
\mathcal{A}_{ij}=\sum_{\gamma\in\Gamma}\mathcal{A}_{ij,\gamma}\bar\gamma.
\end{equation}
Define the function $t\colon\sB(\Gamma,\sigma)\to\C$ by
\begin{equation}
\label{eq t}
t(\mathcal{A})=\sum_{i\in\N}\sum_{h\in(g)}\theta(h)\mathcal{A}_{ii,h}.
\end{equation}
It follows from Definition \ref{def weighted trace} that $t$ agrees with $\tau^{(g)}_\sigma\otimes\Tr$ on $\C\Gamma\otimes\sR$. Now since $\theta(\gamma)\in\U(1)$, the right-hand side of \eqref{eq t} converges absolutely by the same estimates as in the proof of \cite[Lemma 2.7]{XY2}, hence $t$ extends to a continuous trace on $\sB(\Gamma,\sigma)$.
\end{proof}

We will continue to write $\tau^{(g)}_\sigma\otimes\Tr$ for the extended trace $t$ on $\sB(\Gamma,\sigma)$ from the proof of Lemma \ref{lem poly growth}. It follows from and Lemmas \ref{lem stable under hfc} and \ref{lem poly growth} that we have an induced map
\begin{equation}
\label{eq tau induced}
(\tau^{(g)}_\sigma)_*\colon K_0(C^*_r(\Gamma,\sigma)\otimes\cK(L^2(\cS_\sL)))\cong K_0(\sB(\Gamma,\sigma))\to\C.
\end{equation}
We are now ready to establish the connection between $\tau^{(g)}_\sigma$ and the trace $\tr_\sigma^{(g)}$ from section \ref{sec weighted traces}.
\begin{proposition}
\label{prop trace relation}
Let $A$ be a $(\Gamma,\sigma)$-invariant operator on $L^2(\cS_\sL)$ and $g\in\Gamma$ a $\sigma$-regular element with respect to $\sigma$. If $A$ is of $(\sigma,g)$-trace class and $A\oplus 0\in\sB(\Gamma,\sigma)$, then
$$(\tau^{(g)}_\sigma\otimes\Tr)(A\oplus 0)=\tr^{(g)}_{\sigma}(A).$$
\end{proposition}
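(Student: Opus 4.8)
The strategy is to compute both sides of the asserted identity by unwinding the definitions through the embedding $j_\sigma$ of \eqref{eq j} and comparing matrix coefficients. First I would record what $A \oplus 0$ looks like concretely as an element of $\sA(\Gamma,\sigma) \subseteq \cB(l^2(\Gamma)\otimes l^2(\N))$. Using the orthonormal basis $B = \{v_i\}$ of $L^2(\cS_\sL)$ adapted to the cut-off function $c$, together with Lemma \ref{lem B criterion}, one writes $A\oplus 0 = \sum_{\gamma\in\Gamma}\bar\gamma\otimes(A\oplus 0)_\gamma$, where the components $(A\oplus 0)_\gamma \in \cB(l^2(\N))$ are obtained by conjugating $A$ by $j_\sigma$ and then restricting. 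Concretely, the embedding $j_\sigma$ sends $e \in L^2(\cS_\sL)$ to the $\Gamma$-indexed family with $\gamma$-component $c \cdot (T_{\gamma^{-1}}e)$, so the $\gamma$-component of $A\oplus 0$ acting on the copy of $L^2(\cS_\sL)$ sitting inside $l^2(\Gamma)\otimes L^2(\cS_\sL)$ via $j_\sigma$ is, up to the identification with $l^2(\N)$, the operator $c^{1/2}\, T_{\gamma^{-1}} A\, c^{1/2}$ — here I would use exactly the fact, arranged in the construction preceding Lemma \ref{lem B criterion}, that $c^{1/2} S c^{1/2} \in \sR$ for $S \in \sS^{\Gamma,\sigma}$, which is what makes $(A\oplus 0)_\gamma$ live in $\sR$ and hence ensures the hypothesis $A\oplus 0\in\sB(\Gamma,\sigma)$ is consistent with the computation.

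Next, apply the extended trace $t = \tau^{(g)}_\sigma \otimes \Tr$ in the form \eqref{eq t}: it picks out $\sum_{i\in\N}\sum_{h\in(g)}\theta(h)\,(A\oplus 0)_{ii,h}$, i.e. $\sum_{h\in(g)}\theta(h)\,\Tr\big((A\oplus 0)_h\big)$ where $\Tr$ is the operator trace on $l^2(\N)$. Substituting the expression $(A\oplus 0)_h = c^{1/2}\,T_{h^{-1}}A\,c^{1/2}$ (as an element of $\sR \subseteq \cK(l^2(\N))$, identified via the adapted basis) gives
$$t(A\oplus 0) = \sum_{h\in(g)}\theta(h)\,\tr\big(c^{1/2}\,T_{h^{-1}}A\,c^{1/2}\big) = \sum_{h\in(g)}\theta(h)\,\tr\big(c\,T_{h^{-1}}A\big),$$
using cyclicity of the ordinary trace $\tr$ on $L^2(\cS_\sL)$ and that $c^{1/2}$ commutes with itself. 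By Remark \ref{rem finite prop trace} (or directly from Definition \ref{def g trace}, writing $c = c_1 c_2$ and using cyclicity to absorb factors), the right-hand side is precisely $\tr^{(g)}_\sigma(A)$. This is the identity to be proved.

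The main obstacle — and the step requiring genuine care rather than bookkeeping — is justifying the identification $(A\oplus 0)_h = c^{1/2}\,T_{h^{-1}}A\,c^{1/2}$ correctly, i.e. checking that conjugating $A$ by the \emph{isometry} $j_\sigma$ (which is not unitary, since its range is a proper subspace) and decomposing along the $\Gamma$-grading of $\sA(\Gamma,\sigma)$ really produces these components, with the twisting cocycle $\sigma$ and the phases $e^{i\phi_\gamma}$ entering only through $\theta(h)$ and not spuriously elsewhere. This is where one must be scrupulous about the form of $j_\sigma$: the projective equivariance (the Lemma following \eqref{eq j}) guarantees that $j_\sigma A j_\sigma^*$, extended by $0$, commutes with $\bar\gamma\otimes 1$, so Lemma \ref{lem B criterion} applies; but extracting the $h$-component requires pairing against $\delta_h\otimes(\cdot)$ and tracking how $T_{h^{-1}}$ and the cut-off $c$ appear — a computation parallel to, but twisted by $\sigma$ relative to, the untwisted cases in \cite[section 6]{CM90}, \cite[subsection 2.2]{XY2}, \cite[section 3]{KMS}. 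I would also need to confirm absolute convergence of the double sum throughout, so that the rearrangements (Fubini over $i\in\N$ and $h\in(g)$, cyclicity term by term) are legitimate; this is exactly what the polynomial-growth hypothesis on $(g)$ secured in Lemma \ref{lem poly growth}, combined with $A$ being of $(\sigma,g)$-trace class. Once the component identification is pinned down, the remainder is cyclicity of $\tr$ and the definition of $\tr^{(g)}_\sigma$.
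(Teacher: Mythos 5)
Your proposal is correct and follows essentially the same route as the paper's proof: decompose $A\oplus 0$ along the $\Gamma$-grading of $\sA(\Gamma,\sigma)$, identify the $h$-component as $c^{1/2}T_{h^{-1}}Ac^{1/2}$ by explicitly computing $j_\sigma$ and $j_\sigma^*$ on the adapted basis, and then match the resulting sum $\sum_{h\in(g)}\theta(h)\Tr(c^{1/2}T_{h^{-1}}Ac^{1/2})$ with Definition \ref{def g trace} (taking $c_1=c_2=c^{1/2}$, so your final cyclicity step is not even needed). You have also correctly flagged the component identification as the only step requiring real care, which is exactly where the paper spends its effort.
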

\begin{proof}
For any $\gamma\in\Gamma$, denote by 
$$q_\gamma\colon l^2(\Gamma)\otimes L^2(\cS_\sL)\to\mathbb{C}\delta_\gamma\otimes L^2(\cS_\sL)$$
the canonical projection. Let $A$ be as given, and write $A\oplus 0\in\sB(\Gamma,\sigma)$ as a matrix $(A_{ij})$, where
\begin{equation}
\label{eq A}
A_{ij}=\sum_{\gamma\in\Gamma}A_{ij,\gamma}\bar\gamma,
\end{equation}
as in \eqref{eq a}. Letting $A_\gamma$ be the element of $\cK$ given by the matrix $(A_{ij,\gamma})_{i,j\in\N}$, one observes that
\begin{equation}
\label{eq qAq}
q_\gamma (A\oplus 0)q_e=\bar\gamma\otimes A_\gamma.	
\end{equation}
Now by \eqref{eq t}, together with the fact that $\tau^{(g)}_\sigma\otimes\Tr$ converges absolutely on elements of $\sB(\Gamma,\sigma)$, we have
\begin{align*}
	(\tau^{(g)}_\sigma\otimes\Tr)(A\oplus 0)&=\sum_{i\in\N}\sum_{h\in(g)}\theta(h)A_{ii,h}\\
	&=\sum_{h\in(g)}\theta(h)\Tr(A_h),
\end{align*}
Thus by Definition \ref{def g trace} it suffices to show that for each $h\in(g)$,
\begin{equation}
\label{eq penultimate}
\Tr(A_h)=\Tr(c^{1/2}T_{h^{-1}}Ac^{1/2}).	
\end{equation}
To this end, let $B=\{u_i\}_{i\in\N}$ be the orthonormal basis of $L^2(\cS_\sL)$ from the start of this section. By definition, 
$$A\oplus 0=j_\sigma Aj_\sigma^*,$$ 
where $j_\sigma$ is as in \eqref{eq j}. Hence by \eqref{eq qAq}, we have
\begin{align}
\label{eq T hat acting on basis}
(\bar h\otimes A_h)(\delta_e\otimes u_i)&=q_h\circ j_\sigma\circ A\circ j_\sigma^*(\delta_e\otimes u_i)
\end{align}
for each $i\in\N$ and $h\in\Gamma$. The map $j_\sigma^*$ can be written explicitly as follows: given $v\in l^2(\Gamma)\otimes L^2(\cS_\sL)$ and $x\in M$,
$$(j_\sigma^*v)(x)=\sum_{\gamma\in\Gamma}c(\gamma^{-1}x)S_{\gamma^{-1}}(\gamma^{-1}x)\cdot \gamma\cdot(v(\gamma,\gamma^{-1}x)).$$
Applying this to $\delta_e\otimes u_i$ and using that $S_e\equiv 1$ gives
\begin{align*}
j_\sigma^*(\delta_e\otimes u_i)(x)&=\sum_{\gamma\in\Gamma}c^{1/2}(\gamma^{-1}x)S_{\gamma^{-1}}(\gamma^{-1}x)\cdot\gamma\cdot((\delta_e(\gamma)u_i(\gamma^{-1}x)))\\
&=c^{1/2}(x)u_i(x).
\end{align*}
Thus \eqref{eq T hat acting on basis} equals $q_h\circ j_\sigma\circ A(cu_i)$. By \eqref{eq j}, $j_\sigma(A(cu_i))(\gamma)=c^{1/2}T_{\gamma^{-1}}(A(c^{1/2}u_i))$, hence
\begin{align}
\label{eq Agamma}
A_\gamma(u_i)=
c^{1/2}T_{\gamma^{-1}}A(c^{1/2}u_i).
\end{align}
It follows that for any $h\in(g)$,
\begin{align*}
\Tr(A_h)&=
\sum_i\langle c^{1/2}T_{h^{-1}}A(c^{1/2}u_i),u_i\rangle_{L^2(\cS_\sL)}\\
&=\Tr(c^{1/2}T_{h^{-1}}Ac^{1/2}),
\end{align*}
which establishes \eqref{eq penultimate}.
\end{proof}

\begin{theorem}
\label{thm trace relation}
	Let $M$, $\Gamma$, and $D$ be as in the statement of Theorem \ref{thm main1} and Definition \ref{def projective Dirac}. Let $g\in\Gamma$ be a $\sigma$-regular element for a multiplier $\sigma$. Then
	\begin{enumerate}[(i)]
	\item $e^{-tD^2}\oplus 0\in\sB(\Gamma,\sigma)$, where $\sB(\Gamma,\sigma)$ is as in \eqref{eq B};
	\item we have $$\big(\tau^{(g)}_\sigma\big)_*\Ind_{\Gamma,\sigma}(D)=\textnormal{str}_{\sigma}^{(g)}(e^{-tD^2}).$$
	\end{enumerate}
\end{theorem}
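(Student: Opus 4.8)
The plan is to realise the index class as (the image of) the Connes--Moscovici heat idempotent, for which $\tau^{(g)}_\sigma\otimes\Tr$ can be computed by the McKean--Singer argument, and then transport the computation back to $L^2(\cS_\sL)$ using Proposition~\ref{prop trace relation}. Throughout I work in the even-dimensional case (the one needed for Theorem~\ref{thm main1}), so $\cS_\sL=(\cS^+\otimes\sL)\oplus(\cS^-\otimes\sL)$, $D=\left(\begin{smallmatrix}0&D^-\\D^+&0\end{smallmatrix}\right)$, and with $N_\pm\coloneqq D^\mp D^\pm$ we have $e^{-tD^2}=e^{-tN_+}\oplus e^{-tN_-}$.

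\emph{Part (i).} Since $M/\Gamma$ is compact, $D$ is a Dirac-type operator with bounded geometry, so $e^{-tD^2}$ — and each $D^je^{-tD^2}$, $j\ge 0$ — has a smooth $(\Gamma,\sigma)$-invariant Schwartz kernel obeying a Gaussian off-diagonal bound $\|k_t(x,y)\|\le C(t,j)\,e^{-d(x,y)^2/5t}$, with analogous bounds on all derivatives. To apply Lemma~\ref{lem B criterion} I would write $e^{-tD^2}\oplus 0=\sum_\gamma\bar\gamma\otimes a_\gamma$ and use \eqref{eq Agamma} (with $A=e^{-tD^2}$), which identifies $a_\gamma$ with the operator $c^{1/2}T_{\gamma^{-1}}e^{-tD^2}c^{1/2}$; its kernel is smooth and supported in $N\times N$, so $a_\gamma\in\sR$, and all $\sR$-seminorms of $a_\gamma$ are bounded by $C(t,k)\,e^{-c_0\ell(\gamma)^2}$, using the Gaussian bound together with the cocompactness estimate $d(x,\gamma y)\ge C^{-1}\ell(\gamma)-C$ for $x,y\in\supp c$. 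Since $\Delta$ restricts on the block indexed by $I$ (see \eqref{eq I}) to multiplication by the eigenvalues of $D^2|_N$, the operator $a_\gamma\circ\Delta$ equals, up to a commutator term $[D^2,c^{1/2}]$ supported near $\supp c$ and up to the contribution of the indices outside $I$ (on which $a_\gamma$ is already rapidly decreasing), the $\gamma$-component of $c^{1/2}T_{\gamma^{-1}}(e^{-tD^2}D^2)c^{1/2}$; these again obey the super-exponential estimate in $\ell(\gamma)$. Hence $\sum_\gamma\ell(\gamma)^k\|a_\gamma\circ\Delta\|<\infty$ for every $k$, and Lemma~\ref{lem B criterion} gives $e^{-tD^2}\oplus 0\in\sB(\Gamma,\sigma)$; the same argument applies to $D^je^{-tD^2}$.

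\emph{Part (ii).} Choosing the normalising function $\chi$ with $\chi(x)^2=1-e^{-tx^2}$ makes $1-\chi(D)_\mp\chi(D)_\pm=e^{-tN_\pm}$, and the connecting-map construction of Definition~\ref{def connectingmaps}(i) applied to $u=\chi(D)_+$ then produces an idempotent over the multiplier algebra whose $K_0$-class, relative to $e_0\coloneqq\left(\begin{smallmatrix}0&0\\0&1\end{smallmatrix}\right)$, is $\Ind_{\Gamma,\sigma,L^2}(D)$. By the standard identification of the analytic index with the Connes--Moscovici heat idempotent (cf. \cite{CM90}, and \cite[Theorem~6.1]{Wang-Wang} for the untwisted model), this class equals $[e_t]-[e_0]$, where $e_t$ is the idempotent over $(C^*(M;L^2(\cS_\sL))^{\Gamma,\sigma})^+$ with diagonal blocks $e^{-tN_+}$ and $1-e^{-tN_-}$. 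Pushing forward by $\oplus 0$ and using $K_0(\sB(\Gamma,\sigma))\cong K_0(C^*_r(\Gamma,\sigma)\otimes\cK)$ (Lemma~\ref{lem stable under hfc}), I would pair the continuous trace $\tau^{(g)}_\sigma\otimes\Tr$ (Lemma~\ref{lem poly growth}) with the matrix $(\oplus0)(e_t)-(\oplus0)(e_0)\in M_2(\sB(\Gamma,\sigma))$; only the diagonal blocks contribute, and Proposition~\ref{prop trace relation} turns $\tau^{(g)}_\sigma\otimes\Tr$ of $(\oplus0)$ of a block back into $\tr^{(g)}_\sigma$ of that block. Since $(e_t)_{11}-(e_0)_{11}=e^{-tN_+}$ and $(e_t)_{22}-(e_0)_{22}=-e^{-tN_-}$, this gives
\[
\big(\tau^{(g)}_\sigma\big)_*\Ind_{\Gamma,\sigma}(D)=\tr^{(g)}_\sigma\big(e^{-tN_+}\big)-\tr^{(g)}_\sigma\big(e^{-tN_-}\big)=\str^{(g)}_\sigma\big(e^{-tD^2}\big).
\]
The $t$-independence of the right-hand side is then automatic; alternatively it follows directly from $\tfrac{d}{dt}\str^{(g)}_\sigma(e^{-tD^2})=-\tfrac12\str^{(g)}_\sigma[D,De^{-tD^2}]_s=0$ by Corollary~\ref{cor supertrace}, once Part~(i) is known for $De^{-tD^2}$ and $D^2e^{-tD^2}$.

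\emph{Expected main obstacle.} Conceptually (ii) is just McKean--Singer transported through identifications already in place, so the real work is Part~(i): checking that $e^{-tD^2}$ and the $D^je^{-tD^2}$ lie in the smooth subalgebra $\sB(\Gamma,\sigma)$, which is designed to control simultaneously the group-direction decay (Gaussian heat bounds plus cocompactness) and the spectral/smoothing direction encoded by the factor $(1\otimes\Delta)^2$ (handled by the heat operator beating any power of $D^2$). The fiddly point is matching $\Delta$ on the block $I$ with $D^2|_N$ rather than the global $D^2$, where the cut-off $c^{1/2}$ — and the boundedness of $[D^2,c^{1/2}]\cdot(\text{smoothing})$ — is what makes the two agree up to harmless terms. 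A secondary point is the identification $[P]=[e_t]=\Ind_{\Gamma,\sigma,L^2}(D)$ in the Roe-algebra $K$-theory, which is standard but should be stated carefully (e.g. via the homotopy from $\chi(D)$ to the bounded transform and the usual deformation of $e_t$).
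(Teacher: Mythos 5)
Your proposal is correct and follows essentially the same route as the paper: part (i) is proved via the decomposition $e^{-tD^2}\oplus 0=\sum_\gamma\bar\gamma\otimes c^{1/2}T_{\gamma^{-1}}e^{-tD^2}c^{1/2}$, the locality of $\Delta$ on $\supp(c)$ (up to the commutator with $c^{1/2}$), Gaussian off-diagonal heat-kernel decay against the at-most-exponential growth of a finitely generated group, and Lemma~\ref{lem B criterion}; part (ii) via the Connes--Moscovici heat idempotent representative of $(\oplus\,0)_*\Ind_{\Gamma,\sigma,L^2}(D)$ combined with Proposition~\ref{prop trace relation}. The paper simply cites \cite[Exercise 12.7.3]{HigsonRoe} for the heat-idempotent identification where you spell out the normalising function, but the argument is the same.
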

\begin{proof}
For (i), note that by \eqref{eq qAq} and \eqref{eq Agamma}, we can write
\begin{equation*}
\label{eq heat plus 0}
e^{-tD^2}\oplus 0=\sum_{\gamma\in\Gamma}\bar\gamma\otimes c^{1/2}T_{\gamma^{-1}}e^{-tD^2}c^{1/2}.	
\end{equation*}
Under the identification of $L^2(\cS_\sL)$ with $l^2(\N)$ given at the start of this section, the operator $c^{1/2}T_{\gamma^{-1}}e^{-tD^2}c^{1/2}$ is an element of $\sR$. Hence by Lemma \ref{lem B criterion}, it suffices to prove that 
\begin{equation}
\label{eq convergence in B}
\sum_{\gamma}\ell(\gamma)^k\norm{c^{1/2}T_{\gamma^{-1}}e^{-tD^2}c^{1/2}\circ\Delta}<\infty
\end{equation}
for any $k\geq 0$, where the norm is the operator norm on $l^2(\N)\cong L^2(\cS_\sL)$. 

Note that by construction, the operator $\Delta$ is local in the sense that for any $u\in L^2(\cS_\sL)$, we have $\supp(\Delta u)\subseteq\supp(u)$,
while if $\supp(u)\subseteq\supp(c)$, then $\Delta$ acts on $u$ as $D^2$. Since the operator $c^{1/2}T_{\gamma^{-1}}e^{-tD^2}c^{1/2}$ is supported only on the subset $\supp(c)\times\supp(c)\subseteq M\times M$,
and $c^{1/2}D^2=D^2 c^{1/2}+DA_1+A_2$ for some operators $A_1,A_2$ of order zero, it follows from the Gaussian decay of the heat kernel that there exist constants $C_1$, $C_2$, and $C_3$ such that for all $\gamma$ with $\ell(\gamma)>C$, we have
$$\norm{c^{1/2}T_{\gamma^{-1}}e^{-tD^2}c^{1/2}D^2}\leq C_2 e^{-C_3\ell(\gamma)^2}.$$
Since $\Gamma$ is finitely generated, there exist constants $C_4, C_5$ such that the number of group elements $\gamma$ with $\ell(\gamma)\leq l$ is at most $C_4 e^{C_5 l}$. Setting
$$C_6=\sum_{\ell(\gamma)<C_1}\ell(\gamma)^k\norm{c^{1/2}T_{\gamma^{-1}}e^{-tD^2}c^{1/2}D^2}$$
and combining the above observations shows that
\begin{align*}
\sum_{\gamma}\ell(\gamma)^k\norm{c^{1/2}T_{\gamma^{-1}}e^{-tD^2}c^{1/2}D^2}&=C_6+\sum_{\ell(\gamma)\geq C_1}\ell(\gamma)^k\norm{c^{1/2}T_{\gamma^{-1}}e^{-tD^2}c^{1/2}D^2}\\
	&\leq C_6+\sum_{\ell(\gamma)\geq C_1}\ell(\gamma)^kC_4 e^{C_5\ell(\gamma)}C_2 e^{-C_3\ell(\gamma)^2},
\end{align*}
which is finite. This establishes \eqref{eq convergence in B}.

For (ii), note that we have a commutative diagram
	\[
	\begin{tikzcd}
	K_0(C^*_r(\Gamma,\sigma)\otimes\mathcal{K}(L^2(\cS_\sL)))\ar{r}{\Tr_*}\ar{d}{(\tau^{(g)}_\sigma\otimes\Tr)_*}& K_0(C^*_r(\Gamma,\sigma))\ar{d}{(\tau^{(g)}_\sigma)_*} \\
	\C\ar{r}{=} & \C,
	\end{tikzcd}\qquad\qquad
	\]
	where the map $\Tr_*$ is induced by the operator trace on $L^2(\cS_\sL)$. It follows from \cite[Exercise 12.7.3]{HigsonRoe} that the element
$$(\oplus\,0)_*\Ind_{\Gamma,\sigma,L^2}(D)\in K_0(C^*_r(\Gamma,\sigma)\otimes\mathcal{K}(L^2(\cS_\sL)))$$ 
can be represented explicitly by the following difference of idempotents constructed from the heat operator:
\begin{equation}
\label{eq pchi}
\begin{pmatrix}
			e^{-tD^-D^+}\oplus 0 & e^{-\frac{t}{2}D^-D^+}\frac{1-e^{-tD^-D^+}}{D^-D^+}D^-\oplus 0\\
			e^{-\frac{t}{2}D^+D^-}\frac{1-e^{-tD^+D^-}}{D^+D^-}D^+\oplus 0 & (1-e^{-tD^+D^-})\oplus 0
\end{pmatrix}
-
\begin{pmatrix}0&0\\0&1\end{pmatrix}.
\end{equation}
By part (i), the diagonal entries are in the unitisation of $\sB(\Gamma,\sigma)$, and a similar argument shows that this is also true of the off-diagonal entries. Thus
\begin{align*}
\qquad\quad\big(\tau^{(g)}_\sigma\big)_*(\Ind_{\Gamma,\sigma}(D))&=(\tau^{(g)}_\sigma\otimes\Tr)_*(\oplus\,0)_*\Ind_{\Gamma,\sigma,L^2}(D)\\
&=(\tau^{(g)}_\sigma\otimes\Tr)(e^{-tD^-D^+}\oplus 0)+(\tau^{(g)}_\sigma\otimes\Tr)(-e^{-tD^+D^-}\oplus 0)\\
&=\str^{(g)}_\sigma(e^{-tD^2}).	&\qedhere
\end{align*}
where we have used Proposition \ref{prop trace relation} for the last equality.
\end{proof}

We now turn our attention to Theorem \ref{thm main1}. To begin, we have:
\begin{proposition}
	\label{prop small s}
	Let $M$, $\Gamma$, $D$, and $\sigma$ be as in Theorem \ref{thm main1}. Let $\omega$ be as in \eqref{eq omega}. If the $\Gamma$-invariant Riemannian metric on $M$ satisfies
	$$\inf_{x\in M}(\kappa(x)-4\norm{c(\omega)}_x)>0,$$
	where the norm is taken fibrewise in $\End(\cS_\sL)$, then
	$$\Ind_{\Gamma,\sigma,L^2}(D)=0\in K_{0}\big(C^*(M;L^2(\cS_\sL))^{\Gamma,\sigma}\big).$$
	In particular $\Ind_{\Gamma,\sigma}(D)=0\in K_0(C^*_r(\Gamma,\sigma)).$
\end{proposition}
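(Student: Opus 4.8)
The plan is to use a standard Lichnerowicz–Bochner argument, adapted to the projective setting, to show that $D^2$ is bounded below by a positive constant, and then to invoke functional calculus to conclude that the index class vanishes. First I would write down the Lichnerowicz-type formula for the square of the twisted Dirac operator $D = \slashed{\partial}\otimes\nabla^{\sL,1}$. Since $\sL$ is equipped with the Hermitian connection $\nabla^{\sL,1} = d + i\eta$, whose curvature is $d(i\eta) = i\omega$, the Weitzenböck formula reads
\begin{equation*}
D^2 = \nabla^*\nabla + \frac{\kappa}{4} + c(i\omega),
\end{equation*}
where $\nabla$ is the tensor-product connection on $\cS_\sL$, $\kappa$ is the scalar curvature, and $c(i\omega)$ is the operator obtained by Clifford-contracting the $2$-form curvature $i\omega$ (this is the twisting term in the Bochner identity for a Dirac operator twisted by a line bundle with curvature $i\omega$, cf. the normalisation in \cite[Lemma 3.1]{MathaiChatterji}). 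Hence for any smooth compactly supported section $u$,
\begin{equation*}
\langle D^2 u, u\rangle = \|\nabla u\|^2 + \int_M \Big(\tfrac{\kappa(x)}{4} + \langle c(i\omega)u, u\rangle_x\Big)\,dx \geq \int_M \Big(\tfrac{\kappa(x)}{4} - \|c(\omega)\|_x\Big)\|u(x)\|^2\,dx,
\end{equation*}
using $|\langle c(i\omega)u,u\rangle_x| \le \|c(i\omega)\|_x\|u(x)\|^2 = \|c(\omega)\|_x\|u(x)\|^2$. Under the hypothesis $\inf_{x}(\kappa(x) - 4\|c(\omega)\|_x) > 0$, setting $4\varepsilon_0 := \inf_x(\kappa(x) - 4\|c(\omega)\|_x) > 0$ gives $\langle D^2 u, u\rangle \ge \varepsilon_0\|u\|^2$, so $D^2 \ge \varepsilon_0$ and $0$ is not in the spectrum of $D$.

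Next I would pass from this spectral gap to the vanishing of the higher index. Because $D$ commutes with the projective $(\Gamma,\sigma)$-action $T$, and $M$ is $\Gamma$-cocompact, the operator $f(D)$ lies in $C^*(M;L^2(\cS_\sL))^{\Gamma,\sigma}$ for every $f \in C_0(\R)$, as established in the proposition preceding Definition \ref{def higher index}. Since $\spec(D) \subseteq (-\infty,-\sqrt{\varepsilon_0}]\cup[\sqrt{\varepsilon_0},\infty)$, we may choose the normalising function $\chi$ so that $\chi \equiv 1$ on $[\sqrt{\varepsilon_0},\infty)$ and $\chi \equiv -1$ on $(-\infty,-\sqrt{\varepsilon_0}]$; then $\chi^2 - 1$ vanishes on $\spec(D)$, so $\chi(D)^2 = 1$ exactly, i.e. $\chi(D)$ is a genuine self-adjoint involution in the multiplier algebra $\cM$, already invertible on the nose rather than merely modulo the Roe algebra. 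Consequently $\frac{\chi(D)+1}{2}$ is an honest projection in $\cM$, its image in $\cQ = \cM/C^*(M;L^2(\cS_\sL))^{\Gamma,\sigma}$ lifts to a projection in $\cM$, and the connecting map construction in Definition \ref{def connectingmaps}(i) applied to $\chi(D)_+$ produces the zero class: concretely, with $w$ already invertible over $\cM$ and the construction carried out over $\cM$ rather than over $\cQ$, the idempotent $P$ is equivalent in $\cM$ to $\begin{psmallmatrix}0&0\\0&1\end{psmallmatrix}$, so $\partial_0[\chi(D)_+] = [P] - \begin{bsmallmatrix}0&0\\0&1\end{bsmallmatrix} = 0$ in $K_0(C^*(M;L^2(\cS_\sL))^{\Gamma,\sigma})$. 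Thus $\Ind_{\Gamma,\sigma,L^2}(D) = 0$. Applying the homomorphism $(\oplus\,0)_* : K_0(C^*(M;L^2(\cS_\sL))^{\Gamma,\sigma}) \to K_0(C^*_r(\Gamma,\sigma))$ from \eqref{eq c star g index} immediately gives $\Ind_{\Gamma,\sigma}(D) = 0 \in K_0(C^*_r(\Gamma,\sigma))$.

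The only genuinely delicate point is the precise form of the Bochner/Weitzenböck identity and, in particular, getting the coefficient of the twisting curvature term right — i.e. that the relevant curvature contribution is $c(i\omega)$ with the stated normalisation, so that the positivity hypothesis $\kappa - 4\|c(\omega)\| > 0$ is exactly the condition needed for $D^2$ to be uniformly positive. This is where I would appeal to the computation underlying \cite[Lemma 3.1]{MathaiChatterji}, which gives both that $D^s$ commutes with the projective action and the explicit Lichnerowicz formula; the rest of the argument — functional calculus with a gap, and the triviality of the connecting-map class for an invertible operator — is formal and identical to the untwisted cocompact case.
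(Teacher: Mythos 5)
Your proposal is correct and follows essentially the same route as the paper: the Lichnerowicz formula $D^2=\nabla^*\nabla+\tfrac{\kappa}{4}+ic(\omega)$ gives a uniform lower bound on $D^2$ under the stated hypothesis, and then one takes $\chi$ to be (a smoothing of) the sign function so that $\chi(D)^2=1$ and the explicit index representative vanishes identically, after which $(\oplus\,0)_*$ gives the statement in $K_0(C^*_r(\Gamma,\sigma))$. Your version is in fact slightly more careful than the paper's in recording the strict spectral gap $D^2\geq\varepsilon_0>0$, which is what justifies applying the sign function.
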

The proof of this proposition uses the Bochner-Lichnerowicz formula.
\begin{lemma}[Bochner-Lichnerowicz]
\label{lem Bochner}
	Let $N$ be a spin Riemannian manifold, and let $\cS_N\to N$ be the spinor bundle with connection $\nabla^{\cS_N}$. Let $\nabla^E$ be a Hermitian connection on a Hermitian vector bundle $E\to M$. Then
	$$D_E^2=\nabla^*\nabla+\frac{\kappa}{4}+c(R^E),$$
	where $D_E$ is the Dirac operator associated to the connection $\nabla=\nabla^{\cS_N}\otimes 1+1\otimes\nabla^E$ on $\cS_N\otimes E$, $\kappa$ is the scalar curvature on $N$, and $c(R^E)$ denotes Clifford multiplication by the curvature of $\nabla^E$.
\end{lemma}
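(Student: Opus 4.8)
The plan is the classical Weitzenböck--Lichnerowicz computation carried out pointwise in a synchronous local frame. Fix $p\in N$ and choose a local orthonormal frame $\{e_i\}$ of $TN$ near $p$ with $(\nabla^{TN}e_i)(p)=0$; then $[e_i,e_j](p)=0$, and the tensor-product connection $\nabla=\nabla^{\cS_N}\otimes 1+1\otimes\nabla^E$ on $\cS_N\otimes E$ also satisfies $(\nabla e_j)(p)=0$. Since $\nabla$ is compatible with Clifford multiplication, $D_E=\sum_i c(e_i)\nabla_{e_i}$ gives, at $p$,
$$D_E^2=\sum_{i,j}c(e_i)c(e_j)\nabla_{e_i}\nabla_{e_j}=-\sum_i\nabla_{e_i}\nabla_{e_i}+\sum_{i<j}c(e_i)c(e_j)\big(\nabla_{e_i}\nabla_{e_j}-\nabla_{e_j}\nabla_{e_i}\big),$$
using $c(e_i)c(e_j)+c(e_j)c(e_i)=-2\delta_{ij}$. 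At $p$ the first term is $\nabla^*\nabla$ in this frame, and since $[e_i,e_j](p)=0$ the difference in the second term is the curvature $R^\nabla(e_i,e_j)$. Using the antisymmetry of $R^\nabla$ to replace $\sum_{i<j}$ by $\tfrac12\sum_{i,j}$, one gets $D_E^2=\nabla^*\nabla+\tfrac12\sum_{i,j}c(e_i)c(e_j)R^\nabla(e_i,e_j)$ at $p$, hence everywhere since $p$ was arbitrary.

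Next I would decompose the tensor-product curvature as $R^\nabla=R^{\cS_N}\otimes 1+1\otimes R^E$. The $E$-part contributes $\tfrac12\sum_{i,j}c(e_i)c(e_j)\otimes R^E(e_i,e_j)=\sum_{i<j}c(e_i)c(e_j)\otimes R^E(e_i,e_j)$, which is precisely $c(R^E)$ by the definition of Clifford multiplication on the $\End(E)$-valued $2$-form $R^E$. For the spinorial part I would insert the standard formula $R^{\cS_N}(e_i,e_j)=\tfrac14\sum_{k,l}\langle R(e_i,e_j)e_k,e_l\rangle\,c(e_k)c(e_l)$, so that the assertion reduces to the purely algebraic identity
$$\tfrac18\sum_{i,j,k,l}\langle R(e_i,e_j)e_k,e_l\rangle\,c(e_i)c(e_j)c(e_k)c(e_l)=\tfrac{\kappa}{4}.$$

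The single genuine computation -- and the step I expect to be the main obstacle -- is this last identity. The argument is the usual Lichnerowicz one: for pairwise distinct $i,j,k$ the monomial $c(e_i)c(e_j)c(e_k)$ is invariant under cyclic permutation of its indices, so the first Bianchi identity $R(e_i,e_j)e_k+R(e_j,e_k)e_i+R(e_k,e_i)e_j=0$ forces the contribution of all such terms to vanish; the surviving terms, in which two of the indices $i,j,k$ coincide, collapse via $c(e_i)^2=-1$ and the symmetries of the Riemann tensor to a multiple of the Ricci contraction $\sum_{i,k}\langle R(e_i,e_k)e_k,e_i\rangle=\kappa$, and bookkeeping of the numerical factors produces exactly $\kappa/4$. (Alternatively, one may simply invoke this step as the classical Lichnerowicz formula.) Everything else -- compatibility of $\nabla$ with Clifford multiplication, the expression for $\nabla^*\nabla$ in a synchronous frame, and the additivity of curvature under tensor products of connections -- is routine and would be stated without detailed calculation.
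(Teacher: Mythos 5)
Your proposal is correct: it is the standard Lichnerowicz--Weitzenb\"ock computation (synchronous frame, general Bochner identity $D_E^2=\nabla^*\nabla+\tfrac12\sum_{i,j}c(e_i)c(e_j)R^\nabla(e_i,e_j)$, splitting $R^\nabla$ into spinorial and twisting parts, and the Bianchi-identity collapse of the quartic Clifford term to $\kappa/4$). The paper states this lemma as a classical fact and offers no proof of its own, so your argument supplies exactly the standard derivation being invoked, and all the steps you flag as routine are indeed routine.
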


\begin{proof}[Proof of Proposition \ref{prop small s}]
Let $\nabla^L$ be the Hermitian connection on the trivial bundle $\sL\to M$ defined by $i\eta$, where $\eta$ is any one-form satisfying $d\eta=\omega$. Applying Lemma \ref{lem Bochner} with $D_E=\slashed{\partial}\otimes\nabla^\sL$ gives  
$$(\slashed{\partial}\otimes\nabla^\sL)^2=\nabla^*\nabla+\frac{\kappa}{4}+ic(\omega),$$
where $\nabla=\nabla^{\cS}\otimes 1+1\otimes\nabla^\sL$. Thus for all $u\in L^2(M,\cS_\sL)$, we have
\begin{align*}
\langle	D^2 u,u\rangle_{L^2}&=\langle\nabla u,\nabla u\rangle_{L^2}+\frac{1}{4}\langle\kappa u,u\rangle_{L^2}+\langle ic(\omega)u,u\rangle_{L^2}\geq 0
\end{align*}
by our assumption. Then $\Ind_{\Gamma,\sigma,L^2}(D)$ can be defined by choosing the normalising function $\chi$ equal to the sign function, and a routine computation then shows that the index representative \eqref{eq Aj graded} is exactly zero. The final statement follows by applying the homomorphism $(\oplus\,0)_*$ to $\Ind_{\Gamma,\sigma,L^2}(D)$, as in \eqref{eq c star g index}.
\end{proof}

\begin{corollary}
\label{cor vanishing higher}
	Let $M$ and $\Gamma$ be as in Theorem \ref{thm main1}. For each $s\in\R$, let $D^s$ be the twisted Dirac operator constructed in subsection \ref{subsec multipliers}. If the metric on $M$ has positive scalar curvature, then for $s$ sufficiently small we have $\Ind_{\Gamma,\sigma^s,L^2}(D^s)=0$.
\end{corollary}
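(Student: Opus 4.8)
The plan is to apply Proposition \ref{prop small s} not to the datum $(\omega,\eta,\phi,\sigma)$ of subsection \ref{subsec multipliers} but to the rescaled datum $(s\omega,\,s\eta,\,s\phi,\,\sigma^s)$. Indeed, replacing $\eta$ by $s\eta$ in that construction replaces $\omega=d\eta$ by $s\omega$, the family $\phi=\{\phi_\gamma\}$ by $\{s\phi_\gamma\}$, the real cocycle $\alpha=\alpha_\phi$ by $s\alpha$, and hence the multiplier by $\sigma^1_{s\phi}=\sigma^s$; and the associated projectively invariant Dirac operator $\slashed{\partial}\otimes(d+is\eta)$ is exactly $D^s$ of Definition \ref{def projective Dirac}. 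Proposition \ref{prop small s}, read with this rescaled datum, then asserts that $\Ind_{\Gamma,\sigma^s,L^2}(D^s)=0$ provided
\[
\inf_{x\in M}\bigl(\kappa(x)-4\,\norm{c(s\omega)}_x\bigr)=\inf_{x\in M}\bigl(\kappa(x)-4|s|\,\norm{c(\omega)}_x\bigr)>0,
\]
the norm being taken fibrewise in $\End(\cS_\sL)$, as in Proposition \ref{prop small s}.

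It remains only to see that this hypothesis holds once $|s|$ is small, and for this we invoke cocompactness. Both $\kappa$ and $x\mapsto\norm{c(\omega)}_x$ are continuous and $\Gamma$-invariant on $M$, hence descend to continuous functions on the compact orbifold $M/\Gamma$. Thus $\kappa_0:=\inf_{x\in M}\kappa(x)>0$ — this is the only place positivity of the scalar curvature is used — and $C:=\sup_{x\in M}\norm{c(\omega)}_x<\infty$. If $C=0$, the displayed inequality holds for every $s$; otherwise, for any $s$ with $|s|<\kappa_0/(4C)$ we have $\kappa(x)-4|s|\norm{c(\omega)}_x\geq\kappa_0-4|s|C>0$ uniformly in $x\in M$. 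Hence $\Ind_{\Gamma,\sigma^s,L^2}(D^s)=0$ for all such $s$, which is the assertion (with $\varepsilon=\kappa_0/(4C)$, or $\varepsilon=\infty$ when $C=0$).

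I do not expect a genuine obstacle here: the corollary is Proposition \ref{prop small s} applied to a rescaled metric datum, combined with the two uniform estimates that cocompactness supplies. The only mild subtlety is bookkeeping — checking that the $s$-rescaled datum of subsection \ref{subsec multipliers} reproduces precisely $D^s$ and $\sigma^s$, and that the curvature of $\nabla^{\sL,s}=d+is\eta$ is $is\omega$ so that the Bochner--Lichnerowicz term in Lemma \ref{lem Bochner} is $is\,c(\omega)$ — after which the quantitative threshold $|s|<\kappa_0/(4C)$ drops out immediately.
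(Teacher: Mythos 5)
Your proof is correct and follows essentially the same route as the paper: apply Proposition \ref{prop small s} to the $s$-rescaled data (the curvature of $\nabla^{\sL,s}$ being $is\omega$), then use cocompactness and $\Gamma$-invariance to get the uniform lower bound $\kappa_0>0$ on $\kappa$ and the uniform upper bound on $\norm{c(\omega)}_x$, so the hypothesis holds for $|s|$ small. The only cosmetic difference is that the paper bounds $\norm{c(\omega)}_x$ via the pointwise eigenvalues of $\omega$ as a skew-symmetric endomorphism, whereas you simply invoke continuity and compactness of the quotient; both yield the same conclusion.
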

\begin{proof}
The curvature of the connection $\nabla^{\sL,s}$ is $is\omega$. Thus by Proposition \ref{prop small s}, it suffices to show that
\begin{equation}
\label{eq pointwise small s}
\inf_{x\in M}(\kappa(x)-4\norm{sc(\omega)}_x)>0	
\end{equation}
for $s$ sufficiently small. For $x\in M$, let $i\lambda_1(x),\ldots,i\lambda_{n}(x)$ be the pointwise eigenvalues of $\omega$, which we view as a skew-symmetric endomorphism of $TM$ using the Riemannian metric. Since the action of $\Gamma$ on $M$ is cocompact, $\kappa(x)$ is uniformly bounded below by some $\kappa_0>0$, while there exists a constant $C$ such that
$$\norm{c(\omega)}_x\leq C\cdot\sum_{j=1}^n|\lambda_j(x)|$$
for each $x$. Since $\omega$ is $\Gamma$-invariant, the right-hand side is again uniformly bounded over $M$. It follows that \eqref{eq pointwise small s} holds for $s$ sufficiently small.
\end{proof}

\begin{proposition}
\label{prop heat trace}
Let $(g)\subseteq\Gamma$ be a $\sigma^s$-regular conjugacy class, with $\sigma^s$ as in \eqref{eq sigma}. Then the operators $e^{-tD^{s}_-D^{s}_+}$ and $e^{-tD^{s}_+D^{s}_-}$ are of $(\sigma^s, g)$-trace class.
\end{proposition}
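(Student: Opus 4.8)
The plan is to verify the two conditions of Definition \ref{def g trace} directly for the heat operators $S = e^{-tD^s_\mp D^s_\pm}$, regarded as the heat semigroups of the generalized Laplacians $D^s_\mp D^s_\pm$ acting on $\cS^\pm\otimes\sL$. The starting observations are that $S$ is bounded, self-adjoint, and $(\Gamma,\sigma^s)$-invariant --- since $D^s$ commutes with the projective action $T^s$ (as noted after Definition \ref{def projective Dirac}), so does every bounded Borel function of $(D^s)^2$, in particular $S$ --- and that $S$ has a smooth Schwartz kernel $K_S$ obeying a uniform Gaussian off-diagonal bound $\|K_S(x,y)\|_{\End(\cS_\sL)} \le C(t)\,e^{-d(x,y)^2/(5t)}$ with $C(t)$ independent of $x,y$. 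This last estimate is the standard heat-kernel bound on a manifold of bounded geometry; the cocompact isometric $\Gamma$-action supplies the needed uniform curvature and injectivity-radius bounds, and the lower-order twisting terms in $(D^s)^2$ do not affect it, by finite propagation speed of the wave operator attached to $D^s$ (whose principal symbol coincides with that of $\slashed{\partial}$).

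For condition (i), I would observe that for $\phi_1,\phi_2 \in C_c(M)$ and $h\in(g)$, the operator $\phi_1 T^s_{h^{-1}} S \phi_2$ has a smooth Schwartz kernel supported in the compact set $\supp(\phi_1)\times\supp(\phi_2)$, hence is of trace class, with trace given by the integral formula of Lemma \ref{lem integral formula}.

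For condition (ii), I would estimate, using that formula,
$$\big|\tr(\phi_1 T^s_{h^{-1}} S \phi_2)\big| \;\le\; \|\phi_1\|_\infty\|\phi_2\|_\infty \int_{K}\|K_S(hx,x)\|_{\End(\cS_\sL)}\,dx \;\le\; C'(t)\,\sup_{x\in K} e^{-d(hx,x)^2/(5t)},$$
where $K := \supp(\phi_1)\cap\supp(\phi_2)$ is compact. Because $M/\Gamma$ is compact, the \v{S}varc--Milnor lemma yields constants $a>0$, $b\ge 0$ with $d(hx,x) \ge a\,\ell(h)-b$ for all $h\in\Gamma$ and all $x$ in the fixed compact set $K$, so each summand is bounded by $C''(t)\,e^{-c\ell(h)^2}$ for some $c>0$. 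Since $\Gamma$ is finitely generated, the number of $h$ with $\ell(h)\le n$ grows at most exponentially in $n$, whence $\sum_{h\in(g)} e^{-c\ell(h)^2} \le \sum_{h\in\Gamma} e^{-c\ell(h)^2} < \infty$ --- the quadratic decay defeating the exponential growth --- and in particular the polynomial-growth hypothesis on $(g)$ is not needed for this step. This gives the absolute convergence required, completing the verification that $S$ is of $(\sigma^s,g)$-trace class.

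The only genuinely technical input is the uniform Gaussian off-diagonal estimate for the twisted heat kernel, together with the comparison $d(hx,x)\gtrsim\ell(h)$ on a fixed compact set; both are standard facts (the former via finite propagation speed and bounded geometry, the latter via \v{S}varc--Milnor), so I do not anticipate a real obstacle here --- the work lies in assembling these ingredients cleanly and in keeping track of the $\U(1)$-valued phase factors $e^{is\phi_h}$, which are irrelevant to the modulus estimates.
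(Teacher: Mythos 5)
Your proof is correct and follows essentially the same route as the paper's: the paper simply cites the uniform bound $\sum_{h\in(g)}\|h^{-1}K_{s,t}(hx,x)\|_x<C_2$ from \cite[Corollary 3.5]{Wang-Wang} and concludes by integrating against $\phi_1\phi_2$, whereas you unpack that cited estimate via the Gaussian off-diagonal decay, the \v{S}varc--Milnor comparison $d(hx,x)\gtrsim\ell(h)$ on a compact set, and the at-most-exponential volume growth of a finitely generated group. Your observation that the polynomial-growth hypothesis on $(g)$ is not needed here is also consistent with the paper, which does not assume it in this proposition.
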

\begin{proof}
	Fix $s\in\R$ and $\phi_1,\phi_2\in C_c(M)$. Let $K_{s,t}$ be the Schwartz kernel of $e^{-t(D^s)^2}$. By standard estimates for the heat kernel on cocompact manifolds (see for example \cite[Corollary 3.5]{Wang-Wang}), there exists a constant $C_2$, depending on $t$, such that
$$\sum_{h\in(g)}\norm{h^{-1}K_{s,t}(hx,x)}_x<C_2$$
for all $x\in M$, where the norm is taken in $\End(\cS_\sL)$. Then
	\begin{align*}
	\label{eq heat g trace}
	\sum_{h\in(g)}|\theta(h)\cdot\Str(\phi_1 T_{h^{-1}}e^{-t(D^s)^2}\phi_2)|&\leq\sum_{h\in(g)}|\Str(\phi_1 T_{h^{-1}}e^{-t(D^s)^2}\phi_2)|\\
	&\leq C_1\sum_{h\in(g)}\int_M\phi_1(x)\phi_2(x)\norm{h^{-1}K_{s,t}(hx,x)}_x\,dx\\
	&\leq C_1 C_2\|\phi_1\phi_2\|_{L^1}.
	\end{align*}
	for some constant $C_1$,
by Fubini's theorem.
Hence $e^{-t(D^s)^2}$ is of $(\sigma^s,g)$-trace class. 
\end{proof}
Next, let $\Omega\subseteq\Gamma$ be a subset such that
\begin{enumerate}[(i)]
	\item for all $h\in(g)$, there exists some $k\in\Omega$ such that $k^{-1}gk=h$;
	\item if $k_1,k_2\in\Omega$ and $k_1\neq k_2$, then $k_1^{-1}gk_1\neq k_2^{-1}gk_2$.
\end{enumerate}
Let $c$ be a cut-off function for the action of $\Gamma$ on $M$. One checks that the function $c^g\in C_c^\infty(M)$ defined by
\begin{equation}
\label{eq cg}
c^g(x)=\sum_{k\in\Omega}c(k^{-1}x)
\end{equation}
is a cut-off function for the action of $Z^g$ on $M$.
\begin{proposition}
Let $\alpha$ and $\sigma$ be as in \eqref{eq alpha} and \eqref{eq sigma}. Let $(g)\subseteq\Gamma$ be the conjugacy class of an $\alpha$-regular element and $c^g$ as in \eqref{eq cg}. Then
$$\str^{(g)}_{\sigma^s} e^{-t(D^s)^2}=\int_M e^{-is\phi_g(x)}c^g(x)\Str(g^{-1}K_{s,t}(gx,x))\,dx,$$
where $K_{s,t}$ is the Schwartz kernel of $e^{-t(D^s)^2}$.
\end{proposition}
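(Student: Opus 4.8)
The plan is to reduce the identity to the integral formula of Lemma~\ref{lem integral formula}, combined with the phase identity \eqref{eq smallclaim}, by reparametrising the sum over $(g)$ using the transversal $\Omega$ and performing a change of variables that moves each summand onto the $g$-summand.

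First I would apply Lemma~\ref{lem integral formula} to the diagonal entries of $e^{-t(D^s)^2}$, in its supertrace form; this is legitimate since Proposition~\ref{prop heat trace} guarantees that $e^{-tD^s_-D^s_+}$ and $e^{-tD^s_+D^s_-}$ are of $(\sigma^s,g)$-trace class. Adapting the notation to the family $\{\sigma^s\}$ by replacing $\phi$ with $s\phi$ and $\theta$ with $\theta^s=e^{2\pi i s\psi}$, as in the remarks following Corollary~\ref{cor trace any s}, this gives
$$\str^{(g)}_{\sigma^s}\big(e^{-t(D^s)^2}\big)=\sum_{h\in(g)}\theta^s(h)\int_M e^{-is\phi_h(x)}\,c(x)\,\Str\big(h^{-1}K_{s,t}(hx,x)\big)\,dx$$
for any cut-off function $c$ for the $\Gamma$-action. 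By the defining properties of $\Omega$, every $h\in(g)$ is uniquely of the form $k^{-1}gk$ with $k\in\Omega$, so the sum becomes a sum over $k\in\Omega$ of the corresponding integrals.

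Next, in the $k$-th summand I would substitute $x=k^{-1}y$, which is an isometry of $M$ and hence preserves the Riemannian volume; using $(k^{-1}gk)k^{-1}=k^{-1}g$, the kernel factor becomes $K_{s,t}(k^{-1}gy,k^{-1}y)$, and the $(\Gamma,\sigma^s)$-invariance of $K_{s,t}$ from Definition~\ref{def kernels} (with $\gamma=k^{-1}$) rewrites this as $e^{is\phi_{k^{-1}}(gy)}k^{-1}K_{s,t}(gy,y)k\,e^{-is\phi_{k^{-1}}(y)}$. Pulling out the scalar phases and using the cyclicity of the fibrewise supertrace $\Str$ to cancel the conjugation by $k$ — which preserves the $\Z_2$-grading, since $\Gamma$ respects the spin structure — the integrand of the $k$-th term becomes
$$\Big(\theta^s(k^{-1}gk)\,e^{-is\phi_{k^{-1}gk}(k^{-1}y)}\,e^{is(\phi_{k^{-1}}(gy)-\phi_{k^{-1}}(y))}\Big)\,c(k^{-1}y)\,\Str\big(g^{-1}K_{s,t}(gy,y)\big).$$
The scalar in parentheses is exactly identity \eqref{eq smallclaim}, applied with $h=g$ and $\gamma=k$, evaluated at the point $k^{-1}y$, and with the replacements $\phi\to s\phi$, $\theta\to\theta^s$; together with $\theta^s(g)=1$, a one-line rearrangement shows it equals $e^{-is\phi_g(y)}$. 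Hence the $k$-th term equals $\int_M e^{-is\phi_g(y)}\,c(k^{-1}y)\,\Str(g^{-1}K_{s,t}(gy,y))\,dy$.

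Finally, summing over $k\in\Omega$ and interchanging the sum with the integral — justified by the absolute convergence established in the proof of Proposition~\ref{prop heat trace}, using $|e^{-is\phi_g(y)}|=1$ and that $\Str$ is dominated by the fibrewise operator norm — the factors $c(k^{-1}y)$ assemble, by \eqref{eq cg}, into $c^g(y)=\sum_{k\in\Omega}c(k^{-1}y)$, yielding exactly $\int_M e^{-is\phi_g(x)}c^g(x)\,\Str(g^{-1}K_{s,t}(gx,x))\,dx$. The only genuine work is the phase bookkeeping in the third step; however, this has in effect already been carried out in the proof of Proposition~\ref{prop trace properties} in verifying that $m_1$ is $\Gamma$-invariant, so the main obstacle is simply to match the substitutions up correctly rather than to prove anything new.
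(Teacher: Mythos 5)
Your argument is correct and follows the same skeleton as the paper's proof: apply Lemma \ref{lem integral formula} (in supertrace form, using Proposition \ref{prop heat trace} for the $(\sigma^s,g)$-trace class property), reparametrise the sum over $(g)$ by the transversal $\Omega$, perform the change of variables together with the $(\Gamma,\sigma^s)$-invariance of the kernel, identify the resulting scalar phase, and assemble $\sum_{k\in\Omega}c(k^{-1}y)$ into $c^g(y)$ via \eqref{eq cg}. The one place where you genuinely diverge is the phase bookkeeping: the paper devotes most of its proof to deriving a fresh identity (its \eqref{eq small identity}, expressed through $\phi_k$ at $k$-translated points) from \eqref{cond 1} and Lemma \ref{lem invert and swap}, whereas you obtain the equivalent identity, written through $\phi_{k^{-1}}$ at untranslated points, by specialising the already-proven \eqref{eq smallclaim} to $h=g$, $\gamma=k$, $x=k^{-1}y$ and using $\theta^s(g)=1$. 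I checked this specialisation and it does give exactly the scalar your substitution produces, so your route is a legitimate and somewhat shorter alternative that avoids redoing the cocycle manipulations; the paper's self-contained derivation buys an explicit identity in a form it reuses verbatim, but nothing essential is lost in your version.
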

\begin{proof}
We begin by establishing a useful algebraic identity:
\begin{equation}
\label{eq small identity}
\theta^s(k^{-1}gk)e^{-is(\phi_{k^{-1}gk}(k^{-1}x)+\phi_k(k^{-1}gx)-\phi_k(k^{-1}x))}=e^{-is\phi_g(x)},
\end{equation}
where $\theta^s$ is as in \eqref{eq theta s}. 

It suffices to establish this identity for $s=1$. The case of general $s$ follows by replacing $\theta$ and $\phi$ by $\theta^s$ and $s\phi$ respectively, similar to the proof of Corollary \ref{cor trace any s}. To proceed, note that by \eqref{cond 1} applied with $x\to k^{-1}x$, $\gamma\to k^{-1}gk$, and $\gamma'\to k$, the function
$$x\mapsto-\phi_{k^{-1}gk}(k^{-1}x)-\phi_k(k^{-1}gx)+\phi_{gk}(k^{-1}x)$$
is constant on $M$, hence
\begin{equation}
\label{eq small step}
e^{-i(\phi_{k^{-1}gk}(k^{-1}x)+\phi_k(k^{-1}gx))}=e^{-i\phi_{gk}(k^{-1}x)}\cdot C_1
\end{equation}
for some $C_1\in\U(1)$. Letting $x=kx_0$ and using that $\phi_e\equiv 0$ gives
$$C_1=e^{-i\phi_k(k^{-1}gkx_0)}=\bar\sigma(k,k^{-1}gk).$$
Now by the cocycle identity and Lemma \ref{lem invert and swap},
\begin{align*}
\theta(k^{-1}gk)&=\sigma(g^{-1},k)\sigma(k^{-1},g^{-1}k)\\
	&=\sigma(k^{-1},g^{-1})\sigma(k^{-1}g^{-1}k)\\
	&=\bar\sigma(g,k)\sigma(k^{-1}g^{-1},k).
\end{align*}
Using this and \eqref{eq small step}, the claimed equality \eqref{eq small identity} becomes
\begin{equation}
\label{eq small identity 2}
\bar\sigma(g,k)\sigma(k^{-1}g^{-1},k)\bar\sigma(k,k^{-1}gk)e^{-i(\phi_{gk}(k^{-1}x)+\phi_k(k^{-1}x))}=e^{-i\phi_g(x)}.	
\end{equation}
Applying \eqref{cond 1} with $x\to k^{-1}x$, $\gamma\to k$, and $\gamma'\to g$ now shows that the function
$$x\mapsto-\phi_k(k^{-1}x)-\phi_g(x)+\phi_{gk}(k^{-1}x)$$
is constant on $M$, hence
\begin{equation}
\label{eq small step}
e^{-i(\phi_{gk}(k^{-1}x)+\phi_k(k^{-1}x))}=e^{-i\phi_{g}(x)}\cdot C_2
\end{equation}
for some $C_2\in\U(1)$. Letting $x=kx_0$ shows that
$$C_2=e^{i\phi_g(kx_0)}=\sigma(g,k).$$
This, together with the computation 
\begin{align*}
\sigma(k^{-1}g^{-1},k)\bar\sigma(k,k^{-1}gk)=\sigma(k^{-1}g^{-1},k)\sigma(k^{-1}g^{-1}k,k^{-1})=1,
\end{align*}
establishes \eqref{eq small identity 2} and therefore \eqref{eq small identity}.

Next, by Lemma \ref{lem integral formula} and a computation similar to \eqref{eq trace computation}, we can use the set $\Omega$ defined above to write $\str^{(g)}_{\sigma^s}e^{-t(D^s)^2}$ as
\begin{align*}
&\sum_{k\in\Omega}\theta^s(k^{-1}gk)\int_M e^{-is(\phi_{k^{-1}gk}(k^{-1}x)+\phi_k(k^{-1}gx)-\phi_k(k^{-1}x))}c(k^{-1}x)\Str(g^{-1}K_{s,t}(gx,x))\,dx.
\end{align*}
Similarly to the proof of Proposition \ref{prop heat trace}, we can take the sum inside the integral. Then by \eqref{eq small identity} and the definition of $c^g$ given by \eqref{eq cg}, this is equal to
\begin{align*}
	\int_M e^{-is\phi_g(x)}c^g(x)\Str(g^{-1}K_{s,t}(gx,x))\,dx.&\qedhere
\end{align*}
\end{proof}
We can now finish the proof of Theorem \ref{thm main1}.
\begin{proof}[Proof of Theorem \ref{thm main1}]
For (i), first note that
$$(D^s)^2e^{-t(D^s)^2}=\frac{1}{2}[D^s,D^se^{-t(D^s)^2}]_s.$$
It follows from Corollary \ref{cor supertrace} that
\begin{align}
\frac{d(\str^{(g)}_{\sigma^s} e^{-t(D^s)^2})}{dt}&=-\str_{\sigma^s}^{(g)}\big((D^s)^2e^{-t(D^s)^2}\big)=0,
\end{align} 
hence the function $t\mapsto\str^{(g)}_{\sigma^s} e^{-t(D^s)^2}$ is constant in $t>0$. Let $K_{s,t}$ be the Schwartz kernel of the operator $e^{-t(D^s)^2}$. By standard heat kernel estimates,
in the limit $t\to 0$ the integral
$$\int_M e^{-is\phi_g(x)}c^g(x)\Str(g^{-1}K_{s,t}(gx,x))\,dx$$
localises to arbitrarily small neighbourhoods of the fixed-point submanifold $M^g$; see for example \cite[Lemma 4.10]{HochsWang2}. Pick a sufficiently small tubular neighbourhood and identify it with the normal bundle $\cN$ of $M^g$ in $M$. Since the curvature of the twisting bundle $\sL\to M$ is $\Gamma$-invariant, the standard asymptotic expansion of $K_{s,t}$ (see \cite[Theorem 6.11]{BGV}) applies. The same argument as in the compact case (see \cite[Theorem 6.16]{BGV}) then shows that the above integral equals
\begin{equation}
\label{eq twisted A hat}
\int_{M^g}e^{-is\phi_g}c^g\cdot \frac{\widehat A(M^g)\cdot e^{-s\omega/2\pi i}}{\det(1-g e^{-R^{\mathcal{N}}/2\pi i})^{1/2}},
\end{equation}
where we have used that $c^g|_{M^g}$ is a  cut-off function for the action of $Z^g$ on $M^g$.

By Remark \ref{rem fixed points}, $e^{-is\phi_g}(x)$ is constant on each connected component of $M^g$. Further, the support of $c^g$ is compact and thus intersects only finitely many of these connected components, $M^g_1,\ldots,M^g_N$. For each $j=1,\ldots,N$, pick a point $x_j\in M_j$, and let $\cN_j$ be the restriction of $\cN$ to $M^g_j$. By Theorem \ref{thm trace relation} (ii), together with the above discussion, we have
\begin{align*}
	(\tau^{(g)}_{\sigma^s})_*\Ind_{\Gamma,\sigma^s}(D^s)&=\str^{(g)}_{\sigma^s}(e^{-t(D^s)^2})\\
	&=\sum_{j=1}^m\int_{M^g_j}e^{-is\phi_g(x_j)}c^g\cdot\frac{\widehat A(M^g_j)\cdot e^{-s\omega/2\pi i}|_{M^g_j}}{\det(1-g e^{-R^{\mathcal{N}_j}/2\pi i})^{1/2}}\\
	&=\sum_{k=0}^{\dim M/2}\frac{(-s)^k}{k!}\sum_{j=1}^m\int_{M^g_j}c^g\cdot\frac{\widehat A(M^g)\cdot\left(i\phi_g(x_j)+\frac{\omega}{2\pi i}\right)^k|_{M^g_j}}{\det(1-g e^{-R^{\mathcal{N}_j}/2\pi i})^{1/2}}.
\end{align*}

For (ii), observe that we can, by a straightforward suspension argument, reduce to the case where $M$ is even-dimensional. If $M$ admits a $\Gamma$-invariant metric of positive scalar curvature, then $\Ind_{\Gamma,\sigma^s}D^s$ vanishes for all $s\in (0,\delta)$ for some $0<\delta$, by Proposition \ref{cor vanishing higher}. Since the map
$$s\mapsto(\tau^{(g)}_{\sigma^s})_*(\Ind_{\Gamma,\sigma^s}D^s)$$
is a polynomial in $s$, it must vanish identically on $\mathbb{R}$, hence
\begin{equation}
\label{eq many components vanishing}
\sum_{j=1}^m\int_{M^g_j}c^g\cdot\frac{\widehat A(M^g)\cdot\left(\phi_g(x_j)-\frac{\omega}{2\pi}\right)^k|_{M^g_j}}{\det(1-g e^{-R^{\mathcal{N}_j}/2\pi i})^{1/2}}=0
\end{equation}
for each $k\geq 0$, where we may also assume that each $M^g_j$ is even-dimensional. 

In the case that $M^g$ is connected, we may choose the point $x_0$ from \eqref{cond 2} to lie in $M^g$, whence Remark \ref{rem fixed points} implies that $e^{-is\phi_g}\equiv 1$ on $M^g$.
The formula \eqref{eq many components vanishing} now simplifies to
$$\widehat{A}_g(M,\omega^k)=\int_{M^g}c^g\cdot\frac{\widehat A(M^g)\cdot\omega^k|_{M^g}}{\det(1-g e^{-R^{\mathcal{N}}/2\pi i})^{1/2}}=0$$
for each $k\geq 0$.
\end{proof}
\begin{remark}
\label{rem Moscovici-Wu}
It should be possible to derive part (ii) of Theorem \ref{thm main1} independently of any growth assumptions on $(g)$ by using methods similar to the proof of \cite[Theorem 3.4]{Moscovici-Wu}; see also \cite[Remark A.2]{XY2}. In particular, this would imply that if the Baum-Connes conjecture holds for $\Gamma$, then the map $(\tau^{(g)}_\sigma)_*$ from \eqref{eq tau induced} can be defined via the $\sigma$-weighted $(g)$-supertrace $\str^{(g)}_\sigma$ from \eqref{eq weighted supertrace}.

\end{remark}
\begin{proof}[Proof of Corollary \ref{cor}]
	By a suspension argument, we may assume without loss of generality that both $M$ and $M^g$ are even-dimensional. Further, since the vanishing property is preserved under sums of forms, we may to restrict our attention to the case where
	$$\omega=\omega_1\omega_2\ldots\omega_m,$$
	for some $m\leq\dim M/2$, where each $\omega_i$ is the lift of a differential form on $M/\Gamma$ representing a class in $f^*H^2(\underline B\Gamma,\R)$. For each $i=1,\ldots,m$, let $s_i\in\R$. Then applying to argument from the proof of Theorem \ref{thm main1} to $\sum_{i=1}^m s_i\omega_i$ instead of $s\omega$, with the twisted Dirac operator defined accordingly, shows that if $M$ admits a $\Gamma$-invariant metric of positive scalar curvature, then there exists a $\delta>0$ such that
	$$\sum_{k=0}^{\dim M/2}\frac{1}{k!}\int_{M^g}c^g\cdot\frac{\widehat A(M^g)\cdot\left(\frac{s_1\omega_1+\cdots+s_m\omega_m}{2\pi i}\right)^k|_{M^g}}{\det(1-g e^{-R^{\mathcal{N}}/2\pi i})^{1/2}}=0$$
	whenever $s_i\in(0,\delta)$ for all $i$. Since the left-hand side is a polynomial in the variables $s_1,\ldots,s_m$, it vanishes identically on $\mathbb{R}^m$. In particular, the coefficient of $s_1 s_2\ldots s_m$ is zero, and this is equal to 
	$$\frac{(2\pi i)^{-m}}{m!}\int_{M^g}c^g\cdot\frac{\widehat A(M^g)\cdot\omega|_{M^g}}{\det(1-g e^{-R^{\mathcal{N}}/2\pi i})^{1/2}}.$$
	This concludes the proof.	
\end{proof}
\subsection{Special cases}
\label{subsec special cases}
\hfill\vskip 0.05in
\noindent We discuss two special cases of Theorem \ref{thm main1} that have already appeared elsewhere in the literature. These correspond to the extreme cases when either the conjugacy class or the multiplier is trivial.
\subsubsection{Free action and $(g)=(e)$}
\hfill\vskip 0.05in
\noindent In this case, the canonical trace
\begin{align}
\label{eq tau e}
	\tau_\sigma^{(e)}\colon\mathbb{C}^\sigma\Gamma&\to\mathbb{C}\nonumber\\
	\sum_{\gamma\in\Gamma}a_\gamma\bar\gamma&\mapsto a_e
\end{align}
extends continuously to a trace $C^*_r(\Gamma)\to\C$ and induces a linear map 
$$\big(\tau_\sigma^{(e)}\big)_*\colon K_0(C^*_r(\Gamma,\sigma))\to\C.$$
Since the action of $\Gamma$ on $M$ is free, we may work with the classifying space $B\Gamma$ instead of $\underline{B}\Gamma$, as done in \cite{MathaiTwisted}. Let $f\colon M/\Gamma\to B\Gamma$ be the classifying map for $M$, and let $[\beta]\in H^2(B\Gamma,\R)$. Let $\omega_0$ be a differential form on the quotient manifold $M/\Gamma$ such that $f^*[\beta]=[\omega_0]$, and let $\omega$ be the $\Gamma$-invariant lift of $\omega_0$ to $M$.

Since $\phi_e\equiv 0$, Theorem \ref{thm main1} (i) reduces to the twisted $L^2$-index theorem in of Mathai \cite[Theorem 3.6]{MathaiTwisted} for the case of the spin-Dirac operator, namely that
$$\big(\tau^{(e)}_\sigma\big)_*\Ind_{\Gamma,\sigma}(D)=\int_{M}c\cdot\widehat A(M)\cdot e^{-\omega/2\pi i}=\int_{M/\Gamma}\widehat{A}(M/\Gamma)\cdot e^{-\omega_0/2\pi i},$$
where $c$ is a cut-off function for the $\Gamma$-action on $M$. Theorem \ref{thm main1} (ii) recovers the result if the $M/\Gamma$ admits a metric of positive scalar curvature, then for each non-negative integer $k$,
	$$\int_{M}\widehat{A}(M/\Gamma)\cdot\omega_0^k=0.$$
	This is \cite[Corollary 1]{MathaiTwisted}.

\subsubsection{Trivial multiplier $\sigma\equiv 1$}
\hfill\vskip 0.05in
\noindent In this case, the projectively invariant Dirac operator $D$ is simply the $\Gamma$-invariant Dirac operator acting on sections of the spinor bundle $\cS\to M$. The element $\Ind_{\Gamma,\sigma}(D)$ reduces to the usual $\Gamma$-equivariant higher index $\Ind_{\Gamma}(D)\in K_0(C^*_r(\Gamma))$ for cocompact actions \cite{BCH}. The trace $\tau^{(g)}_\sigma$ is then the unweighted trace $\tau^{(g)}\colon\C\Gamma\to\C$ from \eqref{eq tau g}.

Theorem \ref{thm main1} (i) then states that if $(g)$ has polynomial growth, then
$$\tau^{(g)}_*\Ind_\Gamma(D)=\int_{M^g}c^g\cdot\frac{\widehat A(M^g)}{\det(1-g e^{-R^{\mathcal{N}}/2\pi i})^{1/2}},$$
where we have implicitly taken $\omega$ to be zero. This recovers the formula \cite[Theorem 6.1]{Wang-Wang} in the case of the spin-Dirac operator.

\hfill\vskip 0.3in
\section{A neighbourhood PSC obstruction}
\label{sec Callias obstr}
In this section we prove Theorem \ref{thm main2}. We will make use of a Callias-type index theorem for projectively invariant operators.

\subsection{Projectively invariant Callias-type operators}
\label{subsection proj Callias}
\hfill\vskip 0.05in
\noindent Let us begin with a general definition and discussion of projectively invariant Callias-type operators and their higher indices.

Let $M$ be a complete Riemannian manifold on which a discrete gorup $\Gamma$ acts properly and isometrically, and suppose that $H^1(M)=0$. Let 
$$f\colon M/\Gamma\to\underline{B}\Gamma$$
be the classifying map of $M$. Let $[\beta]\in H^2(\underline{B}\Gamma,\mathbb{R})$, and let $[\omega_0]=f^*[\beta]$ in the de Rham cohomology of the orbifold $M/\Gamma$. Let $\omega$ be the $\Gamma$-invariant lift of $\omega_0$ to $M$. As in subsection \ref{subsec multipliers}, we obtain a one-form $\eta$, a family $\phi$ of functions on $M$, and a family of multipliers $\sigma^s$ on $\Gamma$ parameterised by $s\in\R$.
\begin{remark}
\label{rem alternative sigma}
The cocycles $\alpha$ and $\sigma^s$ from \eqref{eq alpha} and \eqref{eq sigma} can be defined equivalently in terms of the above data restricted to any submanifold $P\subseteq M$ preserved by the action of $\Gamma$. Indeed, by working with the de Rham differential on $P$ instead of $M$, \eqref{cond 1} implies that
the family
$$\phi|_P\coloneqq\{\phi_\gamma|_P\colon\gamma\in\Gamma\}$$
satisfies
\begin{equation*}
d_P(\phi_\gamma|_P+\gamma^{-1}\phi_{\gamma'}|_P-\phi_{\gamma'\gamma}|_P)=0.
\end{equation*}
It follows that $\alpha$ and $\sigma^s$ can be defined equivalently as
$$\alpha(\gamma,\gamma')=\frac{1}{2\pi}(\phi_\gamma|_P(x)+\phi_\gamma|_P(\gamma'x)-\phi_{\gamma'\gamma}|_P(x)),$$
$$\sigma^s(\gamma,\gamma')=e^{2\pi is\alpha(\gamma,\sigma)}.$$


\end{remark}

Since the discussion in rest of this subsection applies uniformly to $\sigma^s$ for any $s$ with only minor and obvious adjustments, let us now fix $s=1$ and write $\sigma=\sigma^1$.

Let $E\to M$ be a $\mathbb{Z}_2$-graded Clifford bundle over $M$ such that $L^2(E)$ is equipped with the projective action $T$ from Definition \ref{def proj action}. Let $D$ be an odd-graded Dirac operator acting on smooth sections of $E$ that commutes with $T$.

\begin{definition}
\label{def Callias}
An odd-graded, $\Gamma$-equivariant fibrewise Hermitian bundle endomorphism $\Phi$ of $E$ is \emph{admissible for $D$} if $D\Phi+\Phi D$ is an endomorphism of $E$ such that there exists a cocompact subset $Z\subseteq M$ and a constant $C>0$ such that the pointwise estimate
\begin{equation}
\label{eq Callias estimate}
\Phi^2\geq\norm{D\Phi+\Phi D}+C
\end{equation}
holds over $M\setminus Z$. In this setting, $D+\Phi$ is called a \emph{$(\Gamma,\sigma)$-invariant Callias-type operator}.
\end{definition}
\begin{remark}
\label{rem projective endomorphism}
A bundle endomorphism on $L^2(E)$ commutes with the projective action $T$ from Definition \ref{def proj action} if and only if it commutes with the unitary action $U$ of the group.
\end{remark}

A key property of the operator $D+\Phi$ is that it has an index in $K_*(C^*_r(\Gamma,\sigma))$. The indices of such operators can be defined either via Roe algebras, similar to what was done in subsection \ref{subsec twisted higher index}, or using Hilbert $C^*_r(\Gamma,\sigma)$-modules. We will take the latter approach in order to frame the discussion in parallel with that in \cite{GHM3} for the untwisted case.

To this end, given elements $a=\sum_{\gamma\in\Gamma}a_\gamma\gamma\in\mathbb{C}^\sigma\Gamma$ and sections $s,s_1,s_2\in C_c(E)$, the formulas
\begin{align}
\label{eq Hilbert modules structure}
	(s\cdot a)(x)&\coloneqq\sum_{\gamma\in\Gamma}a_{\gamma} (T_{\gamma^{-1}} s)(x)\nonumber\\
(s_1,s_2)(\gamma)&\coloneqq(s_1,T_\gamma s_2)
\end{align}
define a pre-Hilbert $\C^\sigma\Gamma$-module structure on $C_c(E)$.
\begin{definition}
\label{def E sigma}
Let $\mathcal{E}^\sigma$ be the Hilbert $C^*_r(\Gamma,\sigma)$-module completion of $C_c(E)$ with respect to \eqref{eq Hilbert modules structure}.
	\end{definition}


The admissibility condition \eqref{eq Callias estimate} implies that the operator $D+\Phi$ is projectively Fredholm in the following sense:
\begin{proposition} \label{thm Guo}
There exists a cocompactly supported, $G$-invariant continuous function $f$ on $M$ such that 
\beq{eq def F}
F\coloneqq  (D+ \Phi)\bigl(  (D + \Phi)^2 + f \bigr)^{-1/2}\in\cB(\cE^\sigma),
\eeq
such that the pair $(\cE^\sigma, F)$ is a cycle in $\KK(\C, C^*_r(\Gamma,\sigma))$. The class $[\cE^\sigma,F]$ is independent of the choice of $f$.
\end{proposition}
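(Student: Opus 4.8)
The plan is to follow the by-now-standard Callias-operator argument (as in \cite{GHM3}, Anghel, Bunke, etc.) adapted to the projective setting, using that the twisted Roe-algebra framework of subsection \ref{subsec twisted higher index} carries over verbatim. First I would establish the mapping properties: since $\Phi$ is an odd-graded, $\Gamma$-equivariant (hence, by Remark \ref{rem projective endomorphism}, projectively equivariant) bundle endomorphism which is fibrewise Hermitian and bounded on compact sets, the operator $D+\Phi$ with domain the smooth compactly supported sections is essentially self-adjoint and regular as an unbounded operator on the Hilbert $C^*_r(\Gamma,\sigma)$-module $\cE^\sigma$; this follows as in the untwisted case because $D$ itself is essentially self-adjoint and regular (it has bounded propagation, finite-width wave operator) and $\Phi$ is a symmetric bounded perturbation on compact sets together with the admissibility estimate controlling it off a cocompact set. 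I would then fix any cocompactly supported, $\Gamma$-invariant, nonnegative continuous $f$ on $M$ which is large enough on $Z$ so that $(D+\Phi)^2+f\geq c>0$ as a positive regular operator (possible since $(D+\Phi)^2\geq \Phi^2-\|D\Phi+\Phi D\|\geq C$ off $Z$ by \eqref{eq Callias estimate}, and off a compact set $f$ can be taken $0$); this makes $\bigl((D+\Phi)^2+f\bigr)^{-1/2}$ a bounded adjointable operator on $\cE^\sigma$, so $F$ in \eqref{eq def F} is well-defined and bounded adjointable.

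The heart of the argument is to verify the Kasparov cycle conditions: $F=F^*$, and $F^2-1$ and $[F,a]$ lie in $\cK(\cE^\sigma)$ for all $a\in C^*_r(\Gamma,\sigma)$ — equivalently, since $C_c(M)$ acts on $\cE^\sigma$, for all $a\in C_c(M)$ (the algebra $\C$ here acts by scalars, so the relevant compactness is that of $\chi\cdot(F^2-1)$ and $[F,\chi]$ for $\chi\in C_c(M)$, which then upgrades because the grading and the $C^*_r(\Gamma,\sigma)$-structure are built from such $\chi$). Self-adjointness is immediate from self-adjointness of $D+\Phi$ and functional calculus. For $F^2-1 = -f\bigl((D+\Phi)^2+f\bigr)^{-1}$: this is supported (in the operator sense) where $f\neq 0$, hence in a cocompact set, and it is a negative-order operator there, so multiplying by any $\chi\in C_c(M)$ gives a $\Gamma$-compact operator — precisely a compact operator on $\cE^\sigma$ by the twisted analogue of Kasparov's lemma that $C_c(M)$-local, cocompactly supported, negative-order operators are $\cK(\cE^\sigma)$. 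For $[F,\chi]$: write $F$ via the integral formula $F = \tfrac{2}{\pi}\int_0^\infty (D+\Phi)\bigl((D+\Phi)^2+f+\mu^2\bigr)^{-1}\,d\mu$, commute $\chi$ through, and use that $[(D+\Phi),\chi]=c(d\chi)$ is a bounded operator supported in the compact set $\supp(\chi)$; estimating the resulting $\mu$-integral exactly as in the untwisted Callias case shows $[F,\chi]$ is a norm-limit of $C_c(M)$-local cocompactly-supported operators of order $-1$, hence compact. Finally, independence of $f$: given $f_0,f_1$ admissible choices, the straight-line homotopy $f_t=(1-t)f_0+tf_1$ is again admissible (the estimate \eqref{eq Callias estimate} is unchanged, and positivity of $(D+\Phi)^2+f_t$ persists), giving an operator homotopy of Kasparov cycles, so $[\cE^\sigma,F_0]=[\cE^\sigma,F_1]$ in $\KK(\C,C^*_r(\Gamma,\sigma))$.

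The step I expect to be the main obstacle is the compactness of $[F,\chi]$ and of $\chi(F^2-1)$ in the Hilbert module sense — that is, genuinely landing in $\cK(\cE^\sigma)$ rather than merely being bounded. In the untwisted cocompact-support setting this rests on the fact that $e^{-t(D+\Phi)^2}\chi$, or more precisely $\chi_1 g(D+\Phi)\chi_2$ for $g$ a Schwartz function and $\chi_i\in C_c(M)$, is $\Gamma$-compact; the projective version requires checking that the Schwartz kernels of such operators are $(\Gamma,\sigma)$-invariant in the sense of Definition \ref{def kernels} and cocompactly supported, so that they define elements of the twisted Roe algebra $C^*(M;L^2(E))^{\Gamma,\sigma}\cong \cK(\cE^\sigma)$ — this is where the group $2$-cocycle $\sigma$ and the phases $e^{is\phi_\gamma}$ must be tracked carefully, exactly as in Lemma \ref{Gamma sigma invariant kernels}. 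Once the dictionary ``$C_c(M)$-local, cocompactly supported, order $\leq -1$, projectively invariant'' $\Rightarrow$ ``$\cK(\cE^\sigma)$'' is in place (which follows from finite propagation speed for $D$ together with the local elliptic estimates, none of which sees the cocycle), the remaining estimates are routine and identical to \cite{GHM3}.
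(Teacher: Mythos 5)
Your proposal is correct and follows essentially the same route as the paper, which simply declares the proof ``analogous to \cite[Theorem 4.19]{Guo}'' with the Hilbert $C^*(G)$-module $\mathcal{E}$ replaced by $\cE^\sigma$; what you have written is precisely the content of that standard Callias argument, with the right identification of the only genuinely new point (tracking the phases $e^{is\phi_\gamma}$ so that the localised operators land in $\cK(\cE^\sigma)$, as in Lemma \ref{Gamma sigma invariant kernels}). One cosmetic remark: for a cycle in $\KK(\C,C^*_r(\Gamma,\sigma))$ the commutator condition $[F,a]\in\cK(\cE^\sigma)$ is vacuous for scalars $a$, so your estimates on $[F,\chi]$ for $\chi\in C_c(M)$ serve only as the localisation tool for proving $F^2-1\in\cK(\cE^\sigma)$, which is exactly how they are used in the cited reference.
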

\begin{proof}
The proof is analogous to that of \cite[Theorem 4.19]{Guo}. Instead of the Hilbert module $C^*(G)$-module $\mathcal{E}$ used there, we work with $\cE^\sigma$.	
\end{proof}

\begin{definition} 
\label{def index}
The \emph{$(\Gamma,\sigma)$-index} of $D+\Phi$ is the class
\[
\Ind_{\Gamma,\sigma}(D + \Phi) \coloneqq  [\cE^\sigma, F] \in K_0(C^*_r(\Gamma,\sigma))\cong\KK(\C,C^*_r(\Gamma,\sigma)).
\]
\end{definition}
\hfill\vskip 0.05in
\subsection{Localisation of projective Callias-type indices}
\label{subsection Callias localisation}
\hfill\vskip 0.05in
\noindent One of the key properties of Callias-type operators in the equivariant setting is that the their indices can be calculated by localising to a cocompact subset of the manifold \cite[Theorem 3.4]{GHM3}. In the projective setting, a similar result holds. For this, we will assume that the $(\Gamma,\sigma)$-invariant Dirac operator $D$ from Definition \ref{def Callias} takes the form of a Dirac operator twisted by a line bundle, as in Definition \ref{def projective Dirac}.

Let $E_0$ be an ungraded $\Gamma$-equivariant Clifford bundle over $M$ equipped with a $\Gamma$-invariant Hermitian connection $\nabla^{E_0}$. Define the Hermitian connection
\begin{equation}
\label{eq nabla L}	
\nabla^{\sL}=d+i\eta
\end{equation}
on a $\Gamma$-equivariantly trivial Hermitian line bundle $\sL\to M$, and form the connection $\nabla^{E_{0,\sL}}=\nabla^{E_0}\otimes 1+1\otimes\nabla^{\sL}$
	on the bundle
	\begin{equation}
	\label{eq E0L}
	E_{0,\sL}\coloneqq E_0\otimes\sL.
	\end{equation}
 In the notation of subsection \ref{subsection proj Callias}, we will take
	\begin{equation*}
	\label{eq EL}
		E=E_\sL\coloneqq E_{0,\sL} \oplus E_{0,\sL}
	\end{equation*}
	where the first copy of $E_{0,\sL}$ is given the even grading, and the second copy the odd grading. Let $D_0$ be the Dirac operator on $E_{0,\sL}$ associated to $\nabla^{E_{0,\sL}}$, and define 
\beq{eq D D0}
D = 
\begin{pmatrix}
0 & D_0 \\ D_0  & 0
\end{pmatrix}
\eeq
on $E_\sL$. Let $\Phi_0$ be $\Gamma$-invariant a Hermitian endomorphism of $E_0$ such that $\Phi^2\geq\norm{D\Phi+\Phi D}+C$ holds outside a cocompact subset $Z\subseteq M$ for some $C>0$. Let
\beq{eq Phi Phi0}
\Phi = \begin{pmatrix}
0 & i\Phi_0\otimes 1 \\ -i\Phi_0\otimes 1  & 0
\end{pmatrix}.
\eeq
Then $D+\Phi$ is a $(\Gamma,\sigma)$-invariant Callias-type operator acting on sections of $E_\sL$, in the sense of Definition \ref{def Callias}.

Let $M_-\subseteq M$ be a $\Gamma$-invariant, cocompact subset containing $Z$ in its interior, such that $N \coloneqq  \partial M_-$ is a (not necessary connected) smooth submanifold of $M$. Let $M_+$ be the closure of the complement of $M_-$, so that $N = M_- \cap M_+$ and $M = M_- \cup M_+$.  We will use the notation
\[
M = M_- \cup_N M_+.
\]
Let $E^N_{0,\sL}$ denote the restriction of $E_{0,\sL}$ to $N$, equipped with the restricted connection $\nabla^{E_{0,\sL}^N}$. By \eqref{eq Callias estimate}, the restriction of $\Phi_0$ to $N$ is fibrewise invertible. Let 
$$E^N_{0,\sL,\pm}\subseteq E^N_{0,\sL}$$ 
be the positive and negative eigenbundles of $\Phi_0$. Clifford multiplication by $i$ times the unit normal vector field $\widehat n$ to $N$ pointing into $M_+$ defines $\Gamma$-invariant gradings on both $E^N_{0,\sL,+}$ and $E^N_{0,\sL,-}$. Define the connections
\begin{equation}
\label{eq connections}
\nabla^{E^N_{0,\sL,\pm}}\coloneqq p_\pm\nabla^{E_{0,\sL}^N}p_\pm
\end{equation}
on $E^{N}_{0,\sL,\pm}$, where $p_\pm\colon E^N_{0,\sL}\to E^N_{0,\sL,\pm}$ are the orthogonal projections. Along with the Clifford action of $TM|_N$ on $E^{N}_{0,\sL,\pm}$, these connections give rise to two Dirac operators 
\begin{equation}
\label{eq Dirac operators}
D^{E^{N}_{0,\sL,+}}\quad\textnormal{and}\quad D^{E^{N}_{0,\sL,-}},
\end{equation}
both odd-graded, acting on sections of $E^{N}_{0,\sL,+}$ and $E^{N}_{0,\sL,-}$ respectively. 

For each group element $\gamma\in\Gamma$, define unitary operators $U^{N,\pm}_\gamma, S^{N,\pm}_\gamma$, and $T^{N,\pm}_\gamma$ on $L^2(E^N_{0,\sL,\pm})$ by:
\begin{itemize}
\item $U^{N,\pm}_\gamma u(x)=\gamma u(\gamma^{-1}x)$;
\item $S_\gamma^{N,\pm} u=e^{i\phi_\gamma|_N}u$;
\item $T_\gamma^{N,\pm}=U_\gamma^N\circ S_\gamma^{N,\pm}$,
\end{itemize}
where $u\in L^2(E^N_{0,\sL,\pm})$ and $x\in N$. Note that \eqref{eq omega} and \eqref{eq derivative of phi} continue to hold if we restrict $\omega$, $\eta$, and $\phi$ to $N$ and work with the de Rham differential on $N$ instead of $M$. It follows that the operator $D^{E^{N}_{0,\sL,+}}$ is equivariant with respect the projective action $T^{N,+}$. Since $\Gamma$ acts on $N$ cocompactly,  $D^{E^{N}_{0,\sL,+}}$ has a $(\Gamma,\sigma)$-invariant higher index
\[
\Ind_{\Gamma,\sigma}\big(D^{E^{N}_{0,\sL,+}}\big)\in K_0(C^*_r(\Gamma,\sigma))
\]
by Definition \ref{def higher index} and \eqref{eq c star g index}. Equivalently, this index can be formulated as in \eqref{eq def F}, using a bounded transform on a Hilbert module. As mentioned previously, we  will adopt the latter in order to follow more closely the exposition of \cite{GHM3}.

With these preparations, we have the following:
\begin{theorem}[$(\Gamma,\sigma)$-Callias-type index theorem]
\label{thm Callias}
\beq{eq index}
\Ind_{\Gamma,\sigma}(D + \Phi) = \Ind_{\Gamma,\sigma}(D^{E^{N}_{0,\sL,+}}
) \in K_0(C^*_r(\Gamma,\sigma)).
\eeq
\end{theorem}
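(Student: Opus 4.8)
The plan is to adapt the proof of the equivariant Callias-type index theorem \cite[Theorem 3.4]{GHM3} to the projective setting, working throughout with the Hilbert $C^*_r(\Gamma,\sigma)$-module $\cE^\sigma$ of Definition \ref{def E sigma} in place of the Hilbert $C^*_r(\Gamma)$-module used there, and with the projective action $T$ in place of the ordinary unitary action. The observation that makes the adaptation essentially formal is Remark \ref{rem projective endomorphism}: a bundle endomorphism commutes with $T$ if and only if it commutes with $U$, so all of the geometric data used to build the boundary operator --- the endomorphism $\Phi_0$, its eigenbundle projections $p_\pm$, the bundles $E^N_{0,\sL,\pm}$, the connections \eqref{eq connections}, and the Dirac operators \eqref{eq Dirac operators} --- are genuinely $\Gamma$-equivariant objects, constructed exactly as in the untwisted case. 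The twist $\sigma$ enters only through the Hilbert-module and $K$-theoretic packaging, and Remark \ref{rem alternative sigma} guarantees that the multiplier attached to the restricted data on $N$ is the same $\sigma$, so that $\Ind_{\Gamma,\sigma}\big(D^{E^{N}_{0,\sL,+}}\big)$ is well defined.

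The argument proceeds in three steps. \emph{Step 1 (product normalisation near $N$).} Using a $\Gamma$-invariant tubular neighbourhood $N \times (-\varepsilon,\varepsilon)\subseteq M$ of $N$, homotope the pair $(D,\Phi)$ --- through $(\Gamma,\sigma)$-invariant admissible Callias-type operators --- to one of product form on the collar, with $D_0$ equal to $c(\widehat n)(\partial_t + D_N)$ up to lower-order terms and $\Phi_0$ independent of the normal coordinate and equal to $\Phi_0|_N$ there; simultaneously arrange $\Phi_0$ to be uniformly large on $M_+$ and on $M_-\setminus(N\times(-\varepsilon,\varepsilon))$, which is possible since $\Phi_0$ is already fibrewise invertible on $M\setminus Z\supseteq M_+$ by \eqref{eq Callias estimate} and $Z\subseteq\mathrm{int}(M_-)$. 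By the homotopy invariance in Proposition \ref{thm Guo}, $\Ind_{\Gamma,\sigma}(D+\Phi)$ is unchanged. \emph{Step 2 (excision / relative index).} Establish, at the level of Hilbert $C^*_r(\Gamma,\sigma)$-modules, the relative-index statement used in \cite{GHM3} (a Hilbert-module form of Bunke's relative index theorem): if two $(\Gamma,\sigma)$-invariant admissible Callias-type operators agree on a $\Gamma$-invariant open set containing the locus where \eqref{eq Callias estimate} may fail, their indices differ by a term computed on a glued-up manifold. Applying this to replace the outer piece $M_+$ by a cylinder $N\times[0,\infty)$ with $\Phi$ large, and using that $D+\Phi$ is invertible away from the collar of $N$ in both manifolds, shows that $\Ind_{\Gamma,\sigma}(D+\Phi)$ depends only on the restriction of the data to $N\times\R$, i.e.\ equals the index of the model Callias-type operator on $N\times\R$ built from $D_N$, $\Phi_0|_N$, and $c(\widehat n)$.

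\emph{Step 3 (model computation).} On $N\times\R$, decompose $E^N_{0,\sL}=E^N_{0,\sL,+}\oplus E^N_{0,\sL,-}$ into the eigenbundles of $\Phi_0|_N$, with gradings induced by $c(i\widehat n)$ as in \eqref{eq Dirac operators}. The harmonic-oscillator/Bott-element computation of \cite[\S3]{GHM3}, together with classical Callias theory, realises the index of the model operator as a Kasparov product with the generator of $K_1(\R)$; this kills the $E^N_{0,\sL,-}$ summand and leaves exactly $\Ind_{\Gamma,\sigma}\big(D^{E^{N}_{0,\sL,+}}\big)\in K_0(C^*_r(\Gamma,\sigma))$. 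Combining Steps 1--3 yields \eqref{eq index}.

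I expect Step 2 to be the main obstacle: one must check that the cutting, capping, and gluing operations are performed $\Gamma$-equivariantly and compatibly with the projective twist --- in particular that the data $\eta$, $\phi_\gamma$, and $\sigma^s$ restrict consistently to the collar and to the cylinder (Remark \ref{rem alternative sigma}), and that the parametrix and finite-propagation estimates underlying the relative index argument go through with $C^*_r(\Gamma,\sigma)$-coefficients. Everything else is either a homotopy covered by Proposition \ref{thm Guo} or the standard model computation, both of which are insensitive to the presence of the multiplier.
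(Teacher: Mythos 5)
Your proposal follows essentially the same route as the paper: the paper's proof consists of establishing the projective relative index theorem (Proposition \ref{thm 1234}, a Hilbert $C^*_r(\Gamma,\sigma)$-module form of Bunke's argument, which is exactly your Step 2 and is likewise the only part worked out in full detail), using it together with homotopy invariance and cocompact modification of the potential to reduce to a model Callias-type operator on the cylinder $N\times\R$, and then computing that model index via the one-dimensionality of the kernel of $i\frac{d}{dt}\pm i\chi$ --- your Steps 1 and 3. Your identification of Remarks \ref{rem alternative sigma} and \ref{rem projective endomorphism} as the reasons the adaptation is essentially formal matches the paper's list of modifications (1)--(4).
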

This is the projective analogue of the equivariant Callias-type index theorem \cite[Theorem 3.4]{GHM3}. The proof is analogous to that in the untwisted setting, once we make the following modifications:	\begin{enumerate}[(1)]
	\item instead of the Hilbert $C^*(G)$-modules used in \cite{GHM3}, we work with Hilbert $C^*_r(\Gamma,\sigma)$-modules;
	\item instead of the $G$-equivariant bundle $S$ used in \cite{GHM3} (resp. bundles derived from $S$), we work with the bundle $E_\sL$ from subsection \ref{subsection Callias localisation} (or bundles derived from it);
	\item instead of $G$-invariant differential operators, we work with $(\Gamma,\sigma)$-invariant operators, as constructed above;
	\item instead of the index $\ind_G$ from \cite[section 3]{GHM3}, we work with $\Ind_{\Gamma,\sigma}$.
	\end{enumerate}
	
Given these similarities, we will for the most part only sketch the proof of Theorem \ref{thm Callias}, and invite the reader who is interested in a more detailed discussion to \cite[section 5]{GHM3}. Nevertheless, let us give a detailed example of how one of the key technical tools used in the proof of \cite[Theorem 3.4]{GHM3} can be adapted to the projective setting, namely the relative index theorem for Callias-type operators \cite[Theorem 4.13]{GHM3}. The twisted analogue of that theorem is as follows. 

For $j=1,2$, let $M_j$, $E_j$, $D_j$ and $\Phi_j$ be as $M$, $E$, $D$, and $\Phi$ were in Definition \ref{def Callias}. Suppose there exist $\Gamma$-invariant, cocompact hypersurfaces $N_j\subseteq M_j$, $\Gamma$-invariant tubular neighbourhoods $U_j\supseteq N_j$, and a $\Gamma$-equivariant isometry $\psi\colon U_1\to U_2$ such that
\begin{itemize}
\item 
 $\psi(N_1) = N_2$;
 \item  $\psi^*(E_2|_{U_2}) \cong E_1|_{U_1}$;
 \item $\psi^*(\nabla_2|_{U_2}) = \nabla_1|_{U_1}$, where $\nabla_j$ is the Clifford connection used to define $D_j$;
 \item  $\Phi_1|_{U_1}$ corresponds to $\Phi_2|_{U_2}$ via $\psi$.
\end{itemize}

Suppose that $M_j = X_j \cup_{N_j} Y_j$ for closed, $\Gamma$-invariant subsets $X_j, Y_j \subseteq M_j$. We identify $N_1$ and $N_2$ via $\psi$ and simply write $N$ for this manifold. Construct
\[
M_3 \coloneqq  X_1  \cup_N Y_2; \qquad M_4 \coloneqq  X_2 \cup_N Y_1.
\]
For $j=3,4$, let $E_j$, $D_j$ and $\Phi_j$  be obtained from the corresponding data on $M_1$ and $M_2$ by cutting and gluing along $U_1 \cong U_2$ via $\psi$. For $j=1,2,3,4$, form the Hilbert $C^*_r(\Gamma,\sigma)$-modules $\cE^\sigma_j$ as in Definition \ref{def E sigma}.

\begin{proposition}
\label{thm 1234}
In the above situation,
\[
\Ind_{\Gamma,\sigma}(D_1 + \Phi_1) + \Ind_{\Gamma,\sigma}(D_2 + \Phi_2) = \Ind_{\Gamma,\sigma}(D_3 + \Phi_3) +  \Ind_{\Gamma,\sigma}(D_4 + \Phi_4)\in K_0(C^*_r(\Gamma,\sigma)).
\]
\end{proposition}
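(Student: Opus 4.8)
The plan is to transcribe the proof of the relative index theorem \cite[Theorem 4.13]{GHM3} into the projective setting, via the substitutions (1)--(4) recorded after the statement of Theorem \ref{thm Callias}; note that nothing in what follows uses Theorem \ref{thm Callias} itself, so this is logically sound. The organising observation is that all four manifolds are assembled from the same four pieces. Identifying $N_1$ with $N_2$ via $\psi$ and writing $N$ for the result, the pieces $X_1, Y_1$ (each with boundary a copy of $N$) and $X_2, Y_2$ (ditto) are glued along collars that, by the hypotheses on $\psi$, all agree with one another together with their Clifford bundles, connections and potentials. Then $M_1 \sqcup M_2$ is obtained by pairing $X_1$ with $Y_1$ and $X_2$ with $Y_2$, whereas $M_3 \sqcup M_4$ is obtained by pairing $X_1$ with $Y_2$ and $X_2$ with $Y_1$. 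Since $\Ind_{\Gamma,\sigma}$ is additive under disjoint unions (the Hilbert module and bounded transform of a disjoint union being the direct sum), it suffices to show that the index of $(D_1+\Phi_1)\oplus(D_2+\Phi_2)$ on $\cE^\sigma_1\oplus\cE^\sigma_2$ equals that of $(D_3+\Phi_3)\oplus(D_4+\Phi_4)$ on $\cE^\sigma_3\oplus\cE^\sigma_4$, i.e. that this common index depends only on the four pieces and not on how their boundary collars are paired.

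First I would perform a normalisation near $N$. By a homotopy of $\Phi_j$, supported in the tubular neighbourhood $U_j$ and through endomorphisms remaining admissible for $D_j$ in the sense of \eqref{eq Callias estimate}, I would arrange that on $U_j\cong N\times(-\varepsilon,\varepsilon)$ the operator $D_j+\Phi_j$ is in product form $c(\widehat n)(\partial_u+B_N)$, where $\widehat n$ is the inward unit normal, $u$ the normal coordinate, and $B_N$ a fixed, self-adjoint, fibrewise invertible $(\Gamma,\sigma)$-invariant operator over $N$ built from the boundary Dirac operator and $\Phi_j|_N$. By Proposition \ref{thm Guo}, the index is the class in $\KK(\C,C^*_r(\Gamma,\sigma))$ of the bounded transform \eqref{eq def F}, and this class is invariant under homotopies through such operators; hence the indices are unchanged, and via $\psi$ the product-form operators now literally coincide on all four manifolds over $N$.

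The main step would be a Mayer--Vietoris / parametrix-patching argument for the Hilbert $C^*_r(\Gamma,\sigma)$-modules. On each glued manifold $P\cup_N Q$ (with $(P,Q)$ one of $(X_1,Y_1),(X_2,Y_2),(X_1,Y_2),(X_2,Y_1)$) I would fix $(\Gamma,\sigma)$-invariant functions $\rho_P,\rho_Q\ge 0$ with $\rho_P^2+\rho_Q^2=1$, equal to $1$ outside the collar on the $P$- resp. $Q$-side, and with $d\rho_P,d\rho_Q$ supported in the collar; the map $s\mapsto(\rho_P s,\rho_Q s)$ then embeds the module isometrically as a summand of $\cE^\sigma_{P\cup U}\oplus\cE^\sigma_{Q\cup U}$. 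Using the bounded transforms $F_j$ of Proposition \ref{thm Guo} and these functions, I would build a single operator $\widetilde F$ on $\cE^\sigma_1\oplus\cE^\sigma_2$, and likewise $\widetilde F'$ on $\cE^\sigma_3\oplus\cE^\sigma_4$, by patching the $X_1$-, $Y_1$-, $X_2$-, $Y_2$-contributions along the common collars. By construction each of $\widetilde F,\widetilde F'$ is block-diagonal with respect to the four summands attached to $X_1,Y_1,X_2,Y_2$, with blocks depending only on the pieces and the (common) collar data; re-ordering the summands therefore gives a unitary of Hilbert $C^*_r(\Gamma,\sigma)$-modules intertwining $\widetilde F$ and $\widetilde F'$, so $[\widetilde F]=[\widetilde F']$ in $K_0(C^*_r(\Gamma,\sigma))$. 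It then remains to check $\widetilde F-(F_1\oplus F_2)\in\cK(\cE^\sigma_1\oplus\cE^\sigma_2)$ and the analogous statement for the primed objects: this reduces, as in the untwisted case, to showing that $[F_j,\rho]$ and the patching error terms lie in $\cK(\cE^\sigma)$, which follows from the local compactness of the relevant resolvent operators together with the cocompactness of the $\Gamma$-action on $N$. Stability of $K_0$ under compact perturbations then yields $\Ind_{\Gamma,\sigma}(D_1+\Phi_1)+\Ind_{\Gamma,\sigma}(D_2+\Phi_2)=[\widetilde F]=[\widetilde F']=\Ind_{\Gamma,\sigma}(D_3+\Phi_3)+\Ind_{\Gamma,\sigma}(D_4+\Phi_4)$.

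The step I expect to be the main obstacle is the parametrix-patching in the third paragraph, specifically verifying that the commutators $[F_j,\rho]$ and the patching errors genuinely lie in $\cK(\cE^\sigma)$: this is where the $\sigma$-twist interacts with the projective module structure \eqref{eq Hilbert modules structure} and where the cocompactness of the action on $N$ is used, and it requires a little care beyond simply quoting \cite[Section 5]{GHM3}. A secondary point to record is that the product-form deformation of $\Phi_j$ near $N$ can be carried out while preserving the admissibility estimate \eqref{eq Callias estimate}; this is routine, since that estimate is an open condition and $N$ lies at finite distance from the cocompact set $Z$ where $\Phi_j$ is already controlled. Everything else is a direct transcription of the untwisted argument under substitutions (1)--(4).
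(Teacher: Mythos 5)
Your route is genuinely different from the paper's. The paper does not normalise to product form near $N$ or patch parametrices at all: following \cite[Theorem 4.13]{GHM3} it forms the single module $\cE^\sigma=\cE^\sigma_1\oplus\cE^\sigma_2\oplus\cE^{\sigma,\op}_3\oplus\cE^{\sigma,\op}_4$ with $F=F_1\oplus F_2\oplus F_3\oplus F_4$, assembles the cutoff functions $\chi_{X_j},\chi_{Y_j}$ into a single odd self-adjoint symmetry $X$ with $X^2=1$ and $XF+FX\in\cK(\cE^\sigma)$, and concludes $[\cE^\sigma,F]=0$ from Bunke's observation \cite[Lemma 1.15]{Bunke} that a Kasparov cycle admitting such an extra Clifford generator restricts to zero along $\C\hookrightarrow\mathbb{C}l$. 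The only analytic input is the pseudolocality statement that, for example, $\chi_{X_1}F_1-F_3\chi_{X_1}$ is compact because the geometric data of $M_1$ and $M_3$ agree over $\supp\chi_{X_1}$.

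The gap in your version sits exactly where you would need that same estimate but do not isolate it. You assert that the patched operators $\widetilde F$ and $\widetilde F'$ are block-diagonal with respect to the four summands attached to $X_1,Y_1,X_2,Y_2$, ``with blocks depending only on the pieces,'' so that re-ordering summands gives a unitary intertwining them on the nose. That is not true for any patching built from the bounded transforms $F_j$: the block of $\widetilde F$ over $X_1$ is a compression of $F_1$, which is a global functional-calculus object on $M_1=X_1\cup_N Y_1$ and remembers $Y_1$, whereas the corresponding block of $\widetilde F'$ is built from $F_3$ and remembers $Y_2$. These blocks agree only modulo $\cK$, and proving that --- i.e.\ that $\rho F_1\rho-\rho F_3\rho\in\cK(\cE^\sigma)$ for $\rho$ supported where $M_1$ and $M_3$ coincide --- is the entire content of the relative index theorem; it does not follow from the compactness of $[F_j,\rho]$, which is the only obstacle you flag. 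If you replace ``unitary intertwining'' by ``unitary intertwining modulo compacts'' and supply this estimate (via finite-propagation approximation of $\chi$ in the functional calculus, or decay of the resolvent where $\Phi^2$ dominates), your cut-and-paste argument goes through; but at that point you have reproduced the analytic core of the paper's proof while still owing the bookkeeping that the Clifford-symmetry trick dispatches in one line, and the product-form normalisation in your second paragraph becomes unnecessary.
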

\begin{proof}
(Compare the proof of \cite[Theorem 4.13]{GHM3}.) Define
$$\cE^\sigma\coloneqq\cE^\sigma_1\oplus\cE^\sigma_2 \oplus \cE_3^{\sigma,\op}\oplus\cE_4^{\sigma,\op},$$
where a superscript $\op$ indicates reversal of the $\mathbb{Z}_2$-grading on the given module. Similar to \eqref{eq def F}, define 
$$F_j\coloneqq  (D_j+ \Phi_j)\bigl(  (D_j + \Phi_j)^2 + f_j \bigr)^{-1/2},$$
for $j=1,2,3,4$, and
$$F \coloneqq  F_1 \oplus F_2 \oplus F_3 \oplus F_4.$$
For $j=1,2$, let $\chi_{X_j}, \chi_{Y_j} \in C^{\infty}(M_j)$ be real-valued functions such that:
\begin{enumerate}[(i)]
\item $\supp(\chi_{X_j})\subseteq X_j \cup U_j$ and $\supp(\chi_{Y_j}) \subseteq Y_j \cup U_j$;
\item $\psi^*(\chi_{X_2}|_{U_2}) = \chi_{X_1}|_{U_1}$ and $\psi^*(\chi_{Y_2}|_{U_2}) = \chi_{Y_1}|_{U_1}$; and
\item $\chi_{X_j}^2 + \chi_{Y_j}^2 = 1$.
\end{enumerate}
We view pointwise multiplication by these functions as operators
\beq{eq chi XYj}
\begin{split}
\chi_{X_1}\colon &\cE_1^\sigma \to \cE_3^\sigma;\\
\chi_{Y_1}\colon &\cE_1^\sigma \to \cE_4^\sigma;
\end{split}
\qquad
\begin{split}
\chi_{Y_2}\colon & \cE_2^\sigma \to \cE_3^\sigma;\\
\chi_{X_2}\colon & \cE_2^\sigma \to \cE_4^\sigma.
\end{split}
\eeq
Define the operator
\[
X \coloneqq  \gamma \begin{pmatrix}
0 & 0 & -\chi_{X_1}^* & -\chi_{Y_1}^* \\
0 & 0 & - \chi_{Y_2}^* & \chi_{X_2}^* \\
\chi_{X_1} & \chi_{Y_2} & 0 & 0 \\
\chi_{Y_1} & -\chi_{X_2} & 0 & 0
\end{pmatrix}\in\mathcal{B}(\cE^\sigma),
\]
where $\gamma$ is the grading operator on $\cE^\sigma$. Then $X$ is an odd, self-adjoint operator on $\cE^\sigma$. Further, using properties (ii) and (iii), one verifies directly that $X^2 = 1.$
Let $\mathbb{C}l$ denote the Clifford algebra generated by $X$. It follows from a discussion analogous to \cite[section 4]{GHM3} that
$$XF+FX\in\mathcal{K}(\cE^\sigma).$$

Since $X$ generates $\mathbb{C}l$ and anticommutes with $F$ modulo $\mathcal{K}(\mathcal{E}^\sigma)$, the pair $(\cE^\sigma, F)$ is a Kasparov $(\mathbb{C}l, C_r^*(\Gamma,\sigma))$-cycle. Its class is mapped to $[\cE^\sigma, F] \in K_*(C^*_r(\Gamma,\sigma))$ by the homomorphism induced by the pullback along the inclusion $\C \hookrightarrow \C l$. By \cite[Lemma 1.15]{Bunke}, that homomorphism is zero. Hence
\begin{equation*}
[\cE^\sigma, F] = 0 \in K_0(C^*_r(\Gamma,\sigma)),
\end{equation*}
which is equivalent to the theorem.
\end{proof}

\begin{proof}[Proof of Theorem \ref{thm Callias}]
The first and most important step is to use Proposition \ref{thm 1234} to reduce the computation of $\Ind_{\Gamma,\sigma}(D+\Phi)$ to the index of a $(\Gamma,\sigma)$-invariant Callias-type operator on the cylinder $N\times\mathbb{R}$. This is done by following the same geometric steps as in \cite[subsection 5.3]{GHM3}, only applied to the bundle $E$ instead of the bundle $S$ used there. Where \cite[Theorem 4.13]{GHM3} was used, we now apply Proposition \ref{thm 1234}. In \cite{GHM3}, a homotopy invariance property of equivariant Callias-type operators \cite[Proposition 4.9]{GHM3} was proved, together with the fact that the index of a Callias-type operator does not change if one modifies the potential $\Phi$ on a cocompact subset \cite[Corollary 4.10]{GHM3}. The proofs of both of these properties carry over to the projective setting after making the modifications (1) -- (4) from above.

More precisely, the above discussion reduces the computation of $\Ind_{\Gamma,\sigma}(D+\Phi)$ to the $(\Gamma,\sigma)$-index of the operator \eqref{eq Callias on cylinder} below. To define this operator, let $E_{0,\sL}$ be as in \eqref{eq E0L}, and denote by $E^N_{0,\sL}$ its restriction to $N$. Let
$$E^{N\times \R}_{0,\sL,\pm} \to N \times \R$$ 
be the pullbacks of $E^N_{0,\sL,\pm}\to N$ along the canonical projection $N\times\R\to N$. Then $E^{N\times\R}_{0,\sL,\pm}$ are Clifford bundles over $T(N\times\R)$, with Clifford action
\[
\widehat c(v , t) = c(v + t\widehat n),
\]
where $v \in TN$, $t \in \R$, $\widehat n$ is the normal vector field to $N$ pointing into $M_+$, and $c$ is Clifford multiplication on $E_{0,\sL}$. Let $\nabla^{E^{N}_{0,\sL,\pm}}$ and $D^{E^{N}_{0,\sL,\pm}}$ be as in \eqref{eq connections} and \eqref{eq Dirac operators}. By pulling back $\nabla^{E^{N}_{0,\sL,\pm}}$ along $N\times\R\to\R$ and composing with $\widehat{c}$, we obtain Dirac operators $D^{E^{N\times\R}_{0,\sL,\pm}}_0$ acting on sections of $E^{N\times\R}_{0,\sL,\pm}$. In particular, operator $D^{E^{N\times\R}_{0,\sL,+}}_0$ is equivariant with respect to the pull-back of the projective action $T^{N,+}$ to $L^2(E^{N\times\R}_{0,\sL,+})$. Let $\chi \in C^{\infty}(\R)$ be an odd function such that $\chi(t) = t$ for all $t \geq 2$. Let $\chi_{N\times\R}$ be its pullback along the projection $N \times \R\to\R$. For our Callias-type operator, we will take two copies of the bundle $E^{N\times\R}_{0,\sL,+}$. Define
\[
D^{E^{N\times\R}_{0,\sL,+}} = 
\begin{pmatrix}
0 & D^{E^{N\times\R}_{0,\sL,+}}_0\\ D^{E^{N\times\R}_{0,\sL,+}}_0 & 0
\end{pmatrix},
\]
acting on smooth sections of $E^{N\times\R}_{0,\sL,+}\oplus E^{N\times\R}_{0,\sL,+}$. Then the endomorphism
\[
\chi^{N \times \R} = \begin{pmatrix}
0 & i\chi_{N\times\R} \\ -i \chi_{N\times\R} & 0
\end{pmatrix}
\]
is admissible for $D^{E^{N\times\R}_{0,\sL,+}}$ in the sense of Definition \ref{def Callias}, and
\begin{equation}
\label{eq Callias on cylinder}
D^{E^{N\times\R}_{0,\sL,+}}+\chi^{N \times \R}
\end{equation}
is a $(\Gamma,\sigma)$-invariant Callias-type operator on $N\times\R$. By the discussion in the first paragraph of this proof, we have
\begin{equation}
\label{eq main red}
\Ind_{\Gamma,\sigma}(D+\Phi)=\Ind_{\Gamma,\sigma}(D^{E^{N\times\R}_{0,\sL,+}}+\chi^{N \times \R}).
\end{equation}

It then suffices to prove that
\begin{equation}
\label{eq final red}
	\Ind_{\Gamma,\sigma}(D^{E^{N\times\R}_{0,\sL,+}}+\chi^{N \times \R})=\Ind_{\Gamma,\sigma}(D^{E^N_{0,\sL,+}}),
\end{equation}
which is the projective analogue of \cite[Proposition 5.7]{GHM3}. For this, note that the operator $D^{E^{N\times\R}_{0,\sL,+}}+\chi^{N \times \R}$ can be written explicitly as
$$
\begin{pmatrix}
0 & D^{E^N_{0,\sL,+}} \\ D^{E^N_{0,\sL,+}} & 0
\end{pmatrix}
\otimes 1_{C^{\infty}(\R)} + 
 \gamma_{E^N_{0,\sL,+}} \otimes
\begin{pmatrix}
0 & i\frac{d}{dt} \\  i\frac{d}{dt}  & 0
\end{pmatrix}
+
1_{C^{\infty}(E^N_{0,\sL,+})}\otimes
\begin{pmatrix}
0 &  -i\chi \\ i\chi & 0
\end{pmatrix},
$$
where $\gamma_{E^N_{0,\sL,+}}$ is a grading on $E^N_{0,\sL,+}$ defined as $-i$ times Clifford multiplication by the unit normal vector field on $N$ pointing into $M_+$. The equality \eqref{eq final red} then follows from the fact that the kernel of $ i\frac{d}{dt} \pm i\chi$ in $C^{\infty}(\R)$ is one-dimensional. 
Combining \eqref{eq main red} and \eqref{eq final red} concludes the proof.
\end{proof}
\begin{remark}
\label{rem any s}
Theorem \ref{thm Callias} continues to hold if we replace $\sigma$ by $\sigma^s$ for any $s\in\R$. In this case, the connection $\nabla^{\sL}$ from \eqref{eq nabla L} would be replaced by $\nabla^{\sL,s}=d+is\eta$.
\end{remark}

\subsection{Proof of Theorem \ref{thm main2}}
\begin{proof}[Proof of Theorem \ref{thm main2}]
This proof is similar to, but more subtle than, that of \cite[Theorem 2.1]{GHM3}, as it involves an additional scaling argument along with the use of an appropriate partition of unity. Hence we will give the full details.

First note that by a suspension argument, we only need to consider odd-dimensional $M$. In this case, let $\cS$ be the spinor bundle over $M$, let $\slashed{\partial}$ be the spin-Dirac operator, and let $\cS_\sL=\cS\otimes\sL$ for a $\Gamma$-trivial line bundle $\sL$. For $s\in\R$, let $\nabla^{\sL,s}$ be the Hermitian connection on $\sL$ defined by the one-form $is\eta$. In the notation of subsection \ref{subsection Callias localisation}, take $E_{0,\sL}=\cS_\sL$ and $D_0^s$ be the Dirac operator associated to the connection $\nabla^\cS\otimes 1+1\otimes\nabla^{\sL,s}$. Let
$$D^s=\begin{pmatrix}0&D_0^s\\D_0^s&0\end{pmatrix}$$
act on sections of $\cS_\sL\oplus\cS_\sL$. We now construct a potential $\Phi$ that is admissible for $D^s$, for all $s$, in the sense of Definition \ref{def Callias}.

Let $H$ be as in the statement of the theorem. Then $M\setminus H=X\cup Y$ for disjoint open subsets $X$ and $Y$. Pick a cocompact subset $K$ of $M$ such that $H\subseteq K\subseteq\overline X$ and $\kappa > 0$ on $K$, and the distance from  $X \setminus K$ to $Y$ is positive. 
Pick a $\Gamma$-invariant function 
$\chi \in C^{\infty}(M)$ such that $\chi$ equals $1$ on $Y$ and $-1$ on $X \setminus K$. Let $\Phi_0$ be the endomorphism of $\cS_\sL$ given by pointwise multiplication by $\chi$. Now define $D^s$ and $\Phi$ according to \eqref{eq D D0} and \eqref{eq Phi Phi0} respectively. Define the endomorphism
$$\Phi=\begin{pmatrix}0&i\chi\\-i\chi&0\end{pmatrix}.$$
One finds that $D_0^s=\spartial+isc(\eta)$. Together with the fact that $[isc(\eta),i\chi]=0$, this implies that
$$\{D^s,\Phi\}=-i\begin{pmatrix}sc(d\chi)&0\\0&-sc(d\chi)\end{pmatrix}.$$
By construction, the estimate $\Phi^2\geq\norm{\{D^s,\Phi\}}+1$ holds pointwise on $M\setminus K$, hence $\Phi$ is admissible for $D^s$, for all $s\in\R$, in the sense of Definition \ref{def Callias}.

Next, in the notation of subsection \ref{subsection proj Callias}, take $M_-=K$. Then $N=\partial M_-$ is a disjoint union $N_-\cup H$ for a cocompact subset $N_-$ such that $f|_{N_-} = -1$. In this case, 
\begin{equation}
\label{eq eigenbundle spin}
E^{N}_+ = \cS_\sL|_H.
\end{equation}
By Theorem \ref{thm Callias} and the proof of Theorem \ref{thm main1} (see also Remark \ref{rem any s}), it suffices to show that 
$\Ind_{\Gamma,\sigma}(D^s + \Phi) = 0$
for all sufficiently small $s$. For convenience, let us write $B_{\lambda}^s=D^s+\lambda\Phi$ for $\lambda>0$. By a homotopy argument, $\Ind_{\Gamma,\sigma}(B^s)=\Ind_{\Gamma,\sigma}(B^s_\lambda)$ for any positive $\lambda$, so it suffices to show that
$$\Ind_{\Gamma,\sigma}(B^s_\lambda) = 0$$
for some $\lambda>0$ and all sufficiently small $s$. Note that the endomorphism $\lambda\Phi$ is still admissible for $D^s$, and \eqref{eq eigenbundle spin} continues to hold. Let $K'$ be an arbitrary cocompact neighbourhood of $K$. By construction,
\begin{equation}
\label{eq M minus K}
(B^s_\lambda)^2\geq\lambda^2	
\end{equation}
on $M\setminus K$. On the set $K'$, we can obtain an estimate as follows. Letting $\nabla^s$ be the connection used to define $D^s$, we have
\begin{align}
\label{eq expansion}
(B^s_\lambda)^2&=(D^s)^2+\{D^s,\lambda\Phi\}+\Phi^2\nonumber\\
&={\nabla^s}^*\nabla^s+\frac{\kappa}{4}+is\lambda c(\omega)+\{D^s,\lambda\Phi\}+\lambda^2\Phi^2\nonumber\\
&\geq\frac{\kappa}{4}+is\lambda c(\omega)+\{D^s,\lambda\Phi\}+\lambda^2\Phi^2.
\end{align}
Let $\kappa_0=\inf_{x\in K}\kappa(x)>0$ by cocompactness of $K$. Then
\begin{itemize}
\item on $K$: there exist $s_0,\lambda_0>0$ such for all $s<s_0$ and $\lambda<\lambda_0$, the endomorphism \eqref{eq expansion} is bounded below by $\frac{\kappa_0}{8}$;
\item on $K'\setminus K$: since $\kappa\geq 0$ and $\{D^s,\lambda\Phi\}=0$, the endomorphism \eqref{eq expansion} is bounded below by $is\lambda c(\omega)+\lambda^2\Phi^2.$ By cocompactness of $K'$, there exist $s_1,\lambda_1$ such that $is\lambda c(\omega)+\lambda^2\Phi^2\geq\frac{\kappa_0}{8}$ for all $s<s_1$ and $\lambda<\lambda_1$.
\end{itemize}
Combining this with \eqref{eq M minus K}, we see that the estimate
$$(B^s_\lambda)^2\geq\frac{\kappa_0}{8}$$
holds on both $K'$ and $M\setminus K$ for all $s<\inf\{s_0,s_1\}$ and $\lambda<\inf\{\lambda_0,\lambda_1,\frac{\kappa_0}{8}\}$.
To combine these these estimates, let $\phi_1,\phi_2$ smooth functions $M\to [0,1]$ such that
\begin{itemize}
\item $\{\phi_1^2,\phi_2^2\}$ is a partition of unity on $M$;
\item $\supp(\phi_1)\subseteq K'$ and $\supp(\phi_2)\subseteq M\setminus K$.
\end{itemize}
For any $\varepsilon>0$, we may take $K'$ to be sufficiently large so that $\norm{d\phi_1}_\infty,\norm{d\phi_2}_\infty<\varepsilon$. Since $B^s$ is a perturbation of $\spartial$ by an endomorphism,
\begin{equation}
\label{eq commutators}
[\phi_i,B^s]=[\phi_i,\spartial]=c(d\phi_i),
\end{equation}
for $i=1,2$, hence $\phi_i B_{\lambda}^s=B_{\lambda}^s\phi_i+[\phi_i,\spartial]$. For any $u\in L^2(\cS_\sL)$, one computes that
\begin{align*}
\langle\phi_i B_{\lambda}^s u,\phi_i B_{\lambda}^s u\rangle&=\langle[\phi_i,\spartial]u,\phi_i B_{\lambda}^s u\rangle+\langle B_{\lambda}^s\phi_i u,\phi_i B_{\lambda}^su\rangle\\
&=\langle[\phi_i,\spartial]u,\phi_i B_{\lambda}^s u\rangle+\langle\phi_i B_{\lambda}^s u,[\phi_i,\spartial]u\rangle\\
&\qquad\qquad+\langle[\spartial,\phi_i]u,[\phi_i,\spartial]u\rangle+\langle B_{\lambda}^s\phi_i u,B_{\lambda}^s\phi_i u\rangle\\
&\geq\norm{B_{\lambda}^s\phi_i u}^2-2\varepsilon\norm{B_{\lambda}^s u}\norm{u}-\varepsilon^2\norm{u}^2,
\end{align*}
where the norms and inner products are taken in $L^2(\cS_\sL)$ and we have used that $\norm{\phi_i}_\infty\leq 1$. It follows that
\begin{align*}
\langle (B_{\lambda}^s)^2 u, u\rangle&=\langle\phi_1^2 B_{\lambda}^s u,B_{\lambda}^s u\rangle+\langle\phi_2^2 B_{\lambda}^s u,B_{\lambda}^s u\rangle\\
&\geq 2(\norm{B_{\lambda}^s\phi_i u}^2-2\varepsilon\norm{B_{\lambda}^s u}\norm{u}-\varepsilon^2\norm{u}^2)\\
&\geq\big(\frac{\kappa_0}{4}-\varepsilon^2\big)\norm{u}^2-4\varepsilon\norm{B_{\lambda}^s u}\norm{u}.
\end{align*}
Hence $(\norm{B_{\lambda}^s u}+2\varepsilon\norm{u})^2\geq\big(\frac{\kappa_0}{4}+3\varepsilon^2\big)\norm{u}^2,$ so that
$$\norm{B_{\lambda}^s u}\geq\Big(\sqrt{\frac{\kappa_0}{4}+3\varepsilon^2}-2\varepsilon\Big)\norm{u}.$$
Taking $\varepsilon$ small enough, and hence $K'$ large enough, we see that $(B_{\lambda}^s)^2$ is strictly positive. Thus $B^s_\lambda$ is invertible for $\lambda<\inf\{\lambda_0,\lambda_1,\frac{\kappa_0}{8}\}$ and $s<\inf\{s_0,s_1\}$, whence $\Ind_{\Gamma,\sigma}(B^s_\lambda) = 0$.
%
%
\end{proof}
\hfill\vskip 0.3in

\section{A quantitative obstruction in the non-cocompact setting}
\label{sec quantitative obstr}

When $M/\Gamma$ is non-compact, we can use quantitative $K$-theory to give obstructions to the existence of $\Gamma$-invariant metrics of positive scalar curvature on $M$. This uses the fact that the twisted Roe algebra is naturally filtered by propagation, making it an example of a \emph{geometric $C^*$-algebra}. We now review these concepts.
\subsection{Geometric $C^*$-algebras and quantitative K-theory}
\label{subsec qkt}
\begin{definition}
\label{def geometric algebra}
A unital $C^*$-algebra $A$ is \emph{geometric} if it admits a filtration $\{A_r\}_{r>0}$ satisfying the following properties:
\begin{enumerate}[(i)]
\item $A_r\subseteq A_{r'}$ if $r\leq r'$;
\item $A_r A_{r'}\subseteq A_{r+r'}$;
\item $\bigcup_{r=0}^\infty A_r$ is dense in $A$.
\end{enumerate}
\end{definition}
If $A$ is non-unital, then its unitization $A^+$, viewed as $A\oplus\mathbb{C}$ as as a vector space, is a geometric $C^*$-algebra with filtration $\{A_r\oplus\mathbb{C}\}_{r>0}.$ In addition, for each $n$, the matrix algebra $M_n(A)$ is a geometric $C^*$-algebra with filtration $\{M_n(A_r)\}_{r>0}.$



\begin{definition}[{\cite[Definition 2.15]{Chung}}]
\label{def quasi}
Let $A$ be a geometric $C^*$-algebra. For $0<\varepsilon<\frac{1}{20}$, $r>0$, and $N\geq 1$,
\begin{itemize}
\item an element $e\in A$ is called an \emph{$(\varepsilon,r,N)$-quasiidempotent} if
$$\norm{e^2-e}<\varepsilon,\qquad e\in A_r,\qquad\max(\norm{e},\norm{1_{A^+}-e})\leq N;$$
\item if $A$ is unital, an element $u\in A$ is called an \emph{$(\varepsilon,r,N)$-quasiinvertible} if $u\in A_r$, $\norm{u}\leq N$, and there exists $v\in A_r$ with
$$\norm{v}\leq N,\qquad\max(\norm{uv-1},\norm{vu-1})<\varepsilon.$$
The pair $(u,v)$ is called an $(\varepsilon,r,N)$\emph{-quasiinverse pair}.
\end{itemize}
\end{definition}
The quantitative $K$-groups $K_0^{\varepsilon,r,N}(A)$ and $K_1^{\varepsilon,r,N}(A)$ are defined by collecting together all quasiidempotents and quasiinvertibles over all matrix algebras, quotienting by an equivalence relation, and taking the Gr\"othendieck completion.
\begin{definition}[{\cite[subsection 3.1]{Chung}}]
\label{def quantitative K}
Let $A$ be a unital geometric $C^*$-algebra. Let $r>0$, $0<\varepsilon<\frac{1}{20}$, and $N>0$. 
\begin{enumerate}[label=(\roman*), leftmargin=0.29in]
\item Denote by $\textnormal{Idem}^{\varepsilon,r,N}(A)$ the set of $(\varepsilon,r,N)$-quasiidempotents in $A$. For each positive  integer $n$, let
$$\textnormal{Idem}_n^{\varepsilon,r,N}(A)=\textnormal{Idem}^{\varepsilon,r,N}(M_n(A)).$$
We have inclusions $\textnormal{Idem}_n^{\varepsilon,r,N}(A)\hookrightarrow\textnormal{Idem}_{n+1}^{\varepsilon,r,N}(A)$ given by
$e\mapsto\begin{pmatrix}e&0\\0&0\end{pmatrix}.$ Set 
$$\textnormal{Idem}_\infty^{\varepsilon,r,N}(A)=\bigcup_{n=1}^\infty\textnormal{Idem}_n^{\varepsilon,r,N}(A).$$
Define an equivalence relation $\sim$ on $\textnormal{Idem}_\infty^{\varepsilon,r,N}(A)$ by $e\sim f$ if $e$ and $f$ are $(4\varepsilon,r,4N)$-homotopic in $M_\infty(A)$. Denote the equivalence class of an element $e\in\textnormal{Idem}_\infty^{\varepsilon,r,N}(A)$ by $[e]$. Define addition on $\textnormal{Idem}_\infty^{\varepsilon,r,N}(A)/\sim$ by
$$[e]+[f]=\begin{bmatrix}e&0\\0&f\end{bmatrix}.$$
With this operation, $\textnormal{Idem}_\infty^{\varepsilon,r,N}(A)/\sim$ is an abelian monoid with identity $[0]$. Let $K_0^{\varepsilon,r,N}(A)$ denote its Grothendieck completion.

\item Denote by $GL^{\varepsilon,r,N}(A)$ the set of $(\varepsilon,r,N)$-quasiinvertibles in $A$. For each positive integer $n$, let
$$GL_n^{\varepsilon,r,N}(A)=GL_n^{\varepsilon,r,N}(M_n(A)).$$
We have inclusions $GL_n^{\varepsilon,r,N}(A)\hookrightarrow GL_{n+1}^{\varepsilon,r,N}(A)$ given by
$u\mapsto\begin{pmatrix}u&0\\0&1\end{pmatrix}.$ Set 
$$GL_\infty^{\varepsilon,r,N}(A)=\bigcup_{n=1}^\infty GL_n^{\varepsilon,r,N}(A).$$
Define an equivalence relation $\sim$ on $GL_\infty^{\varepsilon,r,N}(A)$ by $e\sim f$ if $u$ and $v$ are $(4\varepsilon,2r,4N)$-homotopic in $M_\infty(A)$. Denote the equivalence class of an element $u\in GL_\infty^{\varepsilon,r,N}(A)$ by $[u]$. Define addition on $GL_\infty^{\varepsilon,r,N}(A)/\sim$ by
$$[u]+[v]=\begin{bmatrix}u&0\\0&v\end{bmatrix}.$$
With this operation, $GL_\infty^{\varepsilon,r,N}(A)/\sim$ is an abelian group $[1]$.
\end{enumerate}
\end{definition}
\begin{remark}
\label{rem nonunital}	
If $A$ is a non-unital geometric $C^*$-algebra, then we have a canonical $*$-homomorphism $\pi\colon A^+\to\mathbb{C}$. Using contractivity of $\pi$, we have homomorphisms
$$\pi_*\colon K_i^{\varepsilon,r,N}(A^+)\to K_i^{\varepsilon,r,N}(\mathbb{C}),$$
where $i=0$ or $1$. Define $K_i^{\varepsilon,r,N}(A)=\ker(\pi_*)$.
\end{remark}
The following result on quasiidempotents and quasiinvertibles is useful.
\begin{lemma}[{\cite[Lemma 3.4]{GXY3}}]
\label{lem htpy}
Let $A$ be a geometric $C^*$-algebra. If $e$ is an $(\varepsilon,r,N)$-idempotent in $A$, and $f\in A_r$ satisfies 
$$\norm{f}\leq N,\qquad\norm{e-f}<\frac{\varepsilon-\norm{e^2-e}}{2N+1},$$ 
then $f$ is a quasiidempotent that is $(\varepsilon,r,N)$-homotopic to $e$. In particular, if
$$\norm{f}<\frac{\varepsilon}{2N+1},$$
then the class of $f$ is zero in $K_0^{\varepsilon,r,N}(A)$.

Suppose that $A$ is unital and $(u,v)$ is an $(\varepsilon,r,N)$-quasiinverse pair in $A$. If $a\in A_r$ satisfies 
$$\norm{a}\leq N,\qquad\norm{u-a}<\frac{\varepsilon-\max(\norm{uv-1},\norm{vu-1})}{N},$$ 
then $a$ is a quasiinvertible that is $(\varepsilon,r,N)$-homotopic to $u$. In particular, if
$$\norm{1-a}<\frac{\varepsilon}{N},$$
then the class of $a$ is zero in $K_1^{\varepsilon,r,N}(A)$.
\end{lemma}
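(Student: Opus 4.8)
The plan is to prove both parts by a straight-line perturbation argument, using only that each filtration piece $A_r$ is a closed linear subspace of $A$ (so convex combinations of elements of $A_r$ again lie in $A_r$) together with submultiplicativity of the norm. Morally this is just the quantitative analogue of the classical fact that two sufficiently close idempotents, respectively invertibles, in a $C^*$-algebra are homotopic; the work is entirely in tracking constants.

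For part (i), I would set $\delta = f - e$, so that $\|\delta\| < (\varepsilon - \|e^2-e\|)/(2N+1)$. The first step is to check that $f$ is again a quasiidempotent of the required quality, by expanding $f^2 - f = (e^2 - e) + (e\delta + \delta e - \delta) + \delta^2$ and bounding the middle term by $(2\|e\|+1)\|\delta\| \le (2N+1)\|\delta\|$, using that $\|e\| \le N$ for an $(\varepsilon,r,N)$-quasiidempotent; the quadratic term satisfies $\|\delta^2\| \le \|\delta\|^2$ and is negligible, while $\|1 - f\| \le \|1-e\| + \|\delta\|$. The second step is to consider the path $h_t \coloneqq e + t\delta = (1-t)e + tf$, $t\in[0,1]$, which lies in $A_r$; the same expansion with $\delta$ replaced by $t\delta$ shows each $h_t$ satisfies the defining bounds uniformly in $t$, giving the homotopy from $e$ to $f$. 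The ``in particular'' statement then follows by taking $e = 0$, a genuine idempotent with $\|0^2-0\|=0$: any $f\in A_r$ with $\|f\| < \varepsilon/(2N+1)$ is homotopic to $0$, hence $[f] = 0$ in $K_0^{\varepsilon,r,N}(A)$.

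For part (ii), I would set $\delta = a - u$, so $\|\delta\| < (\varepsilon - \max(\|uv-1\|,\|vu-1\|))/N$, and keep the same approximate inverse $v$. From $av - 1 = (uv - 1) + \delta v$ and $\|v\| \le N$ one gets $\|av - 1\| \le \|uv-1\| + N\|\delta\| < \varepsilon$, and symmetrically $\|va - 1\| < \varepsilon$; together with $\|a\| \le N$ this exhibits $(a,v)$ as an $(\varepsilon,r,N)$-quasiinverse pair. Then the path $a_t \coloneqq u + t\delta \in A_r$ satisfies $\|a_t v - 1\| \le \|uv-1\| + tN\|\delta\| < \varepsilon$ and likewise for $v a_t$, uniformly in $t$, giving a homotopy from $u$ to $a$ through quasiinvertibles. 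Taking $u = v = 1$ shows any $a \in A_r$ with $\|1-a\| < \varepsilon/N$ is homotopic to $1$, hence $[a] = 0$ in $K_1^{\varepsilon,r,N}(A)$.

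The only real obstacle is the bookkeeping of constants: the quadratic error $\|\delta^2\|$ in part (i) and the slight excess $\|1 - h_t\| \le N + \|\delta\|$ mean the path is, strictly speaking, a homotopy through quasiidempotents with tolerances marginally above $(\varepsilon,r,N)$. Since $\|\delta\| < \varepsilon/(2N+1) < 1/(20(2N+1))$, these excesses are harmless and can be absorbed either directly into the definition of $(\varepsilon,r,N)$-homotopy or, if necessary, into the $(4\varepsilon, r, 4N)$ slack built into the equivalence relation of Definition \ref{def quantitative K}. No deeper input is required; the argument uses nothing beyond elementary norm estimates.
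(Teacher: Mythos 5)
Your argument is correct and is essentially the standard perturbation/linear-homotopy proof; note that the paper does not prove this lemma itself but quotes it from \cite[Lemma 3.4]{GXY3}, where the same argument appears. The quadratic term you flag as a loose end disappears entirely if you decompose $f^2-f=(e^2-e)+(f-e)f+e(f-e)-(f-e)$ and use the hypothesis $\|f\|\le N$ alongside $\|e\|\le N$: this gives exactly $\|f^2-f\|\le\|e^2-e\|+(2N+1)\|f-e\|<\varepsilon$, which is precisely where the constant $2N+1$ in the statement comes from, so no appeal to the $(4\varepsilon,r,4N)$ slack is needed for that bound (only the harmless excess in $\|1_{A^+}-h_t\|$ is absorbed by the slack in the equivalence relation, as you say).
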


There is a homomorphism of abelian groups
\begin{equation}
\label{eq kappa}	
\Psi\colon K_*^{\varepsilon,r,N}(A)\to K_*(A),
\end{equation}
preserving the $\Z_2$-grading, 
where $K_*^{\varepsilon,r,N}(A)$ (resp. $K_*(A)$) denotes the direct sum of the quantitative (resp. operator) $K_0$ and $K_1$-groups.

\vspace{0.1in}
\subsection{The quantitative higher index}
\label{subsec quantitative higher ind}
\hfill\vskip 0.05in
	\noindent Fix $0<\varepsilon<\tfrac{1}{20}$ and $N\geq 7$. For each $s\in\R$, let the algebras $\mathbb{C}[M;L^2(\cS_{\sL})]^{\Gamma,\sigma^s}$ and $C^*(M;L^2(\cS_\sL))^{\Gamma,\sigma^s}$ be as in Definition \ref{def twisted Roe}, with $\sigma$ and the projective representation $T$ replaced by $\sigma^s$ and $T^s$, as in Definition \ref{def proj action}. 
	
	For each $r>0$, define the subspace 
\begin{equation}
\label{eq filtration}
\mathbb{C}[M;L^2(\cS_\sL)]^{\Gamma,\sigma^s}_r\coloneqq\{T\in\mathbb{C}[M;L^2(\cS_\sL)]^{\Gamma,\sigma^s}\colon\textnormal{prop}(T)\leq r\}
\end{equation}
of $C^*(M;L^2(\cS_\sL))^{\Gamma,\sigma^s}$. Then with respect to the filtration 
$$\{\mathbb{C}[M;L^2(\cS_\sL)]^{\Gamma,\sigma^s}\}_{r>0},$$ 
$C^*(M;L^2(\cS_\sL))^{\Gamma,\sigma^s}$ is a geometric $C^*$-algebra in the sense of Definition \ref{def geometric algebra}. As in \cite{GXY3}, this structure allows us define a refinement of the higher index that takes values in the quantitative $K$-groups of $C^*(M;L^2(\cS_\sL))^{\Gamma,\sigma^s}$. The construction is similar to that in \cite[subsection 3.2.2]{GXY3}, so we will be brief.

The $(\Gamma,\sigma^s)$-invariant higher index $\Ind_{\Gamma,\sigma^s}(D)$ given in Definition \ref{def higher index} can be represented explicitly as follows. Let $\chi$ be a normalising function. If $\dim M$ is even, define the idempotent
\begin{equation}
\label{eq pchi}
p_{\chi}(D)=\begin{pmatrix}
			\left[(1-\chi(D)^2)^2\right]_{1,1} & \,\,\left[\chi(D)(1-\chi(D)^2)\right]_{1,2}\\[1ex]
			\left[\chi(D)(2-\chi(D)^2)(1-\chi(D)^2)\right]_{2,1} & \,\,\left[\chi(D)^2(2-\chi(D)^2)\right]_{2,2}
		\end{pmatrix},
\end{equation}
where the notation $[X]_{i,j}$ means the $(i,j)$-th entry of the matrix $X$. Then $\Ind_{\Gamma,\sigma}(D)$ is represented by the difference of idempotents
	\begin{equation}
	\label{eq Aj graded}
	A_\chi(D)=p_{\chi}(D)-\begin{pmatrix}0&0\\0&1\end{pmatrix}.
	\end{equation}
For $\dim M$ odd, $\Ind_{\Gamma,\sigma}(D)$ can be represented by the unitary
	\begin{equation}
	\label{eq Aj ungraded}
	A_\chi(D)=e^{\pi i(\chi+1)}(D).
	\end{equation}

\subsubsection*{Even-dimensional $M$}
\hfill\vskip 0.05in
	\noindent
	Choose a normalizing function $\chi$ such that 
	\begin{equation}
	\label{eq fourier r5}
	\supp\widehat{\chi}\subseteq\left[-\frac{r}{5},\frac{r}{5}\right].
	\end{equation}
	Let $A_{\chi}(D^s)=p_{\chi}(D^s)-\begin{psmallmatrix}0&0\\0&1\end{psmallmatrix}$ as in \eqref{eq Aj graded}. Then the \emph{$(\varepsilon,r,N)$-quantitative maximal higher index} of $D$ is the class
$$\Ind_{\Gamma,\sigma^s,L^2}^{\varepsilon,r,N}(D^s)=\left[p_{\chi}(D^s)\right]-\begin{bmatrix}0&0\\0&1\end{bmatrix}\in K_0^{\varepsilon,r,N}(C^*(M;L^2(\cS_\sL))^{\Gamma,\sigma^s}).$$

\subsubsection*{Odd-dimensional $M$}
\hfill\vskip 0.05in
	\noindent For each integer $n\geq 0$ define polynomials
	\begin{align}
	\label{eq gn}
	f_n(x)&=\sum_{k=0}^n\frac{(2\pi ix)^k}{k!},\qquad g_n(x)= f_n(x)-\left(\sum_{k=1}^n\frac{(2\pi i)^k}{k!}\right)x^2.	
	\end{align}
	One finds that as $n\to\infty$, the difference $e^{2\pi ix}-g_n(x)$
	converges uniformly to $0$ for $x$ in the interval $[-2,2]$. 
	Let $m=m(\varepsilon,N)$ be the smallest number such that
	\begin{align}
	\label{eq gm}
	 |g_m(x)g_m(-x)-1|<\varepsilon,\nonumber\\
	 |e^{2\pi i x}-g_m(x)|<1,
	\end{align}
	for all $x\in[-2,2]$. Pick a normalizing function $\chi$ satisfying
	\begin{equation}
	\label{eq fourier 1}
	\supp\widehat{\chi}\subseteq\left[-\frac{r}{\deg g_m},\frac{r}{\deg g_m}\right].
	\end{equation}
	Then the operator 
	$$S_\chi=\frac{\chi(D^s)+1}{2}$$ 
	has propagation at most $\frac{r}{\deg g_m}$ and spectrum contained in $[-\frac{1}{2},\frac{3}{2}]$.
	Then $g_m(S_\chi(D^s))$ is an $(\varepsilon,r,N)$-quasiinvertible in the unitisation of $C^*(M;L^2(\cS_\sL))^{\Gamma,\sigma^s}$. It was shown in \cite[subsection 3.2.2]{GXY3} that
	$$\Ind_{{\Gamma,\sigma^s}}(D^s)=[A_\chi(D^s)]=[g_m(S_\chi)]\in K_1(C^*(M;L^2(\cS_\sL))^{\Gamma,\sigma^s}).$$

The \emph{$(\varepsilon,r,N)$-quantitative higher index} of $D^s$ is the class
	$$\Ind_{{\Gamma,\sigma^s}}^{\varepsilon,r,N}(D^s)=[g_m(S_\chi)]\in K_1^{\varepsilon,r,N}(C^*(M;L^2(\cS_\sL))^{\Gamma,\sigma^s}).$$

\begin{remark}
First, although in the above constructions we needed to make a choice of $\chi$, the quantitative higher index obtained is independent of this choice. 	

Second, the $(\Gamma,\sigma)$-higher index of $D^s$ relates to its quantitative refinement by $\Ind_{\Gamma}(D^s)=\Psi(\Ind_{\Gamma}(D^s))$, where $\Psi$ is the homomorphism from \eqref{eq kappa}.
\end{remark}

\hfill\vskip 0.1in
\subsection{A quantitative obstruction}
\label{subsec quantitative obstr}
We now prove Theorem \ref{thm main3}. This uses the construction of the twisted higher index from subsection \ref{subsec twisted algebras} in terms of twisted Roe algebras, which are geometric $C^*$-algebras in the sense of \cite{Oyono2}. The result we obtain generalises \cite[Theorem 1.1]{GXY3}.
\begin{proof}
The technique of the proof is as in \cite[section 4]{GXY3}. The differences are that we now work with the reduced rather than the maximal version of the twisted Roe algebra, and that bounds on $\kappa$ used in that paper are now replaced by bounds on the endomorphism $\kappa+4isc(\omega)$. By Lemma \ref{lem Bochner}, we have
	$$(D^s)^2=\nabla^{s*}\nabla^s+\frac{\kappa}{4}+isc(\omega).$$
Suppose that
$\kappa+4isc(\omega)\geq C_s$ holds as an estimate on operators on $L^2(\cS_\sL)$. Let $\chi$ be a normalizing function whose distributional Fourier transform $\widehat{\chi}$ is supported on some finite interval $[-s,s]$ for $s>0$. 
For each $t>0$, let $\chi_t$ be the normalizing function defined by
\begin{equation}
\label{eq chit}
	\chi_t(u)=\chi(tu),
\end{equation}
	$u\in\mathbb{R}$. 
Let $A_{\chi}(D^s)$ be the index representative defined using $\chi$. 

If $M$ is even-dimensional, let
	\begin{equation*}
	A_{\chi}(u)\coloneqq
	\begin{pmatrix}
			(1-\chi(u)^2)^2 & \,\,\chi(t)(1-\chi(u)^2)\\[1ex]
			\chi(u)(2-\chi(u)^2)(1-\chi(u)^2) & \,\,\chi(u)^2(2-\chi(u)^2)-1
	\end{pmatrix},\quad u\in\mathbb{R}.
	\end{equation*}
Let $u_0>0$ and a function $\alpha$ be such that
	\begin{equation}
	\label{eq AChi small}
	\norm{A_{\chi}(u)}<\frac{\varepsilon}{2N+1}
	\end{equation}
	whenever $|1-\chi(u)^2|<\alpha(\varepsilon)$ for all $u$ such that $|u|>u_0$, where the norm of $A_{\chi}(u)$ is taken in $M_2(\mathbb{C})$.
	Note that for $N\geq 7$, \eqref{eq AChi small} also implies that $\norm{A_{\chi}^2(u)-A_{\chi}(u)}<\varepsilon$ if $|u|>u_0$. By \eqref{eq chit}, we have
	\begin{equation}
	\label{eq epsilonbeta}
	\Big|1-\chi_{\frac{2u_0}{\sqrt{C_s}}}(u)^2\Big|=\Big|1-\chi\left(\tfrac{2u_0 u}{\sqrt{C_s}}\right)^2\Big|<\alpha(\varepsilon)
	\end{equation}
	whenever $u\in\mathbb{R}\setminus(-\frac{\sqrt{C_s}}{2},\frac{\sqrt{C_s}}{2})$, while 
	$$\supp\Big(\widehat{\chi}_{\tfrac{2u_0}{\sqrt{C_s}}}(D^s)\Big)\subseteq\left[-\tfrac{2u_0}{\sqrt{C_s}}s,\tfrac{2u_0}{\sqrt{C_s}}s\right].$$
	It follows that $A_{\chi_{\frac{2u_0}{\sqrt{C_s}}}}(D^s)$ is an $(\varepsilon,\frac{10u_0}{\sqrt{C_s}}s,N)$-quasiidempotent in $2\times 2$-matrices over the unitisation of $C^*(M;L^2(\cS_\sL))^{\Gamma,\sigma^s}$ 
	with norm strictly less than $\frac{\varepsilon}{2N+1}$.
By Lemma \ref{lem htpy},
	\begin{align*}
	\Ind_{{\Gamma,\sigma^s,L^2}}^{\varepsilon,\frac{10u_0}{\sqrt{C_s}}s,N}(D^s)=
	0\in K_0^{\varepsilon,\frac{10u_0}{\sqrt{C_s}}s,N}(C^*(M;L^2(\cS_\sL))^{\Gamma,\sigma^s}).
	\end{align*}

Letting $\lambda_0=10u_0 s$, we obtain $\Ind_{\Gamma,\sigma^s,L^2}^{\varepsilon,\frac{\lambda_0}{\sqrt{C_s}},N}(D^s)=0$. Note that for any $r\geq\frac{\lambda_0}{\sqrt{C_s}}$, $\Ind_{\Gamma,\sigma^s,L^2}^{\varepsilon,r,N}(D^s)$ can also be represented by $A_{\chi_{\frac{2u_0}{\sqrt{C_s}}}}(D^s)$. The homomorphism
$$K_0^{\varepsilon,\frac{\lambda_0}{\sqrt{C_s}},N}(C^*(M;L^2(\cS_\sL))^{\Gamma,\sigma^s})\to K_0^{\varepsilon,r,N}(C^*(M;L^2(\cS_\sL))^{\Gamma,\sigma^s})$$
induced by the inclusion
$$\textnormal{Idem}_\infty^{\varepsilon,\frac{\lambda_0}{\sqrt{C_s}},N}(C^*(M;L^2(\cS_\sL))^{\Gamma,\sigma^s})\hookrightarrow \textnormal{Idem}_\infty^{\varepsilon,r,N}(C^*(M;L^2(\cS_\sL))^{\Gamma,\sigma^s})$$
takes $\Ind_{{\Gamma,\sigma^s,L^2}}^{\varepsilon,\frac{\lambda_0}{\sqrt{C_s}},N}(D^s)$ to $\Ind_{{\Gamma,\sigma^s,L^2}}^{\varepsilon,r,N}(D^s)$, which therefore vanishes.

If $M$ is odd-dimensional, let $m=m(\varepsilon,N)$ and the polynomial $g_m$ be as \eqref{eq gm}. Let $\chi$ be a normalizing function satisfying \eqref{eq fourier 1}, and let $s=\frac{r}{\deg g_m}$. Let $u_0>0$ be such that
	$$\norm{1-g_m(P_{\chi}(u))}<\frac{\varepsilon}{N}$$
	whenever $|1-\chi(u)^2|<\alpha(\varepsilon)$ holds for all $u$ such that $|u|>u_0$ or, equivalently, whenever
	\begin{equation}
	\label{eq epsilonbeta}
	\Big|1-\chi_{\frac{2u_0}{\sqrt{C_s}}}(u)^2\Big|=\Big|1-\chi\left(\tfrac{2u_0 u}{\sqrt{C_s}}\right)^2\Big|<\alpha(\varepsilon)
	\end{equation}
	for all $u\in\mathbb{R}\setminus(-\frac{\sqrt{C_s}}{2},\frac{\sqrt{C_s}}{2})$. Meanwhile,
	$$\supp\Big(\widehat{\chi}_{\tfrac{2u_0}{\sqrt{C_s}}}(D^s)\Big)\subseteq\left[-\tfrac{2u_0}{\sqrt{C_s}}s,\tfrac{2u_0}{\sqrt{C_s}}s\right].$$
	Thus $g_m(P_{\chi_{\frac{2u_0}{\sqrt{C_s}}}}(D^s))$ is an $(\varepsilon,\frac{2mu_0}{\sqrt{C_s}}s,N)$-quasiinvertible in $2\times 2$-matrices over the unitisation of $C^*(M;L^2(\cS_\sL))^{\Gamma,\sigma^s}$ satisfying	
\begin{equation}
\Big\|1-g_m(P_{\chi_{\frac{2u_0}{\sqrt{C_s}}}}(D^s))\Big\|<\frac{\varepsilon}{N}.
\end{equation}
By Lemma \ref{lem htpy},
	\begin{align*}
	\Ind_{{\Gamma,\sigma^s,L^2}}^{\varepsilon,\frac{2mu_0}{\sqrt{C_s}}s,N}(D^s) = 0\in K_1^{\varepsilon,\frac{2mu_0}{\sqrt{C_s}}s,N}(C^*(M;L^2(\cS_\sL))^{\Gamma,\sigma^s}).
	\end{align*}
Letting $\lambda_0=2mu_0 s$, we obtain $\Ind_{{\Gamma,\sigma^s},L^2}^{\varepsilon,\frac{\lambda_0}{\sqrt{C_s}},N}(D^s)=0$. For any $r\geq\frac{\lambda_0}{\sqrt{C_s}}$, the element $\Ind_{{\Gamma,\sigma^s,L^2}}^{\varepsilon,r,N}(D^s)$ can also be represented by $g_m(P_{\chi_{\frac{2u_0}{\sqrt{C_s}}}}(D^s))$. The homomorphism
$$K_1^{\varepsilon,\frac{\lambda_0}{\sqrt{c}},N}(C^*(M;L^2(\cS_\sL))^{\Gamma,\sigma^s})\to K_1^{\varepsilon,r,N}(C^*(M;L^2(\cS_\sL))^{\Gamma,\sigma^s})$$
induced by the inclusion
$$GL_\infty^{\varepsilon,\frac{\lambda_0}{\sqrt{C_s}},N}((C^*(M;L^2(\cS_\sL))^{\Gamma,\sigma^s})^+)\hookrightarrow GL_\infty^{\varepsilon,r,N}((C^*(M;L^2(\cS_\sL))^{\Gamma,\sigma^s})^+)$$
takes $\Ind_{{\Gamma,\sigma^s,L^2}}^{\varepsilon,\frac{\lambda_0}{\sqrt{C_s}},N}(D^s)$ to $\Ind_{{\Gamma,\sigma^s,L^2}}^{\varepsilon,r,N}(D^s)$, which therefore vanishes.
\end{proof}
\hfill\vskip 0.3in
\bibliographystyle{plain}
\bibliography{mybib}
\end{document}